
\raggedbottom

\documentclass[10pt,reqno]{amsart}

\usepackage[a4paper,left=35mm,right=35mm,top=30mm,bottom=30mm,marginpar=25mm]{geometry}

\usepackage{amsmath}
\usepackage{amssymb}
\usepackage{amsthm}
\usepackage[utf8]{inputenc}
\usepackage{eurosym}
\usepackage{graphicx}
\usepackage{hyperref}
\usepackage{dsfont}
\usepackage{dsfont}
\usepackage{xcolor,colortbl}
\usepackage{comment}
\usepackage{bm}

%\usepackage[displaymath,mathlines]{lineno}
%\linenumbers
%\pagewiselinenumbers
%\modulolinenumbers[1]

%permite que formulas
%se dividam por duas
%paginas diferentes
\allowdisplaybreaks

\usepackage{hyperref}
%[hypertex]

%\usepackage{showkeys}
%\usepackage{refcheck}

\usepackage{ifthen}

\makeindex

\newcommand{\argmin}{\operatorname{argmin}}

\newcommand{\Rr}{{\mathbb{R}}}

\newcommand{\Ee}{{\mathds{E}}}%notice it is not mathbb

\newcommand{\Pp}{{\mathcal{P}}}

\newcommand{\bx}{{\bf x}}
\newcommand{\bepsilon}{{\bm{ \epsilon}}}

\newcommand{\bX}{{\bf X}}

\newcommand{\bP}{{\bf P}}

\newcommand{\bv}{{\bf v}}

\def\dx{{\rm d}x}

\def\leq{\leqslant}
\def\geq{\geqslant}

\numberwithin{equation}{section}

\newtheoremstyle{thmlemcorr}{10pt}{10pt}{\itshape}{}{\bfseries}{.}{10pt}{{\thmname{#1}\thmnumber{
#2}\thmnote{ (#3)}}}
\newtheoremstyle{thmlemcorr*}{10pt}{10pt}{\itshape}{}{\bfseries}{.}\newline{{\thmname{#1}\thmnumber{
#2}\thmnote{ (#3)}}}
\newtheoremstyle{defi}{10pt}{10pt}{\itshape}{}{\bfseries}{.}{10pt}{{\thmname{#1}\thmnumber{
#2}\thmnote{ (#3)}}}
\newtheoremstyle{remexample}{10pt}{10pt}{}{}{\bfseries}{.}{10pt}{{\thmname{#1}\thmnumber{
#2}\thmnote{ (#3)}}}
\newtheoremstyle{ass}{10pt}{10pt}{}{}{\bfseries}{.}{10pt}{{\thmname{#1}\thmnumber{
A#2}\thmnote{ (#3)}}}

\theoremstyle{thmlemcorr}
\newtheorem{theorem}{Theorem}
\numberwithin{theorem}{section}
\newtheorem{lemma}[theorem]{Lemma}

\newtheorem{proposition}[theorem]{Proposition}

\theoremstyle{thmlemcorr*}
\newtheorem{theorem*}{Theorem}
\newtheorem{lemma*}[theorem]{Lemma}
\newtheorem{corollary*}[theorem]{Corollary}
\newtheorem{proposition*}[theorem]{Proposition}
\newtheorem{problem*}[theorem]{Problem}
\newtheorem{conjecture*}[theorem]{Conjecture}

\theoremstyle{defi}

\newtheorem{hyp}{Assumption}

\newtheorem{problem}{Problem}

\theoremstyle{remexample}
\newtheorem{remark}[theorem]{Remark}

\theoremstyle{ass}

\usepackage{tikz}
\usetikzlibrary{arrows}
\usetikzlibrary{positioning}
\usepackage{bm}
\usepackage{caption}
\usepackage{subcaption}
\usepackage[linesnumbered,ruled]{algorithm2e}

\begin{document}

\title{Machine Learning architectures for price formation models}

\author{Diogo Gomes}\thanks{King Abdullah University of Science and Technology (KAUST), CEMSE Division, Thuwal 23955-6900.
Saudi Arabia.
e-mail: diogo.gomes@kaust.edu.sa.}
\author{Julian Gutierrez}\thanks{King Abdullah University of Science and Technology (KAUST), CEMSE Division, Thuwal 23955-6900.
Saudi Arabia.
e-mail: julian.gutierrezpineda@kaust.edu.sa.}
\author{Mathieu Lauri\`{e}re}\thanks{New York University Shanghai (NYU Shanghai), Shanghai 200122.
China.
e-mail: ml5197@nyu.edu.}

%\affil[1]{King Abdullah University of Science and Technology - KAUST}%
%\affil[2]{Affiliation not available}%
%\affil[3]{Kwame Nkrumah University of Science and Technology}%

%\author{Diogo A.
%  Gomes}
%\address[D.~A.~Gomes]{
%        King Abdullah University of Science and Technology (KAUST), CEMSE Division, Thuwal 23955-6900.
% Saudi Arabia, and  
%        KAUST SRI, Center for Uncertainty Quantification in Computational Science and Engineering.}
%\email{diogo.gomes@kaust.edu.sa}
%\author{John Doe}
%\address[J.~Doe]{
%        The University Address}
%\email{doe@doe.tv}

\keywords{Mean Field Games; Price formation; Neural networks}
%\subjclass[2010]{
%        35J47, %Second order elliptic systems
%        35A01} %Existence problems: global existence, local existence, non-existence

\thanks{
      The authors were partially supported by King Abdullah University of Science and Technology (KAUST) baseline funds and KAUST OSR-CRG2021-4674.
}
\date{\today}

%--------------ABSTRACT--------------------
%--------------ABSTRACT--------------------
%--------------ABSTRACT--------------------
%--------------ABSTRACT--------------------
%--------------ABSTRACT--------------------
\begin{abstract}
Here, we study machine learning (ML) architectures to solve a mean-field games (MFGs) system arising in price formation models. We formulate a training process that relies on a min-max characterization of the optimal control and price variables. Our main theoretical contribution is the development of a posteriori estimates as a tool to evaluate the convergence of the training process. We illustrate our results with numerical experiments for linear dynamics and both quadratic and non-quadratic models.
\end{abstract}

\maketitle

%--------------INTRODUCTION--------------------
%--------------INTRODUCTION--------------------
%--------------INTRODUCTION--------------------
%--------------INTRODUCTION--------------------
%--------------INTRODUCTION--------------------
%--------------INTRODUCTION--------------------
\section{introduction}

Here, we consider machine learning (ML) methods to numerically solve the mean-field games (MFGs) price formation model introduced in \cite{gomes2018mean}. This model describes the price $\varpi$ of a commodity with deterministic supply $Q$. This commodity is traded in a population of rational agents under a market-clearing condition on a finite time horizon $T>0$. The price formation problem is reduced to the following MFGs system.

\begin{problem} \label{problem:MFGs price problem GS} 
Given $m_0\in \mathcal{P}(\Rr)$, $H:\Rr^2\to \Rr$ differentiable in the second argument, $u_T:\Rr\to\Rr$, and $Q\in C^\infty([0,T])$, find $u,m: [0,T]\times \Rr \to \Rr$ and $\varpi:[0,T]\to \Rr$ satisfying  $m\geq 0$ and 
\begin{equation}\label{eq:MFG system}
\begin{cases}
-u_t +H(x,\varpi + u_x )=0 & [0,T]\times \Rr,
\\
u(T,x)=u_T(x) & x \in \Rr,
\\
m_t - \left(H_p(x,\varpi + u_x)m\right)_x =0 & [0,T]\times \Rr,
\\
m(0,x)=m_0(x) & x\in \Rr,
\\
-\int_{\Rr} H_p(x,\varpi + u_x)m \dx = Q(t) & t \in [0,T].
\end{cases}
\end{equation}
\end{problem}  
In the previous problem, the Hamiltonian $H$ is the Legendre transform of a Lagrangian $L$; that is,
\begin{equation}\label{eq:Legendre transform}
H(x,p)=\sup_{v\in\Rr} \left\{-p v - L(x,v)\right\}, \quad (x,p) \in \Rr^2,
\end{equation}
where $v\mapsto L(x,v)$ is convex for all $x\in \Rr$. The existence and uniqueness of solutions for \eqref{eq:MFG system} were obtained in \cite{gomes2018mean} under convexity assumptions for $u_T$ and $L$, and some further technical assumptions on $H$. In their framework, authors required $C^\infty([0,T])$ regularity of $Q$, however $C^1([0,T])$ regularity is enough for their fixed-point approach (see equation (20) in \cite{gomes2018mean}). The first equation in \eqref{eq:MFG system} is solved in the viscosity sense, and the second equation is solved in the distributional sense. Moreover, $u$ is Lipschitz continuous in $x$, and $\varpi$ is continuous. Furthermore, the linear-quadratic model admits semi-explicit solutions, as presented in \cite{gomes2021randomsupply} for the game with a finite population and random supply. 

Here, we present a method to approximate solutions to this price formation problem using ML tools. For that, we formulate \eqref{eq:MFG system} as a constrained minimization problem. Then, we follow an update rule analogous to the dual-ascent method in constrained optimization to optimize the ML parameters. We can verify the convergence of our method using a posteriori estimates, which are based on the Euler-Lagrange equation that characterizes our minimization problem. The optimal control problem described by \eqref{eq:MFG system} is briefly recalled in Section \ref{sec:The mfg price problem}, where we present a variational problem that motivates our numerical method. In Section \ref{sec:Assumptions}, we state the main assumptions for the MFGs price formation model. In addition to developing ML frameworks for price formation models, our main theoretical contribution is an a posteriori estimate that controls the difference between the solution of Problem \ref{problem:MFGs price problem GS} and an approximate solution to an Euler-Lagrange equation. These a posteriori estimates do not require the exact solution to be known, whose existence is guaranteed by the results in \cite{gomes2018mean}. Our result reads as follows:

\begin{theorem}\label{Prop:Main result}
Suppose that Assumptions \ref{hyp:L uniformly convex}, \ref{hyp:uT convex DS}, \ref{hyp:L separability DS}, \ref{hyp:Lipschitz}, and \ref{hyp:Lipschitz 2} hold. 
Let $(\bX,\bP)$ and $\varpi^N$ solve the $N$-player price formation problem (see \eqref{eq:HSystem N agent}), and let $(\tilde{\bX},\tilde{\bP})$ and $\tilde{\varpi}^N$ be an approximate solution to the $N$-player problem (see \eqref{eq:HSystem N agents penalized}) up to the error terms $\bepsilon:[0,T]\to \Rr^N$, $\bepsilon_T \in \Rr^N$, and $\epsilon_q:[0,T]\to\Rr$. Then, there exists $C(T,H,u_T,N)>0$ such that 
\begin{align}
\label{eq:EL discrete a posteriori}
	\| \varpi^N - \tilde{\varpi}^N \|_{L^2([0,T])}^2  &  \leq C(T,H,u_T,N) \bigg( \|\bepsilon\|_{L^2([0,T])}^2 + |\bepsilon_T|^2 + \|\epsilon_q \|_{L^2([0,T])}^2 \bigg).
\end{align}
\end{theorem}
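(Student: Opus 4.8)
The plan is to exploit the convex--concave variational structure behind the $N$-player price formation problem. For trajectories $\bx=(x^1,\dots,x^N)$ with prescribed initial positions and a price profile $p(\cdot)$, introduce
\[
\mathcal{F}[\bx,p]=\sum_{i=1}^N\Big(\int_0^T L(x^i,\dot x^i)\,dt+u_T(x^i(T))\Big)+\int_0^T p(t)\Big(\sum_{i=1}^N\dot x^i(t)-NQ(t)\Big)\,dt .
\]
By Assumption~\ref{hyp:L uniformly convex}, the map $\bx\mapsto\mathcal{F}[\bx,p]$ is uniformly convex in the velocities, say with modulus $\lambda>0$, and it is \emph{affine} in $p$. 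The exact solution has $\bX$ minimizing $\mathcal{F}[\cdot,\varpi^N]$ (this is precisely the optimality system \eqref{eq:HSystem N agent}), while the approximate solution has $\tilde{\bX}$ a critical point of $\mathcal{F}[\cdot,\tilde\varpi^N]$ up to the defect $(\bepsilon,\bepsilon_T)$ recorded in \eqref{eq:HSystem N agents penalized} (I read $\bepsilon$ as the residuals of the adjoint equations and $\bepsilon_T$ as those of the terminal conditions, the state equations and initial conditions being exact, so that $\epsilon_q$ carries the market-clearing defect). Equivalently, and this is how I would actually carry out the computation, one differentiates $t\mapsto\sum_i(P^i-\tilde P^i)(X^i-\tilde X^i)$, inserts the exact and the perturbed Hamiltonian systems, and integrates by parts in $t$.

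I would then add the two Bregman inequalities. Since $\bX$ minimizes $\mathcal{F}[\cdot,\varpi^N]$, $\mathcal{F}[\tilde{\bX},\varpi^N]-\mathcal{F}[\bX,\varpi^N]\geq\tfrac\lambda2\|\dot{\bX}-\dot{\tilde{\bX}}\|_{L^2}^2$; since $\tilde{\bX}$ is a critical point of $\mathcal{F}[\cdot,\tilde\varpi^N]$ up to $(\bepsilon,\bepsilon_T)$, $\mathcal{F}[\bX,\tilde\varpi^N]-\mathcal{F}[\tilde{\bX},\tilde\varpi^N]\geq\tfrac\lambda2\|\dot{\bX}-\dot{\tilde{\bX}}\|_{L^2}^2+\langle(\bepsilon,\bepsilon_T),\bX-\tilde{\bX}\rangle$, where Assumption~\ref{hyp:uT convex DS} lets me drop a nonnegative terminal Bregman term and $\langle(\bepsilon,\bepsilon_T),\bX-\tilde{\bX}\rangle$ abbreviates $\sum_i\int_0^T\epsilon^i(X^i-\tilde X^i)\,dt$ plus a terminal term built from $\bepsilon_T$. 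Adding the two, the affinity of $\mathcal{F}$ in $p$, combined with the defect identity $\sum_i\dot{\tilde X}^i-NQ=N\epsilon_q$ and the exact relation $\sum_i\dot X^i-NQ=0$, forces the four function values to collapse to $N\int_0^T(\varpi^N-\tilde\varpi^N)\epsilon_q\,dt$. Using $|X^i(t)-\tilde X^i(t)|\leq\sqrt t\,\|\dot X^i-\dot{\tilde X}^i\|_{L^2}$, then Cauchy--Schwarz and Young's inequality to absorb the $(\bepsilon,\bepsilon_T)$ pairing, I arrive at the energy inequality
\[
\tfrac\lambda2\,\|\dot{\bX}-\dot{\tilde{\bX}}\|_{L^2}^2\;\leq\;N\,\|\varpi^N-\tilde\varpi^N\|_{L^2}\,\|\epsilon_q\|_{L^2}\;+\;C(T,H,u_T)\big(\|\bepsilon\|_{L^2}^2+|\bepsilon_T|^2\big).
\]

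The remaining step --- which I expect to be the real obstacle --- is to turn this into a bound on $\|\varpi^N-\tilde\varpi^N\|_{L^2}$ itself, because $\mathcal{F}$ is only affine, not strictly concave, in the price, so the coercivity above only controls the velocities. Here Assumption~\ref{hyp:L separability DS} is essential: separability forces $H_x$ to be independent of $p$, so the adjoint equations $\dot P^i=H_x(X^i,\cdot)$ and $\dot{\tilde P}^i=H_x(\tilde X^i,\cdot)+\epsilon^i$ do not involve the price at all. Hence $R^i:=P^i-\tilde P^i$ solves a backward ODE whose right-hand side is controlled by $|X^i-\tilde X^i|$, $|\epsilon^i|$, and, at $t=T$, $|\epsilon_T^i|$, so Assumptions~\ref{hyp:Lipschitz}--\ref{hyp:Lipschitz 2} give $\|R^i\|_{L^\infty}\leq C(T,H,u_T)\big(\|\dot X^i-\dot{\tilde X}^i\|_{L^2}+|\epsilon_T^i|+\|\epsilon^i\|_{L^2}\big)$. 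On the other hand, each agent's first-order condition recovers the price pointwise, $\varpi^N=-L_v(X^i,\dot X^i)-P^i$ and $\tilde\varpi^N=-L_v(\tilde X^i,\dot{\tilde X}^i)-\tilde P^i$, so averaging over $i$ and using the Lipschitz continuity of $v\mapsto L_v(x,v)$ yields $\|\varpi^N-\tilde\varpi^N\|_{L^2}^2\leq\tfrac{C}{N}\|\dot{\bX}-\dot{\tilde{\bX}}\|_{L^2}^2+\tfrac{C}{N}\big(|\bepsilon_T|^2+\|\bepsilon\|_{L^2}^2\big)$. Inserting this into the energy inequality and applying Young's inequality once more absorbs every $\|\dot{\bX}-\dot{\tilde{\bX}}\|_{L^2}$ term, giving $\|\dot{\bX}-\dot{\tilde{\bX}}\|_{L^2}^2\leq C(T,H,u_T,N)\big(\|\bepsilon\|_{L^2}^2+|\bepsilon_T|^2+\|\epsilon_q\|_{L^2}^2\big)$; substituting back into the price-recovery bound yields \eqref{eq:EL discrete a posteriori}. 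The points needing care are the (polynomial) growth of the constants in $N$, the bookkeeping of the Lipschitz and convexity constants hidden in $C(T,H,u_T,N)$, and the convention that the state equations and initial conditions in \eqref{eq:HSystem N agents penalized} are satisfied exactly.
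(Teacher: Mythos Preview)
Your approach is correct and mirrors the paper's, recast in Lagrangian/Bregman language rather than the Hamiltonian form the paper uses. Your energy inequality is the dual of Proposition~\ref{Prop:PricevsELError} (which on the Hamiltonian side obtains coercivity directly in $\|\bP+\mathds{1}\varpi^N-(\tilde{\bP}+\mathds{1}\tilde\varpi^N)\|_{L^2}^2+\|\bX-\tilde{\bX}\|_{L^2}^2$ via uniform convexity of $H$ in $p$ and strong concavity in $x$), your bound on $R^i=P^i-\tilde P^i$ is exactly Lemma~\ref{lem:Ps bound}, and your price-recovery identity $\varpi^N-\tilde\varpi^N=-(L_v(\dot X^i)-L_v(\dot{\tilde X}^i))-(P^i-\tilde P^i)$ is, once rewritten on the Hamiltonian side, the triangle inequality $\tfrac12\|\varpi^N-\tilde\varpi^N\|^2\leq\|(P^i+\varpi^N)-(\tilde P^i+\tilde\varpi^N)\|^2+\|P^i-\tilde P^i\|^2$ that the paper uses in the final step. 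The only structural difference is ordering: the paper closes the $\epsilon_q$ coupling \emph{inside} Proposition~\ref{Prop:PricevsELError}, by writing $N(\varpi^N-\tilde\varpi^N)=\sum_i[(P^i+\varpi^N)-(\tilde P^i+\tilde\varpi^N)]-\sum_i(P^i-\tilde P^i)$ and invoking Lemma~\ref{lem:Ps bound} there to obtain a closed estimate, whereas you leave $\|\varpi^N-\tilde\varpi^N\|$ on the right-hand side of the energy inequality and close the loop afterwards via Young; both orderings work.

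One bookkeeping caveat worth flagging: the Lipschitz continuity of $v\mapsto L_v(x,v)$ you invoke is, under separability, equivalent to uniform convexity of $H$ in $p$, which the paper also asserts and uses in Proposition~\ref{Prop:PricevsELError}. Note, however, that Assumption~\ref{hyp:L uniformly convex} by itself yields only the \emph{opposite} inequality ($\ell''\geq\beta$, hence $\mathcal{H}''\leq 1/\beta$, i.e.\ $H_p$ Lipschitz), so this step relies on more than is explicitly listed and you should be explicit about where the bound $\ell''\leq C$ comes from.
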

In the above result, to be specific, $C(T,H,u_T,N)$ depends on the Lipschitz constants of $u'_T$ and of the partial derivatives of $H$. This result is proved in Section \ref{sec:A posteriori estimates} and provides a convergence criterion for any numerical approximation of the solution to Problem \ref{problem:MFGs price problem GS}. In particular, we show that for a sequence $\tilde{\varpi}^k$ and $\tilde{\bX}^k$, $k=1,\ldots$, obtained by ML techniques, such that $\bepsilon^k$, $\bepsilon_T^k$, and $\epsilon_q$ converge to zero, our approximations converge to the solution of the price formation problem.

The result in Theorem \ref{Prop:Main result} is in line with the study of stability of solutions w.r.t. perturbations in optimality conditions. This is a central property studied in the optimal control theory under the concept of metric regularity (see \cite{MetricRegBook}, Chapter 3) of solution maps. For instance, \cite{Malanowski2} studied constrained optimization problems depending on a parameter. They considered the associated Lagrange multiplier and Kuhn--Tucker condition and used a perturbed linear-quadratic problem to approximate the original optimization problem. They proved that neighborhoods of the parameter and the optimal pair (the minimizer and its associated multiplier) exist such that any parameter perturbation has an associated Kuhn-Tucker point whose distance from the optimal pair can be estimated using the parameter perturbation. Moreover, when sufficient optimality conditions hold, the associated Kuhn--Tucker point is the solution to the perturbed optimization problem. In \cite{Malanowski1}, using the results from \cite{Malanowski2}, authors studied control and state-constrained non-linear optimization problems with parameter dependence. They obtained Lipschitz continuity and directionally differentiability of the solutions w.r.t. the parameters. A linear-quadratic optimal control problem characterizes the directional derivatives. \cite{Quin2} introduced the concept of bi-metric regularity to study the effect of perturbations on solutions of linear optimal control problems. They obtained a H\"{o}lder-type stability result of an inclusion problem equivalent to the Pontryagin principle of the original optimization problem. In their study, a perturbation of the Pontryagin principle corresponded to the optimality condition of a perturbed optimization problem. \cite{Quin1} relied on the concept of bi-metric regularity introduced in \cite{Quin2} to study the stability of optimal control problems affine in the control. Using the Pontryagin principle as a generalized inclusion, they derived a partly linearized inclusion corresponding to the Pontryagin system of an auxiliary optimal control problem.  

Price formation has been studied in several contexts and with different techniques. In \cite{BS10} and \cite{BS02}, Stackelberg games to maximize the producer's revenue were used. A Cournot model was introduced in \cite{TBD20}, which specifies the price dynamics with noise from a Brownian motion and a jump process. A model with finitely many agents was presented in \cite{aid2020equilibrium}, where the demand includes common noise. The MFG approach for price formation has been adopted in several works. \cite{ATM19} obtains the spot price of an energy market as a function of the demand and trading rates. This model was extended in \cite{alasseur2021mfg} to include penalties at random times and random jumps. A major player in the market was studied in \cite{FTT20}. Using a linear-quadratic structure, the authors obtained a market price depending on the average position of the agents and the position of the major agent. The successive work \cite{feron2021price} proposed a MFG  model with common noise, with numerical results for the EPEX intraday electricity market. A MFG of optimal stopping was introduced in \cite{AidDumitrescuTankov2021} to model the transition from traditional to renewable means of energy production. \cite{JSF20} studied price formation for Solar Renewable Energy Certificates using a balance condition. McKean-Vlasov type equations were used to characterize price equilibrium of homogeneous markets in \cite{FT20}, and with a major player in \cite{fujii2021equilibrium}. The imbalance function approach we consider borrows ideas from  \cite{Gomes20164693}, where price formation was studied in a market of optimal investment and consumption. They obtained a MFG model coupled with the price variable, determined by a balance condition guaranteeing that all investments remain in the market. They provided numerical results for a linear model using a fixed-point iteration method. Common noise in the supply was considered in \cite{GoGuRi2021} for a MFG with a continuum of players, and in \cite{gomes2021randomsupply} for a market with finitely many agents.

Numerical methods for mean-field games were first introduced in \cite{DY} using finite-differences schemes and Newton-based methods (see \cite{achdou2021mean}, Chapter 4, and \cite{MFGNumerical} and \cite{LauriereEtAlSurvey} for a recent survey). Fourier series approximations were proposed in \cite{Yang2021}. Other methods include semi-Lagrangian schemes \cite{CarliniSilva2014}, fictitious play \cite{HADIKHANLOO2019369}, pseudo-spectral elements \cite{spectral}, and variational methods \cite{BenamouBrenierVarMFG}, \cite{Proximal2018}. However, the price formation MFG system \eqref{eq:MFG system} does not fit the above mentioned schemes because of the balance constraint. A novel approach to the numerical solution of \eqref{eq:MFG system} was proposed in \cite{PotentialPrice} using a reduction technique to obtain an equivalent variational problem. A semi-Lagrangian scheme is under study by our collaborators.

ML techniques have been used in several contexts. Early attempts to approximate differential equations solutions using neural networks (NN) can be traced to \cite{LagarisNNODE}, where the loss function is the differential equation. For optimal control problems, \cite{EffatiNNOC} trained a NN to fit the corresponding Pontryagin maximum principle equations. Since then, several variations have been proposed. For instance, regarding the architecture, the Deep Ritz method \cite{DeepRitz} introduced an architecture that adds residual connections between layers of the NN. This method outperforms the standard finite difference method for the Poisson equation in two dimensions. \cite{han2016deep} combines NN outputs with Monte-Carlo sampling to solve an energy allocation problem, for which the dynamic programming principle is expensive since the problem is high-dimensional. A supervised learning approach was adopted by \cite{SanchezSupervised} to train a NN model for landing problems against data of optimal trajectories. More recently, automatic differentiation of NM was used in \cite{bensoussan2021valuegradient} to solve optimal control problems. They minimized a loss functional that depends on the gradient of the value function, which was obtained by automatic differentiation of the NN representing the value function. \cite{HanHu2021} presented a numerical study using NN to solve stochastic optimal control problems with delay. The authors parametrized the controls using feed-forward and recurrent neural networks and a loss function corresponding to the discrete cost criteria.

For MFGs and ML, \cite{ruthotto2020machine} and \cite{Line2024713118} provided a ML approach to solve potential MFGs and mean-field control problems. They used a loss function that penalizes deviations from the Hamilton-Jacobi equation, which, for potential games, completely characterizes the model. A min-max problem was used in \cite{campbell2021deep}  to motivate a training algorithm for NN to approximate the solution of a principal-agent MFGs arising in renewable energy certificates models. \cite{cao2021connecting} studied the connection between generative adversarial networks, mean-field games, and optimal transport. The case of common-noise effects in mean-field equilibrium was investigated by \cite{min2021signatured} using rough path theory and deep learning techniques. The analysis of convergence of machine learning algorithms to solve mean-field games and control problems was presented in \cite{carmona2021convergenceergo} for ergodic problems and in \cite{carmona2021convergence} for finite horizon problems. Mean-field control with delay has been solved using recurrent neural networks in \cite{fouque2020deep}. The interested reader is referred to e.g. \cite{carmonalauriere2021handbookdeep}, for a survey of deep learning methods applied to MFGs and mean-field control problems. However, the problem in \eqref{eq:MFG system} is quite distinct from the previous ones. One PDE has a terminal condition, whereas the other is subject to an initial condition. Moreover, the coupling between them is given by an integral constraint.

The success of ML techniques applied to differential equations and optimal control problems depends on two aspects. First, the selection of a ML architecture implies the choice of a class of functions. This class relies on hyper-parameters, such as the activation function, the number of layers, and the number of neurons per layer. The convergence properties of this class of functions, when implemented to approximate solutions of differential equations and optimal control problems, remain a challenging issue. In this direction, the works \cite{arora2018understanding} and \cite{XuFiniteNN} present convergence results for NN with the rectified linear unit activation function when approximating piece-wise linear functions and functions in $L^q(\Rr)$, and  they provide bounds for the selection of the NN hyper-parameters. 
We introduce the ML framework for the price model in Section \ref{sec:NN for price}, where we consider two different architectures to approximate the solution of problem \eqref{eq:MFG system} by a NN. First, we consider the feed-forward NN, which is a standard tool in ML due to its simple structure. This architecture passes information among time steps using the output variable. The second is a recurrent architecture, which allows passing information among time steps using the output and a hidden state variable. This architecture is a standard tool in natural language processing. Our selection is guided by further extensions of our method to price formation models with common noise, which requires progressive measurability.

The second aspect when implementing ML techniques is the training algorithm used to optimize the NN parameters. In contrast to traditional optimization techniques, the ML approach constraints a best approximation to the class of functions defined by the selected architecture. Thus, finding the best approximation in this class depends on the convergence properties of the training algorithm. For instance, as presented in \cite{chen2018training}, the standard generative adversarial networks (GANs) training procedure is related to the sub-gradient method applied to a convex optimization problem. Thus, the observed convergence of GANs training is explained by analytical results in convex analysis. In the same spirit, the training algorithm we present in Section \ref{sec:training algorithm} can be regarded as an adversarial-like training between minimizing a cost functional and maximizing the penalization for deviations from a balance condition. Following this approach, we update the NN parameters the same way as primal and dual variables are updated in the dual ascent method for convex optimization with constraints. In particular, instead of updating the primal variable by the exact solution of a minimization problem, we adopt a variation borrowed from the Arrow-Hurwicz-Uzawa algorithm, which updates the primal variable by moving in the descent direction of the gradient. While global convergence results for training algorithms are difficult, we develop an alternative approach. We rely on our main result, the a posteriori estimate, to evaluate the convergence of the NN approximation to the solution of the price formation problem. 

We conclude our presentation with numerical results in Section \ref{sec: Numerical results}, where we implement our method in the linear-quadratic model. The results show that both architectures introduced in Section \ref{sec:NN for price} provide an accurate approximation of the solution to the price formation model.

%--------------SECTION ?--------------------
%--------------SECTION ?--------------------
%--------------SECTION ?--------------------
%--------------SECTION ?--------------------
%--------------SECTION ?--------------------

\section{The mfg price problem}
\label{sec:The mfg price problem}

Here, we briefly recall the optimization problem that leads to the formulation of Problem \ref{problem:MFGs price problem GS}, and we relate this problem to a game with a finite number of players. We use the relation between the discrete and the continuous models to formulate the variational problem whose numerical solution, using ML techniques, approximates the price $\varpi$ in Problem \ref{problem:MFGs price problem GS}.

First, we recall the derivation of Problem \ref{problem:MFGs price problem GS}. At time $t=0$, a representative player owns a quantity $x_0\in \Rr$ of the commodity. By selecting his trading rate $v:[0,T]\to \Rr$, this player minimizes the cost functional
\begin{equation*}
v\mapsto \int_0^T \left(L(X(t),v(t)) + \varpi(t) v(t) \right) \; dt + u_T\left(X(T)\right),
\end{equation*}
where $X$ solves the ODE:
\begin{equation}
\label{eq:Agent dynamics}
\begin{cases}
\dot{X}(t)=v(t), & t\in[0,T],
\\
X(0)=x_0.
\end{cases}
\end{equation}
When the price $\varpi$ is known, the optimal trading rate $v^*$ can be obtained through the value function $u$, which solves the first equation in \eqref{eq:MFG system}. At points of differentiability, we have
\begin{equation}\label{eq:optimal feedback}
	v^*(t,x)=-H_p(x,\varpi(t)+u_x(t,x)).
\end{equation}
Because all players select their optimal strategy, an initial distribution $m_0 \in \Pp(\Rr)$ evolves in time according to the second equation in \eqref{eq:MFG system}. In turn, the balance constraint, the third equation in \eqref{eq:MFG system}, requires the aggregated demand and the supply $Q$ to match. Notice that $\varpi$ is the coupling term in \eqref{eq:MFG system}. Thus, it is enough to approximate $\varpi$ to decouple the system \eqref{eq:MFG system} and approximate the entire solution of Problem \ref{problem:MFGs price problem GS}.

Relying on \eqref{eq:optimal feedback}, in the following, we consider feedback controls $v=v(t,x)$. Given $\tilde{\varpi}:[0,T]\to \Rr$ and $\tilde{v}:[0,T]\times \Rr \to \Rr$, let $\tilde{m}:[0,T]\times \Rr \to \Rr$ solve
\begin{equation}\label{eq:Auxiliary transport eq}
\begin{cases}
\tilde{m}_t + \left( \tilde{v}(t,x)\tilde{m}\right)_x =0 & [0,T]\times \Rr,
\\
\tilde{m}(0,x)=m_0(x) & x\in \Rr.
\end{cases}
\end{equation}
Define the imbalance function, $\mathcal{I}:[0,T] \to \Rr$, by
\begin{equation}\label{def:imbalance function}
	\mathcal{I}(t)=\tilde{\varpi}(t)\left( \int_{\Rr} \tilde{v}(t,x) \tilde{m}(t,x) \dx - Q(t)\right),\quad t \in [0,T].
\end{equation}
%\begin{equation}\label{def:imbalance function}
%	\mathcal{I}[\tilde{\varpi}](t)=\int_{\Rr} -H_p(x,\tilde{\varpi} + \tilde{u}_x) \tilde{m} \dx - Q(t),\quad t \in [0,T],
%\end{equation}
%where $(\tilde{u},\tilde{m})$ solve \eqref{eq:MFG system} for $\tilde{\varpi}$. 
The imbalance function measures deviations from the balance constraint proportionally to $\tilde{\varpi}$. Notice that $(u,m,\varpi)$ solving Problem \ref{problem:MFGs price problem GS} satisfies $\mathcal{I} \equiv 0$ for $\tilde{\varpi}=\varpi$ and $\tilde{v}=v^*$, for $v^*$ given by \eqref{eq:optimal feedback}. 

Next, we consider the relation between $v^*$ and the optimal trading rate for a finite-player game. Let $N$ be the number of players, and let $x_0^i \in \Rr$, $i=1,\ldots,N$, be a sample of initial positions drawn according to $m_0$. 
%Writing $\bx_0 = \left(x_0^1,\ldots,x_0^N\right)$, we consider
Let $v^i: [0,T] \times \mathbb{R} \to \mathbb{R}$, $1\leq i \leq  N$, be feedback controls, and let $X^i$ solve \eqref{eq:Agent dynamics} for $v^i$ and initial condition $x_0^i$. Given $\tilde{\varpi}^N:[0,T]\to \Rr$, consider the functional
\begin{equation}\label{eq:Functional per agent}
v\mapsto \int_0^T \big( L(X(t),v(t,X(t))) + \tilde{\varpi}^N(t) \left( v(t,X(t)) - Q(t) \right) \big) dt + u_T\left(X(T)\right).
\end{equation}
Replacing $\tilde{m}$ by the empirical measure 
\begin{equation*}
%\label{eq:Empirical measure}
	\tilde{m}^N(t,x)=\frac{1}{N}\sum_{i=1}^N \delta_{X^i(t)}(x), \quad t\in[0,T], \; x\in \Rr,
\end{equation*}
the imbalance function \eqref{def:imbalance function} becomes
\begin{equation}\label{eq:Balance condition}
	\mathcal{I}^N(t) = \tilde{\varpi}^N(t)\left( \frac{1}{N} \sum_{i=1}^N v^i(t,X^i(t)) - Q(t)\right), \quad  t \in [0,T].
\end{equation}
Thus, the existence of ${v^*}^i$ minimizing \eqref{eq:Functional per agent} for $1\leq i \leq N$ is equivalent to the existence of 
\[
	\bv^*=({v^*}^1,\ldots,{v^*}^N)
\]
minimizing the functional
%\begin{gather*}%\label{eq:Nagent min problem}
%\bv \mapsto \frac{1}{N} \sum_{i=1}^N \int_0^T \big(L(X^i(t),v^i(t)) + \tilde{\varpi}^N(t)\left( v^i(t) - Q(t) \right) \big)~ dt + u_T\left(X^i(T)\right).
%\end{gather*}
\begin{gather*}%\label{eq:Nagent min problem}
\bv \mapsto \frac{1}{N} \sum_{i=1}^N \int_0^T \big(L(X^i(t),v^i(t)) + \mathcal{I}^N(t) \big) \; dt + u_T\left(X^i(T)\right), 
\end{gather*}
where $X^i$ is controlled by $v^i$. 
In the $N$-player price formation model (see \cite{SummerCamp2019}), there exists $\varpi^N:[0,T]\to\Rr$ such that the functional
\begin{gather}\label{eq:Functional N agent}
\bv \mapsto \frac{1}{N} \sum_{i=1}^N \int_0^T \big( L(X^i(t),v^i(t)) + \varpi^N(t) v^i(t) \big)\; dt + u_T\left(X^i(T)\right) 
\end{gather}
has a minimizer $\bv^*$ in the set of admissible controls
\[
	\mathcal{A}^N=\left\{ \bv:[0,T]\to\Rr^N:\; \frac{1}{N}\sum\limits_{i=1}^N v^i(t)-Q(t)=0, \; t\in[0,T]\right\}.
\]
Thus, $\mathcal{I}^N \equiv 0$ for $\tilde{\varpi}^N=\varpi^N$ and $\bv = \bv^*$. Moreover, adding the constant $-\int_0^T \varpi^N(t)Q(t) dt$ to the functional in \eqref{eq:Functional N agent}, we consider the functional
\begin{gather}\label{eq:Saddle points functional}
(\bv,\tilde{\varpi}^N) \mapsto \frac{1}{N} \sum_{i=1}^N \int_0^T \big(L(X^i(t),v^i(t)) + \tilde{\varpi}^N(t)\left(v^i(t) - Q(t) \right)\big) dt + u_T\left(X^i(T)\right),
\end{gather}
which shows that $\varpi^N$ is the Lagrange multiplier associated with the discrete balance constraint defining $\mathcal{A}^N$. Therefore, $(\bv^*,\varpi^N)$ is a saddle point of the functional in \eqref{eq:Saddle points functional}.
%Reciprocally, under the balance constraint, any optimal vector $\bv^*$ of the previous problem provides an optimal control for \eqref{eq:Functional per agent}, $1\leq i \leq N$. Therefore, Problem \ref{problem:MGFs finite} is equivalent to \eqref{eq:Nagent min problem}.
%
%Substituting the balance condition \eqref{eq:Balance condition} into the expression to minimize in \eqref{eq:Nagent min problem}, we get 
%\begin{equation}\label{eq:Aux N Agent reduced}
%\frac{1}{N} \sum_{i=1}^N \int_0^T L(X^i(t),v^i(t)) + \varpi(t) Q(t) ~ dt + u_T\left(X^i(T)\right)
%\end{equation}
%Because the term $\varpi Q$ in \eqref{eq:Aux N Agent reduced} is independent of $\bv$, we obtain that \eqref{eq:Nagent min problem} is equivalent to the following problem.
%\begin{problem}\label{problem:problem no price}
%Assume that at time $t=0$, each agent $i$ owns a quantity $x_0^i\in\Rr$ of the commodity. Given $L$ and $u_T$, non-negative functions on $\Rr^2$ and $\Rr$, respectively, find a vector of controls $\bv$ that minimizes the cost functional
%\begin{gather*}
%%\label{eq: reduced Nagent min problem}
%\bv\mapsto \frac{1}{N} \sum_{i=1}^N \int_0^T L(X^i(t),v^i(t)) dt + u_T\left(X^i(T)\right),
%\end{gather*}
%subject to \eqref{eq:Balance condition}, where $\bX$ solves \eqref{eq:N agents dynamics with initial condition}. 
%\end{problem}
Notice that the $N$-player price formation problem solves Problem \ref{problem:MFGs price problem GS} with singular initial data 
\[
	m_0 = \frac{1}{N}\sum_{i=1}^N \delta_{x^i_0}
\]
when the players consider the price as given and do not anticipate their own influence on it. 
Moreover, if the initial data converges to a continuous distribution in $\Rr$ as $N\to \infty$, we expect the convergence of $\varpi^N$ to $\varpi$ and ${v^*}^i$ to $v^*$, where $v^*$ is given by \eqref{eq:optimal feedback}. The convergence was proved for the linear-quadratic model in \cite{gomes2021randomsupply} under the assumption that a linear stochastic differential equation describes $Q$. In particular, convergence follows for the deterministic supply with mean-reverting dynamics we consider in Section \ref{sec: Numerical results}. 

Based on the previous results, to approximate $\varpi$ solving Problem \ref{problem:MFGs price problem GS}, we consider the following unconstrained variational problem
\begin{gather}\label{eq:Nagent minmax problem}
\sup_{\tilde{\varpi}^N} \inf_{\bv} \frac{1}{N} \sum_{i=1}^N \int_0^T \big(L(X^i(t),v^i(t)) + \tilde{\varpi}^N(t)\left( v^i(t) - Q(t) \right) \big)\; dt + u_T\left(X^i(T)\right).
\end{gather}
Here, the infimum is over all controls and not only those in $\mathcal{A}^N$. Notice that \eqref{eq:Nagent minmax problem} penalizes deviations from $\mathcal{A}^N$ through $\tilde{\varpi}^N$ using the imbalance function \eqref{eq:Balance condition} in accordance with \eqref{def:imbalance function}.

Lastly, given $\tilde{m} \in C([0,T];\Pp(\Rr))$, we define the following auxiliary process for the mean quantity:
\begin{equation*}%\label{def:Xbar continuous}
	\overline{X}(t;\tilde{m}) = \int_{\Rr} x \tilde{m}(t,x)dx, \quad t \in [0,T].
\end{equation*}
When $m$ belongs to the triplet $(u,m,\varpi)$ solving \eqref{eq:MFG system}, the dynamics of $\overline{X}(\cdot;m)$ are given by the second and third equations in \eqref{eq:MFG system} because
\begin{equation}\label{eq:Xbar dynamics}
	\dot{\overline{X}}(t;m) = \int_{\Rr} x m_t dx=\int_{\Rr} x \left(H_p(x,\varpi + u_x)m\right)_x dx = Q(t), \quad t \in [0,T],
\end{equation}
provided $\int_{\Rr} x m_0(x) dx < \infty$. Analogously, for the empirical measure $\tilde{m}^N$, we consider
\begin{equation*}%\label{eq:Balance X}
	\overline{X}(t;\tilde{m}^N) = \frac{1}{N} \sum_{i=1}^N X^i(t), \quad t \in [0,T].
\end{equation*}
As shown in \cite{gomes2018mean} and \cite{gomes2021randomsupply}, the previous auxiliary processes simplify the computation of the price $\varpi$ in the linear-quadratic setting. This setting will be considered in Section \ref{sec: Numerical results} to validate our numerical results.

%--------------ASSUMPTIONS--------------------
%--------------ASSUMPTIONS--------------------
%--------------ASSUMPTIONS--------------------
%--------------ASSUMPTIONS--------------------
%--------------ASSUMPTIONS--------------------
\section{Assumptions}
\label{sec:Assumptions}
In this section, we state the assumptions that we use to obtain the a posteriori estimates. These estimates are obtained using an Euler-Lagrange equation that characterizes trajectories that solve the optimal control problem \eqref{eq:MFG system} describes. 

The first two assumptions are used to prove estimates on the error of sub-optimal trajectories of agents. These estimates depend on both an Euler-Lagrange equation and the error of the price approximation. These assumptions imply the convexity conditions that were required in \cite{gomes2018mean}. We consider a Lagrangian $L:\Rr^2 \to \Rr$ that satisfies the following convexity assumption.

\begin{hyp}\label{hyp:L uniformly convex}
The Lagrangian $L:\Rr^2 \to \Rr$ is uniformly convex in $(x,v)$; that is, there exists $\beta>0$ such that $(x,v)\mapsto L(x,v)-\frac{\beta}{2}\|(x,v)\|^2$ is convex. Furthermore, $L\in C^2(\Rr^2)$.
\end{hyp}

Analogously, we require convexity for the terminal cost, which guarantees the uniqueness of solutions to the price formation problem (see \cite{gomes2018mean}).

\begin{hyp}\label{hyp:uT convex DS}
	The terminal cost $u_T:\Rr \to \Rr$ is uniformly convex; that is, there exists $\gamma_T>0$ such that $x\mapsto u_T(x) - \frac{\gamma_T}{2}x^2$ is convex.
\end{hyp}

%The next assumptions were first considered in \cite{gomes2018mean} to obtain existence and uniqueness of solutions of Problem \ref{problem:MFGs price problem GS}.
%
%\begin{hyp}%\label{hyp:V-uT Lipschitz 2nd D bounds DS}
%	The potential $V$ and the terminal cost $u_T$ are $C^2$ regular and globally Lipschitz, and there exists a constant $C>0$ such that
%	\[
%		|V''|\leq C, \quad |u_T''|\leq C, \quad |m_0''|\leq C.
%	\]
%\end{hyp}
%
%In the next assumption we consider a general Hamiltonian $H:\Rr^2 \to \Rr$. 
%
%\begin{hyp}\label{hyp:H uconvex DS}
%	The Hamiltonian $H$ satisfies $H \in C^2(\Rr^2)$, and for all $x\in\Rr$, the map $p\mapsto H(x,p)$ is 
%%	strictly  convex; that is, $H_{pp}(x,p)>0$ for all $(x,p)\in\Rr^2$ {\color{red} GomesTran}.}
% uniformly convex; that is, there exists a constant $\beta' >0$ such that $H_{pp}(x,p)\geq \beta'$ for all $(x,p)\in\Rr^2$. In addition, there exists $C>0$ such that 
% \[
% 	|H_{ppp}| \leq C.
% \]
%% {\color{red} GomesSaude}.
%\end{hyp}

%\begin{hyp}%\label{hyp:Q C infty DS}
%	The supply function, $Q$, is $C^\infty([0,T])$. 
%%	 {\color{red} see GomesSaude, Page 2, Problem 1 formulation} 
%\end{hyp}

The following assumption was considered in \cite{gomes2018mean} to obtain, under further technical conditions, the existence of solutions to the price formation problem. This assumption simplifies the proof of the a posteriori estimates.

\begin{hyp}\label{hyp:L separability DS}
	The Lagrangian $L$ is separable; that is, 
	\begin{equation*}
		L(x,v)=\ell(v)+V(x),
	\end{equation*} 
where $V\in C^2(\Rr)$ is convex and bounded from below.
\end{hyp}

%\begin{remark}%\label{remark:H separability}
%Under the previous assumption, $H$, defined by \eqref{eq:Legendre transform}, is separable as well; that is
%\[
%	H(x,p)=\mathcal{H}(p) - V(x),
%\]
%where $\mathcal{H}$ is the Legendre transform of $\ell$. In \cite{gomes2018mean}, authors considered $\ell$ to be uniformly convex. Hence, (\cite{cannarsa}, Corollary A. 2.7)
%	\[
%		\mathcal{H}''\leq \frac{1}{\beta}.
%	\]
%Furthermore, Assumption \ref{hyp:H uconvex DS} implies $\beta'\leq \mathcal{H}'' \leq 1/\beta$, which is a technical requirement in the proof of existence of solutions to Problem \ref{problem:MFGs price problem GS} presented in \cite{gomes2018mean}. In the separable case, Assumption \ref{hyp:L uniformly convex} holds when, for instance, both $\ell$ and $V$ are uniformly convex. 
%\end{remark}

%\begin{remark}%\label{remark:H separability}
Notice that, under the previous assumption, $H$, defined by \eqref{eq:Legendre transform}, is separable as well; that is
\[
	H(x,p)=\mathcal{H}(p) - V(x),
\]
where $\mathcal{H}$ is the Legendre transform of $\ell$. In this case, Assumption \ref{hyp:L uniformly convex} holds when, for instance, both $\ell$ and $V$ are uniformly convex. 
%\end{remark}

The following assumption is used to obtain bounds on the error of the price approximation in terms of an Euler-Lagrange equation. These assumptions are a standard tool in convex optimization (see \cite[Chapter 1]{Ryu2015APO}).

\begin{hyp}\label{hyp:Lipschitz}
The Hamiltonian $H$ and the terminal cost $u_T$ are differentiable, and their derivatives $H_x$ and $u'_T$ are Lipschitz continuous. 
\end{hyp}

\begin{hyp}\label{hyp:Lipschitz 2}
$H_p$ is Lipschitz continuous. 
\end{hyp}

A Lagrangian and terminal cost satisfying the previous assumptions is the quadratic model. For $\kappa,\zeta \in \Rr$, $\eta,\gamma\geq 0$, and $c>0$, let
\[
	L(x,v)=\frac{\eta}{2} \left( x - \kappa\right)^2 + \frac{c}{2} v^2, \quad \mbox{and} \quad u_T\left(x\right) = \frac{\gamma}{2}\left(x-\zeta\right)^2.
\]
With the previous selection, agents have a preferred state $\kappa$ during the game and finish close to $\zeta$ while they are charged proportionally to the trading rate. In Section \ref{sec: Numerical results}, we use the quadratic framework to illustrate our results numerically.

%\begin{hyp}\label{hyp:uT' Lipschitz}
%The terminal cost $u_T$ is differentiable, and $u'_T$ is Lipschitz continuous.
%\end{hyp}

%--------------A POSTERIORI ESTIMATES--------------------
%--------------A POSTERIORI ESTIMATES--------------------
%--------------A POSTERIORI ESTIMATES--------------------
%--------------A POSTERIORI ESTIMATES--------------------
%--------------A POSTERIORI ESTIMATES--------------------

\section{A posteriori estimates}
\label{sec:A posteriori estimates}
In this section, we consider a posteriori estimates using the first-order characterization of solutions associated with the optimization problem that each agent solves, as introduced in Section \ref{sec:The mfg price problem}. This estimate will be used to assess the convergence of the approximate solutions obtained using NN.  

%We recall that the Euler-Lagrange equation of \eqref{eq:Functional per agent} is
%\begin{equation}\label{eq:EL single agent}
%	\begin{cases}
%	L_x(X^i(t),v^i(t))-\frac{d}{dt} \left( L_v(X^i(t),v^i(t)) + \varpi(t)\right) =0 &  t \in [0,T],
%	\\
%	L_v(X^i(T),v^i(T)) +\varpi(T) + u_T'\left(X^i(T)\right)=0.
%	\end{cases}
%\end{equation}
%For $P^i(t) := - \left( L_v(X^i(t),v^i(t)) + \varpi(t) \right)$, the previous is equivalent to the following Hamiltonian system for $(X^i,P^i)$
%\[
%	\begin{cases}
%	\tfrac{d}{dt} P^i(t) = H_x(X^i(t),P^i(t)+\varpi(t)) & t \in [0,T],
%	\\
%	P^i(T) = u'_T(X^i(T)),
%	\\
%	\tfrac{d}{dt} X^i(t) = -H_p(X^i(t),P^i(t)+\varpi(t)) &  t \in [0,T],
%	\\
%	X^i_0 = x^i_0,
%	\end{cases}
%\]
The Euler-Lagrange equation of \eqref{eq:Functional per agent} is
\begin{equation}\label{eq:EL single agent}
	\begin{cases}
	L_x(X(t),v(t))-\frac{d}{dt} \left( L_v(X(t),v(t)) + \varpi(t)\right) =0 &  t \in [0,T],
	\\
	L_v(X(T),v(T)) +\varpi(T) + u_T'\left(X(T)\right)=0.
	\end{cases}
\end{equation}
For $P(t) := - \left( L_v(X(t),v(t)) + \varpi(t) \right)$, the previous equation, together with equation \eqref{eq:Agent dynamics}, are equivalent to the following Hamiltonian system for $(X,P)$
\begin{equation*}
%\label{eq:HSystem single agent}
	\begin{cases}
	\dot{P}(t) = H_x(X(t),P(t)+\varpi(t)) & t \in [0,T],
	\\
	P(T) = u'_T(X(T)),
	\\
	\dot{X}(t) = -H_p(X(t),P(t)+\varpi(t)) &  t \in [0,T],
	\\
	X(0) = x_0,
	\end{cases}
\end{equation*}
where $H$ is given by \eqref{eq:Legendre transform}. The solution $(u,m,\varpi)$ of \eqref{eq:MFG system} defines $v^*:[0,T]\times \Rr\to \Rr$, according to \eqref{eq:optimal feedback}, and the pair $(X^*,v^*)$ given by \eqref{eq:Agent dynamics} solves \eqref{eq:EL single agent}. 
Let $\tilde{v}:[0,T]\times \Rr \to \Rr$ and $\tilde{\varpi}:[0,T]\to \Rr$ satisfy 
\begin{equation}\label{eq:EL single agent residual}
	\begin{cases}
	L_x(\tilde{X}(t),\tilde{v}(t,\tilde{X}(t)))-\frac{d}{dt} \left( L_v(\tilde{X}(t),\tilde{v}(t,\tilde{X}(t))) + \tilde{\varpi}(t)\right) = \epsilon(t) &  t \in [0,T],
	\\
	L_v(\tilde{X}(T),\tilde{v}(T,\tilde{X}(T))) +\tilde{\varpi}(T) + u_T'\left(\tilde{X}(T)\right)=\epsilon_T,
	\end{cases}
\end{equation}
where $\tilde{X}$ satisfies \eqref{eq:Agent dynamics} with $\tilde{v}$, $\epsilon:[0,T]\to \Rr$, and $\epsilon_T \in \Rr$. We regard $\tilde{v}$ and $\tilde{\varpi}$ as perturbations of $v^*$ and $\varpi$, respectively, and $\epsilon$ and $\epsilon_T$ as residuals incurred by the perturbations in \eqref{eq:EL single agent}.
For $\tilde{P}(t) := - \left( L_v(\tilde{X}(t),\tilde{v}(t,\tilde{X}(t))) + \tilde{\varpi}(t) \right)$, \eqref{eq:EL single agent residual} is equivalent to
\begin{equation*}
%\label{eq:HSystem one agent penalized}
	\begin{cases}
	\dot{\tilde{P}}(t) = H_x(\tilde{X}(t),\tilde{P}(t)+\tilde{\varpi}(t))+\epsilon(t) & t \in [0,T],
	\\
	\tilde{P}(T) = u'_T(\tilde{X}(T))-\epsilon_T,
	\\
	\dot{\tilde{X}}(t) = -H_p(\tilde{X}(t),\tilde{P}(t)+\tilde{\varpi}(t)) &  t \in [0,T],
	\\
	X(0) = x_0.
	\end{cases}
\end{equation*}
Notice that no residual appears in the equation for $\dot{\tilde{X}}$ because $\tilde{X}$ is driven by $\tilde{v}$, according to \eqref{eq:Agent dynamics}. We study how far the solutions of \eqref{eq:EL single agent residual} are from the solutions of \eqref{eq:EL single agent} in terms of the residuals $\epsilon$ and $\epsilon_T$. We can estimate the difference between $v^*$ and $\tilde{v}$ in terms of the residuals $\epsilon$ and $\epsilon_T$, and the difference between $\varpi$ and $\tilde{\varpi}$, as we show next.

\begin{proposition}\label{Prop:FirstBound}
Suppose that Assumptions \ref{hyp:L uniformly convex} and \ref{hyp:uT convex DS} hold. Let $(u,m,\varpi)$ solve \eqref{eq:MFG system} and assume that $v^*$ is given by \eqref{eq:optimal feedback}. Let $\tilde{\varpi},\tilde{v}:[0,T]\to\Rr$ satisfy \eqref{eq:EL single agent residual} for some $\epsilon:[0,T]\to \Rr$ and $\epsilon_T \in \Rr$, where $\tilde{X}$ is given by $\tilde{v}$ through \eqref{eq:Agent dynamics}. Then, there exists $C(\beta)>0$ such that 
\begin{align*}
	& \|X^*-\tilde{X}\|^2_{L^2([0,T])} + \|v^*-\tilde{v}\|^2_{L^2([0,T])} 
	\\
	& \leq C \left( \|\varpi-\tilde{\varpi}\|^2_{L^2([0,T])} +  \|\epsilon\|^2_{L^2([0,T])} + T (\epsilon_T)^2 \right).
\end{align*}
%\begin{align*}
%	\int_0^T \left( X(t)-\tilde{X}(t) \right)^2 +\left( v(t)-\tilde{v}(t) \right)^2 dt \leq C(\beta) \int_0^T \left( \left( \varpi(t)-\tilde{\varpi}(t)\right)^2 + (\epsilon_T)^2 + (\epsilon(t))^2\right) dt.
%\end{align*} 
\end{proposition}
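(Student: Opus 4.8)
The plan is to run the classical convexity/energy argument built on the Euler--Lagrange system, comparing the exact trajectory $(X^*,v^*)$ satisfying \eqref{eq:EL single agent} with the perturbed one $(\tilde X,\tilde v)$ satisfying \eqref{eq:EL single agent residual}. Write $\hat X:=X^*-\tilde X$, $\hat v:=v^*-\tilde v$, $\hat\varpi:=\varpi-\tilde\varpi$, and note that $\hat X(0)=0$ and $\dot{\hat X}=\hat v$ because both trajectories are driven by their own controls through \eqref{eq:Agent dynamics}, so no residual enters the state equation. First I would subtract the first lines of \eqref{eq:EL single agent} and \eqref{eq:EL single agent residual}, multiply by $\hat X$, integrate over $[0,T]$, and integrate the $\frac{d}{dt}$ term by parts; the interior boundary term at $t=0$ vanishes since $\hat X(0)=0$, while the term at $t=T$ is rewritten by subtracting the two terminal conditions, which replaces $L_v(X^*(T),v^*(T))-L_v(\tilde X(T),\tilde v(T))+\hat\varpi(T)$ by $-\bigl(u_T'(X^*(T))-u_T'(\tilde X(T))\bigr)-\epsilon_T$.

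After rearranging, this produces the identity
\begin{align*}
&\int_0^T\!\Bigl(\bigl(L_x(X^*,v^*)-L_x(\tilde X,\tilde v)\bigr)\hat X+\bigl(L_v(X^*,v^*)-L_v(\tilde X,\tilde v)\bigr)\hat v\Bigr)\,dt+\bigl(u_T'(X^*(T))-u_T'(\tilde X(T))\bigr)\hat X(T)\\
&\qquad=-\int_0^T\epsilon\,\hat X\,dt-\int_0^T\hat\varpi\,\hat v\,dt-\epsilon_T\,\hat X(T).
\end{align*}
By Assumption \ref{hyp:L uniformly convex}, $\nabla L$ is $\beta$-strongly monotone, so the first integral is bounded below by $\beta\bigl(\|\hat X\|_{L^2([0,T])}^2+\|\hat v\|_{L^2([0,T])}^2\bigr)$; by Assumption \ref{hyp:uT convex DS} the terminal term is $\geq\gamma_T\hat X(T)^2\geq0$ and may simply be discarded, which is precisely what keeps the final constant dependent on $\beta$ only. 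The cross term $\int_0^T\hat\varpi\,\hat v$ legitimately lands on the right-hand side because here $\hat\varpi$ is treated as given data, matching the shape of the asserted bound.

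For the right-hand side I would use Cauchy--Schwarz and Young's inequality on the first two terms, and for the last one use $\hat X(0)=0$ to write $|\hat X(T)|=\bigl|\int_0^T\hat v\,dt\bigr|\leq\sqrt{T}\,\|\hat v\|_{L^2([0,T])}$, which is exactly what generates the factor $T$ multiplying $(\epsilon_T)^2$. Choosing the Young parameters so that the $\|\hat X\|_{L^2}^2$ and $\|\hat v\|_{L^2}^2$ contributions on the right absorb one half of $\beta\bigl(\|\hat X\|_{L^2}^2+\|\hat v\|_{L^2}^2\bigr)$ then yields the claim with an explicit constant of the form $C(\beta)=c/\beta^2$.

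The only point requiring genuine care is the integration by parts: one must ensure that $t\mapsto L_v(X^*(t),v^*(t))+\varpi(t)$ and its perturbed analogue are absolutely continuous, so that the fundamental theorem of calculus applies and no hidden interior jumps are overlooked; this is where the $C^2$ regularity of $L$, the Lipschitz regularity of $v^*$ inherited from $u_x$ and the continuity of $\varpi$, and the assumed regularity of the perturbation $\tilde v$ come in. Everything else is routine bookkeeping. I would also remark that the separability Assumption \ref{hyp:L separability DS} is not needed for this proposition — it only simplifies later estimates — and that the same computation can be phrased equivalently on the Hamiltonian systems for $(X,P)$ and $(\tilde X,\tilde P)$ via $\frac{d}{dt}\bigl[(P^*-\tilde P)(X^*-\tilde X)\bigr]$, but the Lagrangian form above makes the use of uniform convexity most transparent.
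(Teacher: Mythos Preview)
Your argument is correct and follows the same convexity/energy route as the paper: subtract the two Euler--Lagrange systems, integrate by parts using $\hat X(0)=0$, invoke the $\beta$-strong monotonicity of $\nabla L$ and the convexity of $u_T$, and finish with Young's inequality. The only cosmetic difference is the handling of $\epsilon_T$: the paper inserts the constant $\epsilon_T$ inside the total-derivative term so that the terminal boundary contribution becomes exactly $-(u_T'(X^*(T))-u_T'(\tilde X(T)))\hat X(T)$ and the $\epsilon_T$ reappears as $\epsilon_T\hat v$ in the running integral (whence $\int_0^T\epsilon_T^2\,dt=T\epsilon_T^2$ after Young), whereas you keep $-\epsilon_T\hat X(T)$ as a boundary term and bound it via $|\hat X(T)|\leq\sqrt{T}\,\|\hat v\|_{L^2}$; both routes give the same $T\epsilon_T^2$ factor and a constant depending only on $\beta$.
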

\begin{proof}
Let
\begin{align*}
	L_x(t)=L_x\left(X^*(t),v^*(t)\right), \quad L_v(t)=L_v\left(X^*(t),v^*(t)\right),
	\\
	\tilde{L}_x(t)=L_x\left(\tilde{X}(t),\tilde{v}(t) \right), \quad \tilde{L}_v(t)=L_v\left(\tilde{X}(t),\tilde{v}(t) \right).
\end{align*}
By Assumption \ref{hyp:L uniformly convex}%(\cite{bauschke2017convex}, Chapter 17)
, and using \eqref{eq:EL single agent} and \eqref{eq:EL single agent residual}, we have
\begin{align*}%\label{eq:Aux EL bound0}
	& \beta \left(\left( X^*(t)-\tilde{X}(t) \right)^2 +\left( v^*(t)-\tilde{v}(t) \right)^2\right) \nonumber
	\\
	& \leq \left(L_x(t) - \tilde{L}_x(t)\right)\left(X^*(t) - \tilde{X}(t) \right) + \left(L_v(t) - \tilde{L}_v(t)\right)\left(v^*(t) - \tilde{v}(t) \right) \nonumber
	\\
%	& =\tfrac{d}{dt}\left( L_v(t) + \varpi(t) - \left(\tilde{L}_v(t) + \tilde{\varpi}(t)- \epsilon_T \right)  \right) \left(X^*(t) - \tilde{X}(t) \right) \nonumber
%	\\
%	& \quad + \left(L_v(t) +\varpi(t) - \left(\tilde{L}_v(t)+\tilde{\varpi}(t)  -\epsilon_T \right) \right)\tfrac{d}{dt}\left(X(t) - \tilde{X}(t) \right) \nonumber
%	\\
%	& \quad - \left(\varpi(t) - \tilde{\varpi}(t) + \epsilon_T \right) \left(v(t) - \tilde{v}(t) \right) - \epsilon(t) \left(X(t) - \tilde{X}(t) \right)  \nonumber
%	\\
	& = \tfrac{d}{dt}\left[\left( L_v(t) + \varpi(t) - \left(\tilde{L}_v(t) + \tilde{\varpi}(t) - \epsilon_T \right) \right) \left(X^*(t) - \tilde{X}(t) \right) \right] \nonumber
	\\
	& \quad - \left(\varpi(t) - \tilde{\varpi}(t) + \epsilon_T \right) \left(v^*(t) - \tilde{v}(t) \right) - \epsilon(t) \left(X^*(t) - \tilde{X}(t) \right) .
\end{align*}
Integrating the previous on $[0,T]$, using the terminal condition in \eqref{eq:EL single agent residual}, the initial condition in \eqref{eq:Agent dynamics}, and Assumption \ref{hyp:uT convex DS}%(\cite{bauschke2017convex},  Proposition 17.7)
, we obtain 
\begin{align*}
	& \beta \int_0^T \left( X^*(t)-\tilde{X}(t) \right)^2 +\left( v^*(t)-\tilde{v}(t) \right)^2 dt
	\\
	& \leq -\left(u'_T\left(X^*(T)\right) - u'_T\left(\tilde{X}(T)\right)  \right) \left(X^*(T) - \tilde{X}(T)\right)
	\\
	& \quad + \int_0^T \left( \varpi(t)-\tilde{\varpi}(t) +\epsilon_T \right) \left(\tilde{v}(t)-v^*(t) \right) + \epsilon(t) \left(\tilde{X}(t)-X^*(t) \right) dt
	\\
	&\leq \int_0^T \left( \varpi(t)-\tilde{\varpi}(t) + \epsilon_T \right) \left(\tilde{v}(t)-v^*(t) \right) + \epsilon(t) \left(\tilde{X}(t)-X^*(t) \right) dt
	\\
	& \leq \int_0^T \delta \left( \left( \varpi(t)-\tilde{\varpi}(t)\right)^2 + (\epsilon_T)^2 + (\epsilon(t))^2\right) + \tfrac{1}{2\delta} \left( \left(\tilde{X}(t)-X^*(t) \right)^2 + \left(\tilde{v}(t)-v^*(t) \right)^2   \right) dt,
\end{align*}
where $\delta >0$ is to be chosen. Taking $\delta>\tfrac{1}{2 \beta}$ in the previous, we get
%\begin{align*}
%	& \left( \beta - \frac{1}{2 \delta} \right) \int_0^T \left( X(t)-\tilde{X}(t) \right)^2 +\left( v(t)-\tilde{v}(t) \right)^2 dt \nonumber
%	\\
%	& \leq \int_0^T \delta \left( \left( \varpi(t)-\tilde{\varpi}(t)\right)^2 + (\epsilon_T)^2 + (\epsilon(t))^2\right) dt
%\end{align*} 
\begin{align*}
	& \int_0^T \left( X^*(t)-\tilde{X}(t) \right)^2 +\left( v^*(t)-\tilde{v}(t) \right)^2 dt 
	\\
	& \leq \tfrac{2\delta^2}{2\beta \delta -1} \int_0^T \left( \left( \varpi(t)-\tilde{\varpi}(t)\right)^2 + (\epsilon_T)^2 + (\epsilon(t))^2\right) dt,
\end{align*} 
which concludes the proof.
\end{proof}

The first-order condition \eqref{eq:EL single agent} characterizes the optimal vector field $v^*$ given by \eqref{eq:optimal feedback} derived from the minimization problem that a representative player solves. However, \eqref{eq:EL single agent} does not characterize the balance condition that holds between $v^*$ and the supply. Therefore, we consider the residual in the balance condition introduced by the perturbations $\tilde{v}$ and $\tilde{\varpi}$. Let $\tilde{m}$ solve \eqref{eq:Auxiliary transport eq} with $\tilde{v}$. Then, 
\begin{equation}\label{eq:Balance residual}
	\int_{\Rr} \tilde{v}(t,x)\tilde{m}(t,x) \dx = Q(t) + \epsilon_q(t), \quad t \in [0,T],
\end{equation}
for some $\epsilon_q: [0,T]\to \Rr$. Using the residuals $\epsilon$, $\epsilon_T$, and $\epsilon_q$, we estimate the difference between both $\tilde{v}$ and $\tilde{\varpi}$ solving \eqref{eq:EL single agent residual} from $v^*$ and $\varpi$ solving \eqref{eq:EL single agent}. To this end, we adopt the particle approximation approach introduced in Section \ref{sec:The mfg price problem} with the finite-player game. Given $x_0^i \in \Rr$, $i=1,\ldots,N$, the Hamiltonian system of the $N$-player price formation model (see \eqref{eq:Functional N agent}) is
\begin{equation}
\label{eq:HSystem N agent}
	\begin{cases}
	\dot{P}^i(t) = H_x(X^i(t),P^i(t)+\varpi^N(t)) ,
	\\
	P^i(T) = u'_T(X^i(T)),
	\\
	\dot{X}^i(t) = -H_p(X^i(t),P^i(t)+\varpi^N(t)) ,
	\\
	X^i(0) = x^i_0,
	\\
	\frac{1}{N}\sum\limits_{i=1}^N -H_p(X^i(t),P^i(t)+\varpi^N(t)) =Q(t), 
	\end{cases} t \in [0,T],
\end{equation} 
where $P^i(t) := - \left( L_v(X^i(t),v^i(t)) + \varpi^N(t) \right)$ for $1 \leq i \leq N$. Let $\tilde{\varpi}^N$ and $(\tilde{X}^i,\tilde{P}^i)$ solve
\begin{equation}\label{eq:HSystem N agents penalized}
	\begin{cases}
	\dot{\tilde{P}}^i(t) = H_x(\tilde{X}^i(t),\tilde{P}^i(t)+\tilde{\varpi}^N(t))+\epsilon^i(t),
	\\
	\tilde{P}^i(T) = u'_T(\tilde{X}^i(T))-\epsilon^i_T,
	\\
	\dot{\tilde{X}}^i(t) = -H_p(\tilde{X}^i(t),\tilde{P}^i(t)+\tilde{\varpi}^N(t)) ,
	\\
	X^i(0) = x^i_0,
	\\
	\frac{1}{N}\sum\limits_{i=1}^N -H_p(\tilde{X}^i(t),\tilde{P}^i(t)+\tilde{\varpi}^N(t)) =Q(t)+\epsilon_q(t), 
	\end{cases} t \in [0,T],
\end{equation}
for $1 \leq i \leq N$. Let
\begin{align*}
	\tilde{\bX}=(\tilde{X}^1,\ldots,\tilde{X}^N), \quad \bX=(X^1,\ldots,X^N), \quad \bepsilon = (\epsilon^1 , \ldots ,\epsilon^N), 
	\\
	\tilde{\bP} = (\tilde{P}^1,\ldots , \tilde{P}^N), \quad \bP=(P^1,\ldots , P^N), \quad \bepsilon_T=(\epsilon^1_T,\ldots,\epsilon^N_T).
\end{align*}
Because $\tilde{X}$ and $X$ are driven by $\tilde{v}$ and $v^*$, respectively, studying the difference between $\tilde{v}$ and $v^*$ is equivalent to studying the difference between $\tilde{X}$ and $X$. In the following lemma, we estimate the distance of the perturbation $\tilde{\bP}$ in \eqref{eq:HSystem N agents penalized} from $\bP$ solving \eqref{eq:HSystem N agent}.

\begin{lemma}\label{lem:Ps bound}
Suppose that Assumptions \ref{hyp:L separability DS} and \ref{hyp:Lipschitz} hold. Let $(\bX,\bP)$ and $\varpi^N$ solve \eqref{eq:HSystem N agent}, and let $(\tilde{\bX},\tilde{\bP})$ and $\tilde{\varpi}^N$ satisfy \eqref{eq:HSystem N agents penalized} for some $\bepsilon:[0,T]\to \Rr^N$, $\bepsilon_T \in \Rr^N$, and $\epsilon_q:[0,T]\to\Rr$. Then
\begin{align}\label{eq:ABound 2}
	\|P^i - \tilde{P}^i\|_{L^2([0,T])}^2 & \leq 4T \bigg( (\mbox{Lip}(u_T'))^2 (X^i(T) - \tilde{X}^i(T))^2 + T (\mbox{Lip}(H_x))^2 \|X^i - \tilde{X}^i\|_{L^2([0,T])}^2  \nonumber
	\\
	& \quad\quad \quad \quad + (\epsilon^i_T)^2 + T\|\epsilon^i\|^2_2 \bigg)
\end{align}
for $1\leq i \leq N$.
\end{lemma}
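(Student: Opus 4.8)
The plan is to write an equation for the difference $P^i-\tilde P^i$, integrate it backwards in time from $T$ using the terminal conditions in \eqref{eq:HSystem N agent} and \eqref{eq:HSystem N agents penalized}, and then estimate each resulting term with the Lipschitz hypotheses and the Cauchy--Schwarz inequality.

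The first step, and the one that makes everything work, is to exploit Assumption \ref{hyp:L separability DS}: since $L$ is separable, $H(x,p)=\mathcal H(p)-V(x)$, so $H_x(x,p)=-V'(x)$ depends only on the space variable. Hence
\[
H_x(X^i(t),P^i(t)+\varpi^N(t)) - H_x(\tilde X^i(t),\tilde P^i(t)+\tilde\varpi^N(t)) = H_x(X^i(t),\cdot) - H_x(\tilde X^i(t),\cdot),
\]
whose modulus is at most $\mathrm{Lip}(H_x)\,|X^i(t)-\tilde X^i(t)|$ by Assumption \ref{hyp:Lipschitz}. The point is that neither the price perturbation $\varpi^N-\tilde\varpi^N$ nor the co-state perturbation $P^i-\tilde P^i$ enters this difference, so the estimate will close directly, with no term in $\|P^i-\tilde P^i\|$ on the right-hand side and no need for a Gr\"onwall argument.

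Next, I would subtract the equation for $\dot P^i$ in \eqref{eq:HSystem N agent} from the equation for $\dot{\tilde P}^i$ in \eqref{eq:HSystem N agents penalized} and integrate from $t$ to $T$. Using $P^i(T)=u'_T(X^i(T))$ and $\tilde P^i(T)=u'_T(\tilde X^i(T))-\epsilon^i_T$, this gives
\[
P^i(t)-\tilde P^i(t) = \big(u'_T(X^i(T))-u'_T(\tilde X^i(T))\big) + \epsilon^i_T - \int_t^T\!\big(H_x(X^i,\cdot)-H_x(\tilde X^i,\cdot)\big)\,ds + \int_t^T\!\epsilon^i(s)\,ds.
\]
Then I would apply the elementary bound $(a_1+a_2+a_3+a_4)^2\le 4(a_1^2+a_2^2+a_3^2+a_4^2)$, estimate the first term by $(\mathrm{Lip}(u'_T))^2|X^i(T)-\tilde X^i(T)|^2$, and bound each time integral by Cauchy--Schwarz, $\big(\int_t^T g\big)^2\le T\int_0^T g^2$, together with the Lipschitz bound on $H_x$. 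This produces, for every $t\in[0,T]$,
\[
|P^i(t)-\tilde P^i(t)|^2 \le 4\Big((\mathrm{Lip}(u'_T))^2|X^i(T)-\tilde X^i(T)|^2 + T(\mathrm{Lip}(H_x))^2\|X^i-\tilde X^i\|_{L^2([0,T])}^2 + (\epsilon^i_T)^2 + T\|\epsilon^i\|_2^2\Big).
\]
Since the right-hand side is independent of $t$, integrating over $[0,T]$ multiplies it by $T$ and yields exactly \eqref{eq:ABound 2}.

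I do not expect a genuine obstacle here; the only care needed is to invoke separability at the correct moment so that the estimate does not become circular. The remaining work — the triangle inequality, Cauchy--Schwarz on the time integrals, and the constant $4$ from splitting into four terms — is routine.
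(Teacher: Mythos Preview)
Your proposal is correct and follows essentially the same route as the paper's proof: both integrate the difference of the $\dot P^i$ equations from $t$ to $T$, use separability so that the $H_x$ term depends only on $X^i-\tilde X^i$, apply the Lipschitz bounds on $u_T'$ and $H_x$, and then square, use $(a+b+c+d)^2\le 4(\cdots)$ with Cauchy--Schwarz on the time integrals, and integrate over $[0,T]$. If anything, you spell out the squaring step and the role of separability more explicitly than the paper does.
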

\begin{proof}
Using \eqref{eq:HSystem N agent} and \eqref{eq:HSystem N agents penalized}, we have
\begin{align*}
	P^i(t) - \tilde{P}^i(t) & = u_T'(X^i(T))-u_T'(\tilde{X}^i(T)) -\epsilon_T^i
	\\
	& \quad + \int_t^T \bigg( \epsilon^i(s) - \left( H_x(X^i,P^i + \varpi^N) -H_x(\tilde{X}^i,\tilde{P}^i +\tilde{\varpi}^N) \right) \bigg) ds,
\end{align*}
for $1\leq i \leq N$. From the previous identity, and using Assumptions \ref{hyp:L separability DS} and \ref{hyp:Lipschitz}, we get
\begin{align*}
	|P^i(t) - \tilde{P}^i(t)| & \leq \mbox{Lip}(u_T') |X^i(T)-\tilde{X}^i(T)| + |\epsilon_T^i| + \int_0^T \bigg( |\epsilon^i(s)| + \mbox{Lip}(H_x) |X^i-\tilde{X}^i| \bigg) ds
\end{align*}
for $1\leq i \leq N$. Taking squares in the previous and integrating on $[0,T]$, we obtain the bound \eqref{eq:ABound 2} as stated.
\end{proof}

Next, using the previous lemma, we bound the distance of $(\tilde{\bX},\tilde{\bP})$ and $\tilde{\varpi}^N$ in \eqref{eq:HSystem N agents penalized} from $(\bX,\bP)$ and $\varpi^N$ solving \eqref{eq:HSystem N agent}.

\begin{proposition}\label{Prop:PricevsELError}
Suppose that Assumptions \ref{hyp:L uniformly convex}, \ref{hyp:uT convex DS}, \ref{hyp:L separability DS}, and \ref{hyp:Lipschitz} hold. 
Let $(\bX,\bP)$ and $\varpi^N$ solve \eqref{eq:HSystem N agent}, and let $(\tilde{\bX},\tilde{\bP})$ and $\tilde{\varpi}^N$ satisfy \eqref{eq:HSystem N agents penalized} for some $\bepsilon:[0,T]\to \Rr^N$, $\bepsilon_T \in \Rr^N$, and $\epsilon_q:[0,T]\to\Rr$. Then, there exists $C(T,H,u_T,N)>0$ such that 
\begin{align*}
	& \|\bP + \mathds{1}\varpi^N - (\tilde{\bP} + \mathds{1} \tilde{\varpi}^N)\|_{L^2([0,T])}^2 + \| \bX - \tilde{\bX} \|_{L^2([0,T])}^2  
	\\
	& \leq C \left( \|\bepsilon\|_{L^2([0,T])}^2 + |\bepsilon_T|^2 +\|\epsilon_q \|_{L^2([0,T])}^2 \right),
\end{align*}
where $\mathds{1} = (1,\ldots,1)\in \Rr^N$. 
\end{proposition}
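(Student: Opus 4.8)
The plan is to run a single ``monotonicity'' energy estimate for the two coupled Hamiltonian systems, read off from it a bound on the states and controls in terms of the residuals and the price error, and then absorb the price error through a separate pointwise estimate coming from the market-clearing equation; Lemma~\ref{lem:Ps bound} then upgrades these into a bound for the costates. First I record the consequences of separability (Assumption~\ref{hyp:L separability DS}): $H_p(x,p)=\mathcal{H}'(p)$ and $H_x(x,p)=-V'(x)$, so along \eqref{eq:HSystem N agent} we have $\dot X^i=v^i:=-\mathcal{H}'(P^i+\varpi^N)$, $\dot P^i=-V'(X^i)$ and $P^i+\varpi^N=-\ell'(v^i)$, and analogously for the tilded quantities in \eqref{eq:HSystem N agents penalized}, with the residuals $\epsilon^i,\epsilon^i_T$ appearing in the $\tilde P^i$-equations. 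Subtracting the two market-clearing equations and using that $H_p$ is independent of $x$ gives the scalar identity $\frac1N\sum_{i=1}^N\big(v^i(t)-\tilde v^i(t)\big)=-\epsilon_q(t)$. Since $X^i(0)=\tilde X^i(0)=x^i_0$ and $\dot X^i=v^i$, one has $|X^i(t)-\tilde X^i(t)|\le\sqrt T\,\|v^i-\tilde v^i\|_{L^2([0,T])}$ for every $t$, a bound I will use below; summing Lemma~\ref{lem:Ps bound} over $i$ gives $\sum_i\|P^i-\tilde P^i\|_{L^2}^2\le C'\big(\sum_i(X^i(T)-\tilde X^i(T))^2+\sum_i\|X^i-\tilde X^i\|_{L^2}^2\big)+C'\big(|\bepsilon_T|^2+\|\bepsilon\|_{L^2}^2\big)$, with $C'$ depending on $T$ and the Lipschitz constants of $u_T'$ and $H_x$.

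I then estimate $\Psi(t):=\sum_i\big(P^i(t)-\tilde P^i(t)\big)\big(X^i(t)-\tilde X^i(t)\big)$. Differentiating and using the ODEs, the convexity of $V$ (from Assumptions~\ref{hyp:L uniformly convex}, \ref{hyp:L separability DS}), the co-coercivity inequality $\big(\mathcal{H}'(a)-\mathcal{H}'(b)\big)(a-b)\ge\beta\big(\mathcal{H}'(a)-\mathcal{H}'(b)\big)^2$ for the smooth convex $\mathcal{H}$, the decomposition $P^i-\tilde P^i=\big((P^i+\varpi^N)-(\tilde P^i+\tilde\varpi^N)\big)-(\varpi^N-\tilde\varpi^N)$, and the identity of the previous paragraph, I get $\dot\Psi\le-\beta\sum_i(X^i-\tilde X^i)^2-\beta\sum_i(v^i-\tilde v^i)^2-\sum_i\epsilon^i(X^i-\tilde X^i)+N(\varpi^N-\tilde\varpi^N)\epsilon_q$. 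Integrating on $[0,T]$ with $\Psi(0)=0$, using the terminal conditions and convexity of $u_T$ (Assumption~\ref{hyp:uT convex DS}) to bound $\Psi(T)\ge\gamma_T\sum_i(X^i(T)-\tilde X^i(T))^2+\sum_i\epsilon^i_T(X^i(T)-\tilde X^i(T))$, and Young's inequality on the $\epsilon^i$, $\epsilon^i_T$ terms, yields, with $C=C(\beta,\gamma_T)$,
\[
\sum_i\|v^i-\tilde v^i\|_{L^2}^2+\sum_i\|X^i-\tilde X^i\|_{L^2}^2+\sum_i(X^i(T)-\tilde X^i(T))^2\le C\Big(N\!\int_0^T(\varpi^N-\tilde\varpi^N)\epsilon_q\,dt+|\bepsilon_T|^2+\|\bepsilon\|_{L^2}^2\Big).
\]
(This is the $N$-player version of the estimate underlying Proposition~\ref{Prop:FirstBound}, but carrying the useful boundary term.)

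The price error is handled from the market-clearing equation. Writing $\mathcal{H}'(P^i+\varpi^N)-\mathcal{H}'(\tilde P^i+\tilde\varpi^N)=\theta^i(t)\big[(P^i-\tilde P^i)+(\varpi^N-\tilde\varpi^N)\big]$ with $\theta^i(t)\in[c,\beta^{-1}]$, where $c>0$ is a uniform lower bound for $\mathcal{H}''$ along the trajectories, and using $\mathcal{H}'(P^i+\varpi^N)-\mathcal{H}'(\tilde P^i+\tilde\varpi^N)=\tilde v^i-v^i$ together with the balance identity, one gets $\epsilon_q=\frac1N\sum_i\theta^i(P^i-\tilde P^i)+(\varpi^N-\tilde\varpi^N)\frac1N\sum_i\theta^i$; solving for $\varpi^N-\tilde\varpi^N$, using $\frac1N\sum_i\theta^i\ge c$ and that a weighted average is dominated by the maximum, gives $|\varpi^N(t)-\tilde\varpi^N(t)|\le\max_i|P^i(t)-\tilde P^i(t)|+c^{-1}|\epsilon_q(t)|$, hence $\|\varpi^N-\tilde\varpi^N\|_{L^2}^2\le 2\sum_i\|P^i-\tilde P^i\|_{L^2}^2+2c^{-2}\|\epsilon_q\|_{L^2}^2$. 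Now I close the loop: by Cauchy--Schwarz and this bound, $N\int_0^T(\varpi^N-\tilde\varpi^N)\epsilon_q\,dt\le N\big(\sum_i\|P^i-\tilde P^i\|_{L^2}^2\big)^{1/2}\|\epsilon_q\|_{L^2}+Nc^{-1}\|\epsilon_q\|_{L^2}^2$; substituting the bound on $\sum_i\|P^i-\tilde P^i\|_{L^2}^2$ from the first paragraph into this and then into the displayed estimate above, and applying Young's inequality with a small parameter (depending on $C$, $C'$, $N$) to absorb $\sum_i(X^i(T)-\tilde X^i(T))^2$ and $\sum_i\|X^i-\tilde X^i\|_{L^2}^2$ into the left-hand side, gives $\sum_i\|v^i-\tilde v^i\|_{L^2}^2+\sum_i\|X^i-\tilde X^i\|_{L^2}^2+\sum_i(X^i(T)-\tilde X^i(T))^2\le C(T,H,u_T,N)\big(\|\bepsilon\|_{L^2}^2+|\bepsilon_T|^2+\|\epsilon_q\|_{L^2}^2\big)$. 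Feeding this back gives the same bound for $\sum_i\|P^i-\tilde P^i\|_{L^2}^2$ (Lemma~\ref{lem:Ps bound}) and for $\|\varpi^N-\tilde\varpi^N\|_{L^2}^2$; then, since $P^i+\varpi^N-(\tilde P^i+\tilde\varpi^N)=(P^i-\tilde P^i)+(\varpi^N-\tilde\varpi^N)$, also for $\|\bP+\mathds{1}\varpi^N-(\tilde{\bP}+\mathds{1}\tilde{\varpi}^N)\|_{L^2}^2$, while $\|\bX-\tilde{\bX}\|_{L^2}^2=\sum_i\|X^i-\tilde X^i\|_{L^2}^2$ is already bounded --- which is the assertion.

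I expect the main obstacle to be the price estimate, and within it the availability of the uniform lower bound $c>0$ for $\mathcal{H}''$ along the trajectories: this is what makes the market-clearing map $p\mapsto\frac1N\sum_i\mathcal{H}'(P^i(t)+p)$ uniformly strictly increasing, hence invertible with a Lipschitz inverse, and so converts the market-clearing residual $\epsilon_q$ into $L^2$-control of the price error --- without it the price could respond arbitrarily strongly to a small balance residual. Since $\mathcal{H}$ is $C^2$ and strictly convex (the Legendre transform of the uniformly convex $\ell$), such a $c$ exists as soon as the states and controls --- hence the arguments of $\mathcal{H}'$ --- remain in a fixed bounded set, which holds for the exact $N$-player solution by the a priori bounds behind \cite{gomes2018mean} and for the approximate solution as a controlled trajectory with controlled residuals; the Lipschitz hypotheses (Assumptions~\ref{hyp:Lipschitz}, \ref{hyp:Lipschitz 2}) then supply the remaining quantitative constants. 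A secondary point is the absorption: the price error re-enters the energy estimate, but only multiplied by the prescribed small residual $\epsilon_q$, so one application of Young's inequality closes the argument with no restriction on $T$, the $T$-dependence of the final constant entering only through $C'$ and $c$.
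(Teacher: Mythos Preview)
Your proof is correct and reaches the conclusion by a genuinely different route from the paper. The paper runs the energy estimate directly on the costate quantity $\|P^i+\varpi^N-(\tilde P^i+\tilde\varpi^N)\|_{L^2}^2$, using strong convexity of $\mathcal H$ in $p$ (which the paper asserts from Assumptions~\ref{hyp:L uniformly convex} and~\ref{hyp:L separability DS} with a global constant $\gamma_p$); when the price cross-term $\int N\epsilon_q(\varpi^N-\tilde\varpi^N)\,dt$ appears after summing over $i$, the paper simply writes $N(\varpi^N-\tilde\varpi^N)=\sum_i\big[(P^i+\varpi^N-(\tilde P^i+\tilde\varpi^N))-(P^i-\tilde P^i)\big]$, absorbs the first piece into the left-hand side by choosing $\delta_3>\tfrac{1}{2\gamma_p}$, and controls the second via Lemma~\ref{lem:Ps bound}. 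No pointwise inversion of the market-clearing relation is ever needed.

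Your route instead runs the energy estimate on the velocities via co-coercivity of $\mathcal H'$ --- which needs only that $\ell$ be strongly convex, a strictly weaker input than strong convexity of $\mathcal H$ --- but then must separately bound $|\varpi^N-\tilde\varpi^N|$ pointwise by inverting the balance condition, and it is precisely there that the lower bound $c$ on $\mathcal H''$ re-enters. In the paper's framing this $c$ is just the global constant $\gamma_p$ attached to $H$, so your caution about a priori trajectory bounds is unnecessary and your final $C$ depends only on $T,H,u_T,N$ as stated; without accepting a global $\gamma_p$, however, your justification of $c$ through boundedness of the approximate trajectory would be circular, since that boundedness is part of what you are trying to establish. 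What your approach buys is a cleaner energy step with a weaker hypothesis at that stage; what the paper's buys is a single self-contained estimate with no detour through inverting the market-clearing map, and the add--subtract trick makes the absorption of the price term immediate.
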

In the above result, to be specific, $C(T,H,u_T,N)$ depends on the Lipschitz constants of $u'_T$ and of the partial derivative of $H$ w.r.t. $x$.
\begin{proof}
Let us write
\[
	\|\cdot\|_{L^2([0,T])} = \|\cdot \|_2.
\]
By Assumptions \ref{hyp:L uniformly convex} and \ref{hyp:L separability DS}, $H$ given by \eqref{eq:Legendre transform} is uniformly convex in $p$; that is,
\[
	\frac{\gamma_p}{2}(p-q)^2 \leq \left( H_p(x,p) - H_p(x,q) \right)(p-q)
\]
for some $\gamma_p >0$, and strongly concave in $x$; that is, 
\[
	\frac{\gamma_x}{2}(x-y)^2 \leq -\left( H_x(x,p) - H_x(y,p) \right)(x-y)
\]
for some $\gamma_x >0$. Using the previous two inequalities, \eqref{eq:HSystem N agent}, and \eqref{eq:HSystem N agents penalized}, we get
\begin{align*}
	& \frac{\gamma_p}{2}\| P^i + \varpi^N - (\tilde{P}^i +\tilde{\varpi}^N) \|_2^2 + \frac{\gamma_x}{2}\|X^i - \tilde{X}^i\|_2^2 
	\\
	& \leq \int_0^T \bigg(\left( H_p(X^i,P^i + \varpi^N) -H_p(\tilde{X}^i,\tilde{P}^i + \tilde{\varpi}^N) \right) (P^i + \varpi^N - (\tilde{P}^i +\tilde{\varpi}^N)) 
	\\
	& \quad \quad \quad \quad - \left( H_x(X^i,P^i + \varpi^N) -H_x(\tilde{X}^i,\tilde{P}^i + \tilde{\varpi}^N) \right) (X^i -  \tilde{X}^i) \bigg)dt
	\\
	& = \int_0^T \bigg( -\left( \dot{X}^i - \dot{\tilde{X}}^i \right) ( P^i - \tilde{P}^i) -\left( \dot{X}^i - \dot{\tilde{X}}^i \right) ( \varpi^N -\tilde{\varpi}^N)
	\\
	& \quad \quad \quad \quad -\left( \dot{P}^i - \dot{\tilde{P}}^i + \epsilon^i \right) ( X^i - \tilde{X}^i) \bigg) dt
	\\
	& = \int_0^T \bigg( - \frac{d}{dt}\left( ( X^i - \tilde{X}^i) ( P^i - \tilde{P}^i) \right) -\left( \dot{X}^i - \dot{\tilde{X}}^i \right) ( \varpi^N -\tilde{\varpi}^N) - \epsilon^i( X^i - \tilde{X}^i) \bigg) dt
	\\
	& = -( X^i(T) - \tilde{X}^i(T))( P^i(T) - \tilde{P}^i(T)) - \int_0^T \bigg(  \left( \dot{X}^i - \dot{\tilde{X}}^i \right) ( \varpi^N -\tilde{\varpi}^N) + \epsilon^i( X^i - \tilde{X}^i) \bigg) dt
	\\
	& = -( X^i(T) - \tilde{X}^i(T))( u_T'(X^i(T)) - u_T'(\tilde{X}^i(T))-\epsilon_T^i) 
	\\
	& \quad - \int_0^T \bigg(  \left( \dot{X}^i - \dot{\tilde{X}}^i \right) ( \varpi^N -\tilde{\varpi}^N) + \epsilon^i( X^i - \tilde{X}^i) \bigg) dt,
\end{align*}
for $1\leq i \leq N$. By Assumption \ref{hyp:uT convex DS}, 
%Let $u_T$ be strongly convex; that is, 
%\[
%	\frac{\gamma_T}{2}(x-y)^2 \leq (u_T'(x)-u_T'(y))(x-y)
%\]
%for some $\gamma_T >0$. 
the previous inequality gives
\begin{align*}
	& \frac{\gamma_p}{2}\| P^i + \varpi^N - (\tilde{P}^i +\tilde{\varpi}^N) \|_2^2 + \frac{\gamma_x}{2}\|X^i - \tilde{X}^i\|_2^2  \nonumber
	\\
	& \leq -\frac{\gamma_T}{2} ( X^i(T) - \tilde{X}^i(T))^2 + \epsilon^i_T ( X^i(T) - \tilde{X}^i(T)) \nonumber
	\\
	& \quad - \int_0^T \bigg(  \left( \dot{X}^i - \dot{\tilde{X}}^i \right) ( \varpi^N -\tilde{\varpi}^N) + \epsilon^i( X^i - \tilde{X}^i) \bigg) dt \nonumber
	\\
	& \leq \left( -\frac{\gamma_T}{2} + \frac{1}{4 \delta_1} \right) ( X^i(T) - \tilde{X}^i(T))^2 + \delta_1 (\epsilon_T^i)^2 \nonumber
	\\
	& \quad - \int_0^T \bigg(  \left( \dot{X}^i - \dot{\tilde{X}}^i \right) ( \varpi^N -\tilde{\varpi}^N) + \epsilon^i( X^i - \tilde{X}^i) \bigg) dt	
\end{align*}
for $\delta_1>0$ to be selected. Adding the previous inequality over $i$, and using the third equation in \eqref{eq:HSystem N agent} and \eqref{eq:HSystem N agents penalized}, we get
\begin{align*}
	& \frac{\gamma_p}{2}\| \bP + \mathds{1}\varpi^N - (\tilde{\bP} +\mathds{1} \tilde{\varpi}^N) \|_2^2 + \frac{\gamma_x}{2}\|\bX - \tilde{\bX}\|_2^2  \nonumber
	\\
	& \leq \left( -\frac{\gamma_T}{2} + \frac{1}{4 \delta_1} \right) |\bX(T) - \tilde{\bX}(T)|^2 + \delta_1 |\epsilon_T|^2 \nonumber
	\\
	& \quad + \int_0^T N \epsilon_q ( \varpi^N -\tilde{\varpi}^N) dt	 + \delta_2 \|\bepsilon\|_2^2 + \frac{1}{4\delta_2}\|\bX-\tilde{\bX}\|_2^2
\end{align*}
for some $\delta_2>0$ to be selected. By adding and subtracting $P^i-\tilde{P}^i$, we write the previous inequality as
\begin{align*}
	& \frac{\gamma_p}{2}\| \bP + \mathds{1}\varpi^N - (\tilde{\bP} +\mathds{1} \tilde{\varpi}^N) \|_2^2 + \frac{\gamma_x}{2}\|\bX - \tilde{\bX}\|_2^2  \nonumber
	\\
	& \leq \left( -\frac{\gamma_T}{2} + \frac{1}{4 \delta_1} \right) |\bX(T) - \tilde{\bX}(T)|^2 + \delta_1 |\epsilon_T|^2 \nonumber
	\\
	& \quad + \int_0^T \epsilon_q \left( \sum_{i=1}^N \left( P^i + \varpi^N -(\tilde{P}^i + \tilde{\varpi}^N) \right) - \sum_{i=1}^N (P^i-\tilde{P}^i) \right) dt	 + \delta_2 \|\bepsilon\|_2^2 + \frac{1}{4\delta_2}\|\bX-\tilde{\bX}\|_2^2
	\\
	& \leq \left( -\frac{\gamma_T}{2} + \frac{1}{4 \delta_1} \right) |\bX(T) - \tilde{\bX}(T)|^2 + \delta_1 |\epsilon_T|^2 \nonumber
	\\
	& \quad + \sum_{i=1}^N \left( \delta_3 \|\epsilon_q\|_2^2 + \frac{1}{4 \delta_3} \|P^i + \varpi^N -(\tilde{P}^i + \tilde{\varpi}^N)\|_2^2 \right) 
	\\
	& \quad + \sum_{i=1}^N \left( \delta_4 \|\epsilon_q\|_2^2 + \frac{1}{4 \delta_4} \|P^i-\tilde{P}^i\|_2^2 \right) +\delta_2 \|\bepsilon\|_2^2 + \frac{1}{4\delta_2}\|\bX-\tilde{\bX}\|_2^2	
\end{align*}
for some $\delta_3,\delta_4>0$ to be selected. By Lemma \ref{lem:Ps bound}, we use the estimate \eqref{eq:ABound 2} in the previous inequality to obtain
\begin{align*}
	& \frac{\gamma_p}{2}\| \bP + \mathds{1}\varpi^N - (\tilde{\bP} +\mathds{1} \tilde{\varpi}^N) \|_2^2 + \frac{\gamma_x}{2}\|\bX - \tilde{\bX}\|_2^2  \nonumber
	\\
	& \leq \left( -\frac{\gamma_T}{2} + \frac{1}{4 \delta_1} + \frac{T}{\delta_4}(\mbox{Lip}(u'_T))^2\right) |\bX(T) - \tilde{\bX}(T)|^2 + \left( \delta_1 +\frac{T}{\delta_4}\right)|\epsilon_T|^2 \nonumber
	\\
	& \quad + \left( N \delta_3 +N \delta_4\right) \|\epsilon_q\|_2^2 + \frac{1}{4 \delta_3} \|\bP + \mathds{1}\varpi^N -(\tilde{P} + \mathds{1}\tilde{\varpi}^N)\|_2^2 
	\\
	& \quad +\left( \delta_2 + \frac{T^2}{\delta_4} \right) \|\bepsilon\|_2^2 + \left( \frac{1}{4\delta_2} + \frac{T^2}{\delta_4}(\mbox{Lip}(H_x))^2\right)\|\bX-\tilde{\bX}\|_2^2.	
\end{align*}
Grouping terms in the previous inequality, we get
\begin{align}\label{eq:ABound 3}
	& \left(\frac{\gamma_p}{2}-\frac{1}{4 \delta_3}\right) \| \bP + \mathds{1}\varpi^N - (\tilde{\bP} +\mathds{1} \tilde{\varpi}^N) \|_2^2 + \left( \frac{\gamma_x}{2}- \frac{1}{4\delta_2} - \frac{T^2}{\delta_4}(\mbox{Lip}(H_x))^2\right) \|\bX - \tilde{\bX}\|_2^2  \nonumber
	\\
	& \leq \left( -\frac{\gamma_T}{2} + \frac{1}{4 \delta_1} + \frac{T}{\delta_4}(\mbox{Lip}(u'_T))^2\right) |\bX(T) - \tilde{\bX}(T)|^2 + \left( \delta_1 +\frac{T}{\delta_4}\right)|\epsilon_T|^2 \nonumber
	\\
	& \quad + N\left(  \delta_3 + \delta_4\right) \|\epsilon_q\|_2^2 +\left( \delta_2 + \frac{T^2}{\delta_4} \right) \|\bepsilon\|_2^2.
\end{align}
Let $\delta>0$ satisfy $\gamma_T > 2 \delta$, and set
\[
	\delta_1 = \frac{1}{2(\gamma_T - 2 \delta)}, \quad \delta_2 > \frac{1}{2\left(\gamma_x - 2 T \delta \left(\frac{\mbox{Lip}(H_x)}{\mbox{Lip}(u'_T)} \right)^2\right)}, \quad \delta_3 > \frac{1}{2\gamma_p}, \quad \delta_4 = \frac{T}{\delta}(\mbox{Lip}(u'_T))^2.
\]
Then, with the previous selection, the constants in \eqref{eq:ABound 3} satisfy
\begin{align*}
	& \left(\frac{\gamma_p}{2}-\frac{1}{4 \delta_3}\right)>0, \quad \left( \frac{\gamma_x}{2}- \frac{1}{4\delta_2} - \frac{T^2}{\delta_4}(\mbox{Lip}(H_x))^2\right) > 0, 
	\\
	& \left( -\frac{\gamma_T}{2} + \frac{1}{4 \delta_1} + \frac{T}{\delta_4}(\mbox{Lip}(u'_T))^2\right) = 0.
\end{align*}
Let $C_1 = \min \left\{ \left(\frac{\gamma_p}{2}-\frac{1}{4 \delta_3}\right) , \; \left( \frac{\gamma_x}{2}- \frac{1}{4\delta_2} - \delta T\left( \frac{\mbox{Lip}(H_x)}{\mbox{Lip}(u'_T)}\right)^2\right) \right\}$. Then, \eqref{eq:ABound 3} gives
\begin{align}\label{eq:ABound 4}
	& \| \bP + \mathds{1}\varpi^N - (\tilde{\bP} +\mathds{1} \tilde{\varpi}^N) \|_2^2 + \|\bX - \tilde{\bX}\|_2^2  \nonumber
	\\
	& \leq \frac{1}{C_1} \left( \left( \frac{1}{2(\gamma_T - 2\delta)} +\frac{\delta}{(\mbox{Lip}(u'_T))^2}\right)|\epsilon_T|^2 + N\left(  \delta_3 + \frac{T}{\delta}(\mbox{Lip}(u'_T))^2\right) \|\epsilon_q\|_2^2 \right. \nonumber
	\\
	& \quad\quad\quad\; \left. + \left( \delta_2 + \frac{\delta T}{(\mbox{Lip}(u'_T))^2} \right) \|\bepsilon\|_2^2\right).
\end{align}
The result follows from \eqref{eq:ABound 4}.
\end{proof}

\begin{proof}[Proof of Theorem \ref{Prop:Main result}]
Let $(\bX,\bP)$ and $\varpi^N$ solve \eqref{eq:HSystem N agent}, and let $(\tilde{\bX},\tilde{\bP})$ and $\tilde{\varpi}^N$ satisfy \eqref{eq:HSystem N agents penalized} for $\bepsilon:[0,T]\to \Rr^N$, $\bepsilon_T \in \Rr^N$, and $\epsilon_q:[0,T]\to\Rr$. By triangle inequality,
\begin{equation*}
%\label{eq:Triangle 1}
	\frac{1}{2}\| \varpi^N - \tilde{\varpi}^N \|_2^2  \leq \|P^i + \varpi^N - (\tilde{P}^i + \tilde{\varpi}^N) \|_2^2 + \|P^i - \tilde{P}^i\|_2^2,
\end{equation*}
for $1 \leq i \leq N$. Adding the previous inequalities over $i$, and using Lemma \ref{lem:Ps bound} and Proposition \ref{Prop:PricevsELError}, we have 
%\begin{align*}
%%\label{eq:Triangle 1}
%	\frac{N}{2}\| \varpi^N - \tilde{\varpi}^N \|_2^2  & \leq \|\bP + \mathds{1}\varpi^N - (\tilde{\bP} + \mathds{1}\tilde{\varpi}^N) \|_2^2 + \|\bP - \tilde{\bP}\|_2^2
%	\\
%	& \leq C \left( \|\bepsilon\|_{L^2([0,T])}^2 + |\bepsilon_T|^2 +\|\epsilon_q \|_{L^2([0,T])}^2 \right) 
%	\\
%	& \quad + 4T \bigg( (\mbox{Lip}(u_T'))^2 |\bX(T) - \tilde{\bX}(T)|^2 + |\bepsilon_T|^2 + T\|\bepsilon\|^2_2 \bigg)
%	\\
%	& \leq 4T^2 (\mbox{Lip}(H_x))^2 C \left( \|\bepsilon\|_{L^2([0,T])}^2 + |\bepsilon_T|^2 +\|\epsilon_q \|_{L^2([0,T])}^2 \right) 
%	\\
%	& \quad + C \left( \|\bepsilon\|_{L^2([0,T])}^2 + |\bepsilon_T|^2 +\|\epsilon_q \|_{L^2([0,T])}^2 \right)
%	\\
%	& \quad + 4T \bigg( (\mbox{Lip}(u_T'))^2 |\bX(T) - \tilde{\bX}(T)|^2 + |\bepsilon_T|^2 + T\|\bepsilon\|^2_2 \bigg)
%\end{align*}
%\begin{align*}
%%\label{eq:Triangle 1}
%	\frac{N}{2}\| \varpi^N - \tilde{\varpi}^N \|_2^2  &  \leq (4T^2 (\mbox{Lip}(H_x))^2+1) C \left( \|\bepsilon\|_{L^2([0,T])}^2 + |\bepsilon_T|^2 +\|\epsilon_q \|_{L^2([0,T])}^2 \right) 
%	\\
%	& \quad + 4T \mbox{Lip}(u_T'))^2 |\bX(T) - \tilde{\bX}(T)|^2  +4T |\bepsilon_T|^2 + 4T^2\|\bepsilon\|^2_2 
%\end{align*}
\begin{align}
\label{eq:Final bound 1}
	\frac{N}{2}\| \varpi^N - \tilde{\varpi}^N \|_2^2  &  \leq \bigg((4T^2 (\mbox{Lip}(H_x))^2+1) C + 4T^2 \bigg) \|\bepsilon\|_{L^2([0,T])}^2 \nonumber
	\\
	& \quad + \bigg((4T^2 (\mbox{Lip}(H_x))^2+1) C + 4T \bigg) |\bepsilon_T|^2  \nonumber
	\\
	& \quad + (4T^2 (\mbox{Lip}(H_x))^2+1) C \|\epsilon_q \|_{L^2([0,T])}^2  \nonumber
	\\
	& \quad + 4T \mbox{Lip}(u_T'))^2 |\bX(T) - \tilde{\bX}(T)|^2.
\end{align}
By Assumptions \ref{hyp:L separability DS} and \ref{hyp:Lipschitz 2}, using \eqref{eq:HSystem N agent} and \eqref{eq:HSystem N agents penalized}, we have
\begin{align*}
%\label{eq:Triangle 1}
	|X^i(T)-\tilde{X}^i(T)|^2 & \leq T (\mbox{Lip}(H_p))^2 \|P^i + \varpi^N - (\tilde{P}^i + \tilde{\varpi}^N)\|_{L^2([0,T])}^2,
\end{align*}
for $1 \leq i \leq N$. Adding the previous inequalities over $i$, and using Proposition \ref{Prop:PricevsELError}, we get
\begin{align*}
%\label{eq:Triangle 1}
	|\bX(T)-\tilde{\bX}(T)|^2 & \leq T (\mbox{Lip}(H_p))^2 \|\bP + \mathds{1}\varpi^N - (\tilde{\bP} + \mathds{1}\tilde{\varpi}^N)\|_{L^2([0,T])}^2
	\\
	& \leq T (\mbox{Lip}(H_p))^2 C \left( \|\bepsilon\|_{L^2([0,T])}^2 + |\bepsilon_T|^2 +\|\epsilon_q \|_{L^2([0,T])}^2 \right).
\end{align*}
The result follows from the previous inequality and \eqref{eq:Final bound 1}.
%\begin{align*}
%%\label{eq:Triangle 1}
%	\frac{N}{2}\| \varpi^N - \tilde{\varpi}^N \|_2^2  &  \leq \bigg((4T^2 (\mbox{Lip}(H_x))^2+1) C + 4T^2 \bigg) \|\bepsilon\|_{L^2([0,T])}^2 
%	\\
%	& \quad + \bigg((4T^2 (\mbox{Lip}(H_x))^2+1) C + 4T \bigg) |\bepsilon_T|^2 
%	\\
%	& \quad + (4T^2 (\mbox{Lip}(H_x))^2+1) C \|\epsilon_q \|_{L^2([0,T])}^2 
%	\\
%	& \quad + 4T^2 \mbox{Lip}(u_T'))^2 (\mbox{Lip}(H_p))^2 C \left( \|\bepsilon\|_{L^2([0,T])}^2 + |\bepsilon_T|^2 +\|\epsilon_q \|_{L^2([0,T])}^2 \right) 
%\end{align*}
%
%\begin{align*}
%%\label{eq:Triangle 1}
%	\frac{N}{2}\| \varpi^N - \tilde{\varpi}^N \|_2^2  &  \leq \bigg((4T^2 (\mbox{Lip}(H_x))^2+1) C + 4T^2 +4T^2 \mbox{Lip}(u_T'))^2 (\mbox{Lip}(H_p))^2 C \bigg) \|\bepsilon\|_{L^2([0,T])}^2 
%	\\
%	& \quad + \bigg((4T^2 (\mbox{Lip}(H_x))^2+1) C + 4T +4T^2 \mbox{Lip}(u_T'))^2 (\mbox{Lip}(H_p))^2 C \bigg) |\bepsilon_T|^2 
%	\\
%	& \quad + (4T^2 (\mbox{Lip}(H_x))^2+1 + 4T^2 \mbox{Lip}(u_T'))^2 (\mbox{Lip}(H_p))^2 C) C \|\epsilon_q \|_{L^2([0,T])}^2 
%\end{align*}
%
%\begin{align*}
%%\label{eq:Triangle 1}
%	\| \varpi^N - \tilde{\varpi}^N \|_2^2  &  \leq CC \frac{2}{N} \bigg( \|\bepsilon\|_{L^2([0,T])}^2 + |\bepsilon_T|^2 + \|\epsilon_q \|_{L^2([0,T])}^2 \bigg)
%\end{align*}
\end{proof}

\begin{remark}
Let $\varpi^N$ solve \eqref{eq:HSystem N agent}, $\tilde{\varpi}^N$ satisfy \eqref{eq:HSystem N agents penalized}, and $\varpi$ solve \eqref{eq:MFG system}. By triangle inequality,
\begin{equation*}
	\| \varpi - \tilde{\varpi}^N \|_2  \leq \| \varpi - \varpi^N\|_2 + \|\varpi^N - \tilde{\varpi}^N \|_2.
\end{equation*}
The first term on the right-hand side of the previous inequality relates to the convergence of the $N$-player price to the MFG price, while the second term relates to the results of  Lemma \ref{lem:Ps bound} and Proposition \ref{Prop:PricevsELError}. In our numerical method, a NN provides $\tilde{\varpi}^N$ for a fixed population of size $N$. The population changes as we train the NN, and we regard $\tilde{\varpi}^N$ as an approximation of $\varpi$. Therefore, the a posteriori estimates in Lemma \ref{lem:Ps bound} and Proposition \ref{Prop:PricevsELError}, together with the rate of convergence of the finite to the continuum game are essential to properly assess the quality in our approximation of $\varpi$ in \eqref{eq:MFG system} using the NN approach we propose. Because the convergence of finite population games to MFG is a matter of its own, we plan to study this convergence for the price formation problem in a separate work.
\end{remark}

\begin{remark}
Instead of adopting the particle approximation to estimate the distance of $\tilde{v}$ and $\tilde{\varpi}$ from $v^*$ and $\varpi$, respectively, we can adopt a continuous approach. For instance, using Wasserstein metrics, we can estimate the distance between $\tilde{m}$ solving \eqref{eq:Auxiliary transport eq} and $m$ solving the continuity equation in \eqref{eq:MFG system} in terms of the vector fields $\tilde{v}$ and $v^*$, the latter given by \eqref{eq:optimal feedback}. However, as we present in Section \ref{sec:NN for price}, the ML approach we propose utilizes a finite number of particle trajectories to optimize the parameters of the NNs, providing the approximations of $v^*$ and $\varpi$. Therefore, the particle approach analysis we adopt is better suited to the convergence analysis of our  numerical method.
\end{remark}

\begin{remark}
As we proved in Propositions \ref{Prop:FirstBound} and \ref{Prop:PricevsELError}, the residual that a perturbation incurs in a necessary condition is related to how far the perturbation is from the optimizer. A further consideration is whether the perturbed necessary condition characterizes an underlying perturbed optimization problem. Using the concept of Metric regularity (see \cite{MetricRegBook}, Chapter 3), it is possible to estimate the distance between a solution and a perturbation in terms of the residual the perturbation incurs in a condition characterizing solutions, such as a set inclusion or an Euler-Lagrange equation. For instance, in the context of linear optimal control, \cite{Quin1} interpreted a perturbed Pontryagin principle as the first-order condition of a perturbed optimization problem. In our case, assume that $\mathcal{E}: [0,T] \to \Rr$ satisfies $\dot{\mathcal{E}} = \epsilon$ and $\mathcal{E}(T) = -\epsilon_T$. Then, we can write \eqref{eq:EL single agent residual} as
\begin{equation}\label{eq:EL single agent alternative}
	\begin{cases}
	L_x(\tilde{X}(t),\tilde{v}(t,\tilde{X}(t)))-\frac{d}{dt} \left( L_v(\tilde{X}(t),\tilde{v}(t,\tilde{X}(t))) + \tilde{\varpi}(t)+\mathcal{E}(t)\right) = 0 &  t \in [0,T],
	\\
	L_v(\tilde{X}(T),\tilde{v}(T,\tilde{X}(T))) +\tilde{\varpi}(T) +\mathcal{E}(T)+ u_T'\left(\tilde{X}(T)\right)=0.
	\end{cases}
\end{equation}
Moreover, we can regard the right-hand side of  \eqref{eq:Balance residual} as a perturbation of the supply function $Q$; that is,
\begin{equation}\label{eq:Balance residual alternative}
	\int_{\Rr} \tilde{v}(t,x)\tilde{m}(t,x) \dx = \tilde{Q}(t), \quad t \in [0,T],
\end{equation}
where $\tilde{Q}=Q+\epsilon_q$. Then, we can consider if \eqref{eq:EL single agent alternative} and \eqref{eq:Balance residual alternative} characterize a price formation problem for the supply $\tilde{Q}$. In such case, Propositions \ref{Prop:FirstBound} and \ref{Prop:PricevsELError} allow further studying of the stability of solutions of \eqref{eq:MFG system} w.r.t. perturbation in the supply function $Q$. However, there is no guarantee that the duality relation established by \eqref{eq:Legendre transform}, which defines the price formation system \eqref{eq:MFG system}, holds for the perturbations $\tilde{v}$ and $\tilde{\varpi}$; that is, the perturbations are not necessarily solving an underlying mean-field game problem. 
\end{remark}

\section{Neural Networks for price formation models}
\label{sec:NN for price}
Here, we introduce the two architectures we use, which are based on the multi-layer perceptron (MLP) and the Recurrent Neural Network (RNN) structures. Both turn out to have a recurrent structure, as explained below. The architectures provide an approximation for $v^*$ and $\varpi$. Further material related to NN and ML can be found in \cite{HighamML}. We close this section with the numerical formulation of the loss function and the a posteriori estimate, which we implement in Section \ref{sec: Numerical results}.

\subsection{Neural Network architectures}
In this part, we introduce the notation for NN and elaborate on the two architectures we study. Each architecture is composed of two NN, one approximating $v^*$ and the other approximating $\varpi$. In the first configuration, we use a MLP that takes as input the current time and state of a player and returns $v^*$. Thus, it behaves as an approximation to the right-hand side of \eqref{eq:optimal feedback}. The price is approximated using another MLP. In the second configuration, we use two RNNs that encode the history of the supply up to time $t$ by keeping along the way an auxiliary state that encodes the relevant information about the past. In particular, it provides a non-anticipating control. While the price problem admits Markovian controls, like the first option, these are contained in the set of non-anticipating controls, like those obtained using RNN. Although these two architectures should have equivalent results for deterministic problems, this is not the case when the supply is random (common noise problem).

\subsubsection{General RNN architectures}
The RNN is a class of NN suitable when the input variable $\mathrm{x}$ is a temporal sequence. For instance, time discretization with $M$ time steps of the supply provides a temporal sequence $\mathrm{Q}=\left(\mathrm{Q}^{\langle 0 \rangle },\ldots,\mathrm{Q}^{\langle M \rangle }\right)$. The architecture of a single cell specifies the RNN. This cell is iterated over the time steps of the temporal sequence. Using a hidden state $\mathrm{h}$, the RNN connects layers in the temporal direction. The cell of the RNN consists of a MLP, which is a fully connected feed-forward NN of $L$ layers. Each layer $j$, $1\leq j \leq L$, is specified by a weight matrix $\mathrm{W}^{[j]}$ and a bias vector $\mathrm{b}^{[j]}$. Their dimensions depend on the number of neurons $n^{[j]}$ in that layer. We denote the MLP parameters by $\Theta$; that is, 
\[
	\Theta=\left(\mathrm{W}^{[1]},\mathrm{b}^{[1]},\ldots,\mathrm{W}^{[L]},\mathrm{b}^{[L]}\right).
\]
The output of layer $j$, which we denote by $\mathrm{a}^{[j]}$, is given by
\begin{equation*}%\label{eq:NN feedforward}
	\mathrm{a}^{[j]}=\sigma^{[j]}\left(\mathrm{W}^{[j]} \mathrm{a}^{[j-1]}+\mathrm{b}^{[j]}\right),\quad 1\leq j \leq L, \quad a^{[0]}=\mathrm{x},
\end{equation*}
where $\sigma^{[j]}$ is the activation function of layer $j$ applied coordinate-wise, and $\mathrm{x}$ denotes the input variable for the MLP that defines the cell. For instance, Figure \ref{fig:Standard RNN iteration} depicts the iteration of a standard RNN cell consisting of a MLP with three layers. In all architectures, the hidden state $\mathrm{h}$ is initialized by taking $\mathrm{h}^{\langle -1 \rangle}$ to be zero. The blue arrows highlight the connection in the temporal direction obtained by using the hidden state $\mathrm{h}$.

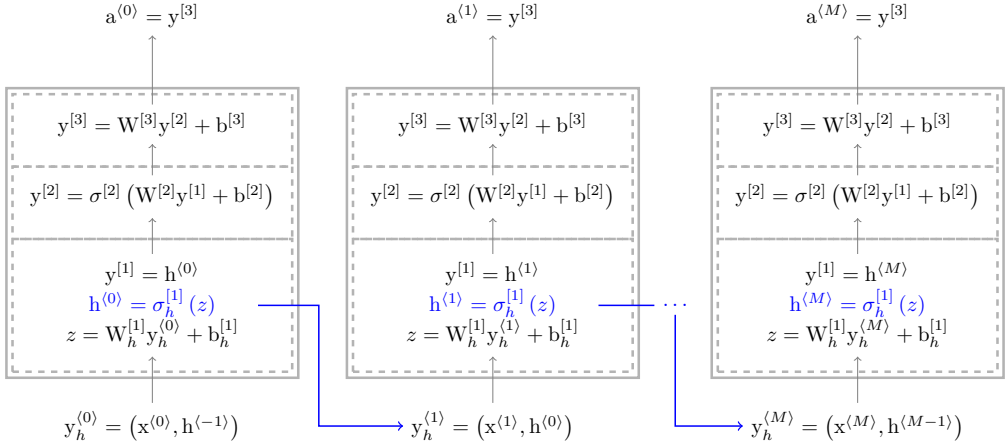
\begin{figure}[!htb]
	\centering
	\scalebox{0.8}{
\begin{tikzpicture}[
roundnode/.style={circle, draw=gray!60, fill=gray!5, very thick, minimum size=6mm},
squarednode/.style={rectangle, draw=gray!60, fill=gray!5, very thick, dashed},
%rectangle/.style={draw=gray!60, fill=gray!5, very thick, dashed},
]
%%%%%%%% t0 %%%%%%%%%%%%
%%%%%%%% t0 %%%%%%%%%%%%
	\node[squarednode] at (1, -1.5) [fill=none,minimum width=46mm,minimum height=22mm,dashed](RNNCellL1) {} ;
	\node[squarednode] at (1, 0.2) [fill=none,minimum width=46mm,minimum height=12mm,dashed](RNNCelL2) {} ;
	\node[squarednode] at (1, 1.4) [fill=none,minimum width=46mm,minimum height=12mm,dashed](RNNCelL3) {} ;	
	\node[squarednode] at (1, -0.3) [fill=none,minimum width=48mm,minimum height=48mm,solid](RNNCelL3) {} ;		
%%%%%%%% t0  %%%%%%%%%%%%	
	\node[squarednode] at (1,-3.5) [draw=none,fill=none](Input0) {$\mathrm{y}_h^{\langle 0 \rangle}=\left( \mathrm{x}^{\langle 0 \rangle}, \mathrm{h}^{\langle -1 \rangle}\right)$} ;

	\node[squarednode] at (1,-1.5) [draw=none,fill=none](OutputL1at0) {$\begin{array}{c}
	\mathrm{y}^{[1]}=\mathrm{h}^{\langle 0 \rangle}
	\\	
	{\color{blue}\mathrm{h}^{\langle 0 \rangle} = \sigma_h^{[1]}\left( z\right)}
	\\
	z = \mathrm{W}_h^{[1]}\mathrm{y}_h^{\langle 0\rangle} + \mathrm{b}_h^{[1]}
	\end{array}$} ;
	\draw[->,draw=gray] (Input0.north).. controls +(up:0mm) and +(down:0mm) .. (OutputL1at0.south);

	\node[squarednode] at (1,0.3) [draw=none,fill=none](OutputL2at0) {$\mathrm{y}^{[2]}=\sigma^{[2]}\left( \mathrm{W}^{[2]} \mathrm{y}^{[1]}+ \mathrm{b}^{[2]}\right)$} ;
	\draw[->,draw=gray] (OutputL1at0.north).. controls +(up:0mm) and +(down:0mm) .. (OutputL2at0.south);

	\node[squarednode] at (1,1.5) [draw=none,fill=none](OutputL3at0) {$\mathrm{y}^{[3]}= \mathrm{W}^{[3]} \mathrm{y}^{[2]}+ \mathrm{b}^{[3]}$} ;
	\draw[->,draw=gray] (OutputL2at0.north).. controls +(up:0mm) and +(down:0mm) .. (OutputL3at0.south);

	\node[squarednode] at (1,3.3) [draw=none,fill=none](OutputVat0) {$\mathrm{a}^{\langle 0\rangle} = \mathrm{y}^{[3]}$} ;
	\draw[->,draw=gray] (OutputL3at0.north).. controls +(up:0mm) and +(down:0mm) .. (OutputVat0.south);		
	
%%%%%%%% t1 %%%%%%%%%%%%
%%%%%%%% t1 %%%%%%%%%%%%
	\node[squarednode] at (6.6, -1.5) [fill=none,minimum width=46mm,minimum height=22mm,dashed](RNNCellL1) {} ;
	\node[squarednode] at (6.6, 0.2) [fill=none,minimum width=46mm,minimum height=12mm,dashed](RNNCelL2) {} ;
	\node[squarednode] at (6.6, 1.4) [fill=none,minimum width=46mm,minimum height=12mm,dashed](RNNCelL3) {} ;	
	\node[squarednode] at (6.6, -0.3) [fill=none,minimum width=48mm,minimum height=48mm,solid](RNNCelL3) {} ;		
%%%%%%%% t1 %%%%%%%%%%%%	
	\node[squarednode] at (6.6,-3.5) [draw=none,fill=none](Input1) {$\mathrm{y}_h^{\langle 1 \rangle}=\left(\mathrm{x}^{\langle 1 \rangle},\mathrm{h}^{\langle 0 \rangle}\right)$} ;
	\draw[->,draw=blue,line width=0.25mm] (OutputL1at0.east) |- ++(1,0) |- (Input1.west);

	\node[squarednode] at (6.6,-1.5) [draw=none,fill=none](OutputL1at1) {$\begin{array}{c}
	\mathrm{y}^{[1]}=\mathrm{h}^{\langle 1 \rangle}
	\\	
	{\color{blue}\mathrm{h}^{\langle 1 \rangle} = \sigma_h^{[1]}\left(z\right)}
	\\
	z= \mathrm{W}_h^{[1]}\mathrm{y}_h^{\langle 1\rangle} + \mathrm{b}_h^{[1]}
	\end{array}$} ;
	\draw[->,draw=gray] (Input1.north).. controls +(up:0mm) and +(down:0mm) .. (OutputL1at1.south);				

	\node[squarednode] at (6.6,0.3) [draw=none,fill=none](OutputL2at1) {$\mathrm{y}^{[2]}=\sigma^{[2]}\left( \mathrm{W}^{[2]} \mathrm{y}^{[1]}+ \mathrm{b}^{[2]}\right)$} ;
	\draw[->,draw=gray] (OutputL1at1.north).. controls +(up:0mm) and +(down:0mm) .. (OutputL2at1.south);

	\node[squarednode] at (6.6,1.5) [draw=none,fill=none](OutputL3at1) {$\mathrm{y}^{[3]}= \mathrm{W}^{[3]} \mathrm{y}^{[2]}+ \mathrm{b}^{[3]}$} ;
	\draw[->,draw=gray] (OutputL2at1.north).. controls +(up:0mm) and +(down:0mm) .. (OutputL3at1.south);

	\node[squarednode] at (6.6,3.3) [draw=none,fill=none](OutputVat1) {$\mathrm{a}^{\langle 1\rangle} = \mathrm{y}^{[3]}$} ;
	\draw[->,draw=gray] (OutputL3at1.north).. controls +(up:0mm) and +(down:0mm) .. (OutputVat1.south);		

%%%%%%%% ... %%%%%%%%%%%%	
%%%%%%%% ... %%%%%%%%%%%%	
\node[squarednode] at (9.6,-1.5) [draw=none,fill=none](Pts) {${\color{blue}\ldots}$} ;
\draw[-,draw=blue,line width=0.25mm] (OutputL1at1.east)  |- ++(0,0) -- (Pts.west);	

%%%%%%%% tM %%%%%%%%%%%%
%%%%%%%% tM %%%%%%%%%%%%
	\node[squarednode] at (12.6, -1.5) [fill=none,minimum width=46mm,minimum height=22mm,dashed](RNNCellL1) {} ;
	\node[squarednode] at (12.6, 0.2) [fill=none,minimum width=46mm,minimum height=12mm,dashed](RNNCelL2) {} ;
	\node[squarednode] at (12.6, 1.4) [fill=none,minimum width=46mm,minimum height=12mm,dashed](RNNCelL3) {} ;	
	\node[squarednode] at (12.6, -0.3) [fill=none,minimum width=48mm,minimum height=48mm,solid](RNNCelL3) {} ;		
%%%%%%%% tM %%%%%%%%%%%%	
	\node[squarednode] at (12.6,-3.5) [draw=none,fill=none](InputM) {$\mathrm{y}_h^{\langle M\rangle}=\left(\mathrm{x}^{\langle M \rangle}, \mathrm{h}^{\langle M-1 \rangle}\right)$} ;
\draw[->,draw=blue,line width=0.25mm] (Pts.south)  ++(0,0) |- (InputM.west);

	\node[squarednode] at (12.6,-1.5) [draw=none,fill=none](OutputL1atM) {$\begin{array}{c}
	\mathrm{y}^{[1]}=\mathrm{h}^{\langle M \rangle} 
	\\	
	{\color{blue}\mathrm{h}^{\langle M \rangle} = \sigma_h^{[1]}\left(z\right)}
	\\
	z= \mathrm{W}_h^{[1]}\mathrm{y}_h^{\langle M\rangle} + \mathrm{b}_h^{[1]}
	\end{array}$} ;
	\draw[->,draw=gray] (InputM.north).. controls +(up:0mm) and +(down:0mm) .. (OutputL1atM.south);				

	\node[squarednode] at (12.6,0.3) [draw=none,fill=none](OutputL2atM) {$\mathrm{y}^{[2]}=\sigma^{[2]}\left( \mathrm{W}^{[2]} \mathrm{y}^{[1]}+ \mathrm{b}^{[2]}\right)$} ;
	\draw[->,draw=gray] (OutputL1atM.north).. controls +(up:0mm) and +(down:0mm) .. (OutputL2atM.south);

	\node[squarednode] at (12.6,1.5) [draw=none,fill=none](OutputL3atM) {$\mathrm{y}^{[3]}= \mathrm{W}^{[3]} \mathrm{y}^{[2]}+ \mathrm{b}^{[3]}$} ;
	\draw[->,draw=gray] (OutputL2atM.north).. controls +(up:0mm) and +(down:0mm) .. (OutputL3atM.south);

	\node[squarednode] at (12.6,3.3) [draw=none,fill=none](OutputVatM) {$\mathrm{a}^{\langle M\rangle} = \mathrm{y}^{[3]}$} ;
	\draw[->,draw=gray] (OutputL3atM.north).. controls +(up:0mm) and +(down:0mm) .. (OutputVatM.south);		
\end{tikzpicture}
}
\caption{Iteration of a standard RNN.}
\label{fig:Standard RNN iteration}
\end{figure}
%We use the cell in the RNNs to approximate the optimal control $v^*$ in \eqref{eq:optimal feedback} and the price $\varpi$ in the solution of Problem \ref{problem:MFGs price problem GS}.
When necessary, we emphasize the dependence of the approximated values on the parameter $\Theta$. For instance, if we approximate $v^*$ at time $t_k$ through $\mathrm{v}^{\langle k \rangle}$, which is computed by a NN with a parameter $\Theta$, we write
\[
	\mathrm{v}^{\langle k\rangle }(\Theta).
\]
%This notation is consistent with the one introduced for layers of MLP. For instance, it is possible to consider an architecture where the output of layer $L$ is $\mathrm{v}^{\langle k\rangle}$; that is
%\[
%	\mathrm{v}^{\langle k \rangle }(\Theta)=\mathrm{a}^{[L]}.
%\]
When it is clear that we are considering a specific neural network architecture, we omit the dependence on $\Theta$.

For both architectures, we use a MLP with the following hyper-parameters: the number of layers is $L_\varpi=L_v=3$, $\sigma^{[1]}$, and $\sigma^{[2]}$ are the sigmoid activation functions, and $\sigma^{[3]}$ is equal to the identity function. For the number of neurons, we select $n^{[1]}=n^{[2]}=64$, $n^{[3]}=1$ for the MLP approximating $v$, and $n^{[1]}=n^{[2]}=32$, $n^{[3]}=1$ for the MLP approximating $\varpi$. Thus 
\begin{align*}
\Theta_v=\left(W^{[1]}_{64\times 3},b^{[1]}_{64\times 1},W^{[2]}_{64\times 64},b^{[2]}_{64\times 1},W^{[3]}_{1\times 64},b^{[3]}_{1\times 1}\right), 
\\
\Theta_\varpi=\left(W^{[1]}_{32\times 2},b^{[1]}_{32\times 1},W^{[2]}_{32\times 32},b^{[2]}_{32\times 1},W^{[3]}_{1\times 32},b^{[3]}_{1\times 1}\right).
\end{align*}
When using the RNN, we add to the previous a hidden state of dimension $32$. Thus
\begin{align*}
\Theta_v=\left(W^{[1]}_{64\times 4},b^{[1]}_{64\times 1},W^{[2]}_{64\times 64},b^{[2]}_{64\times 1},W^{[3]}_{1\times 64},b^{[3]}_{1\times 1},{W_h^{[1]}}_{32\times 2},{b_h^{[1]}}_{32\times 1},{W_h^{[2]}}_{32\times 1},{b_h^{[2]}}_{1\times 1}\right), 
\\
\Theta_\varpi=\left(W^{[1]}_{32\times 2},b^{[1]}_{32\times 1},W^{[2]}_{32\times 32},b^{[2]}_{32\times 1},W^{[3]}_{1\times 32},b^{[3]}_{1\times 1},{W_h^{[1]}}_{32\times 2},{b_h^{[1]}}_{32\times 1},{W_h^{[2]}}_{32\times 1},{b_h^{[2]}}_{1\times 1}\right).
\end{align*}

We introduce the time discretization because we iterate the NN across the temporal direction. Let $\Delta_t =T/M$ be the time-step size, and $t_k = k \Delta_t$, $k=0,\ldots,M$. We use a forward-Euler discretization of \eqref{eq:Agent dynamics}
%\begin{gather}\label{eq:Nagent minmax problem discrete}
%\sup_{\tilde{\varpi}}\inf_{\bv} \frac{1}{N} \sum_{i=1}^N \sum_{k=0}^{M-1} h\left(L(X_{k}^i,\mathrm{v}^i_{k})+ \tilde{\varpi}_k\left(\mathrm{v}^i_{k}-Q_k\right) \right)+ u_T\left(X^i_{M}\right).
%\end{gather}
%\begin{gather}\label{eq:Nagent minmax problem 0}
%(\bv,\varpi)\mapsto \frac{1}{N} \sum_{i=1}^N \int_0^T L(X^i(t),v^i(t)) + \varpi(t)\left( v^i(t) - Q(t) \right) ~ dt + u_T(X^i(T)),
%\end{gather}
%where $\bX=\left(X^i,\ldots,X^N\right)$ and $X^i$ solves \eqref{eq:Agent dynamics} for $v^i$ and initial condition $x_0^i$.
to update the state variable; that is,
\begin{equation}\label{eq:NN Forward dynamics}
	X^{\langle k+1 \rangle}=X^{\langle k \rangle}+  \Delta_t \mathrm{v}^{\langle k \rangle}(\Theta_v), \quad X^{\langle 0 \rangle}=x_0,
\end{equation}
where $\mathrm{v}^{\langle k \rangle}(\Theta_v)$ for $k=0,\ldots,M-1$ is computed by a NN, $\mathrm{NN}_v$, whose cell is specified by the parameter $\Theta_v$. Another NN with parameter $\Theta_\varpi$ computes $\varpi^{\langle k \rangle}$, for $k=0,\ldots,M$. 

\begin{remark}
In this work, we have fixed the hyper-parameters (number of layers, number of neurons, and activation functions) in order to simplify our presentation. Moreover, the choice of the hyper-parameter values was driven by exhaustive numerical experiments. However, different values are viable, and the exposition with arbitrary hyper-parameters follows the same structure we present here.
\end{remark}

\begin{remark}
The NN approximations of $v^*$ in \eqref{eq:optimal feedback} provided by the two architectures we study are in feedback form; that is, $\mathrm{v}^{\langle k \rangle}(\Theta_v)$ in \eqref{eq:NN Forward dynamics} depends on the current position $X^{\langle k \rangle}$ and price $\varpi^{\langle k \rangle}$. More precisely, 
\begin{equation}\label{eq:Neural ODE discrete}
	X^{\langle k+1 \rangle}=X^{\langle k \rangle}+  \Delta_t \mathrm{v}^{\langle k \rangle}(\Theta_v)(X^{\langle k \rangle},\varpi^{\langle k \rangle}), \quad X^{\langle 0 \rangle}=x_0,
\end{equation}
for $k=0,\ldots,M-1$. We rely on the previous discrete dynamics to obtain our discrete approximation of $v^*$ and $\varpi$. However, taking the limit of \eqref{eq:Neural ODE discrete} as $\Delta_t \to 0$ formally, we can consider a  NN approximation of $v^*$ of the form
\begin{equation*}
%\label{eq:Neural ODE}
	\dot{X}(t) = \mathrm{v}(\Theta_v)(X(t),\varpi(t)), \quad X(0)=x_0,
\end{equation*}
obtaining a so-called Neural Ordinary Differential Equation, as proposed in \cite{NEURIPS2018_69386f6b}. Neural ODEs are formally derived as the limit of a Residual NN (see \cite{7780459}) when the number of layers in the skip connections goes to infinity. Because of the continuous structure, Neural ODEs are trained using an adjoint state equation. Therefore, an alternative continuous approach to approximate $v^*$ and $\varpi$ is to use Neural ODEs, which goes beyond the objective of the current study.
\end{remark}

\subsection{First architecture: MLP with instantaneous feedback.}

For this architecture, we consider the dependence of the optimal control according to \eqref{eq:optimal feedback}; that is, dependence at time $t$ is through the time, state, and price variables at time $t$. For the price, we consider its dependence on time and supply at time $t$. Let $\mathrm{MLP}_v$ and $\mathrm{MLP}_\varpi$ denote a MLPs approximating $v^*$ and $\varpi$, with $L_v$ and $L_\varpi$ layers, respectively. At time step $t_k$, the input for $\mathrm{MLP}_v$ is $\left(t_{k},X^{(i)\langle k \rangle}, \varpi^{\langle k \rangle} \right)$, and that for $\mathrm{MLP}_\varpi$ is $\left(t_{k}, Q^{\langle k \rangle} \right)$. The outputs are $\mathrm{v}^{(i)\langle k \rangle}(\Theta_v)$ and $\varpi^{\langle k \rangle}(\Theta_\varpi)$, respectively. The iteration of the architecture $\mathrm{MLP}_v$ is depicted in Figure \ref{fig:MLPv instantaneous}, where the velocity introduces a connection along the temporal direction due to the Forward-Euler discretization of the sate. Figure \ref{fig:MLPw instantaneous} illustrates the iteration of the architecture $\mathrm{MLP}_\varpi$, for which no temporal connection is used. 

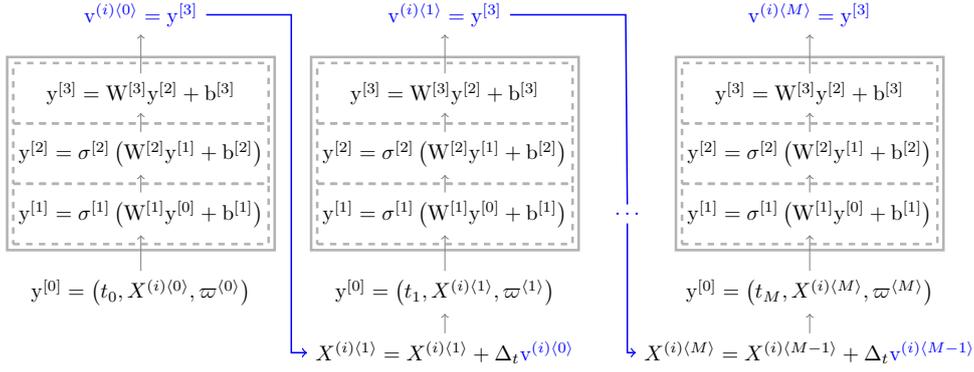
\begin{figure}[!htb]
	\centering
	\scalebox{0.8}{
 \begin{tikzpicture}[
roundnode/.style={circle, draw=gray!60, fill=gray!5, very thick, minimum size=6mm},
squarednode/.style={rectangle, draw=gray!60, fill=gray!5, very thick, dashed},
%rectangle/.style={draw=gray!60, fill=gray!5, very thick, dashed},
]
%%%%%%%% t0 %%%%%%%%%%%%
%%%%%%%% t0 %%%%%%%%%%%%
	\node[squarednode] at (1, -1.5) [fill=none,minimum width=42mm,minimum height=10mm,dashed](RNNCellL1) {} ;
	\node[squarednode] at (1, -0.5) [fill=none,minimum width=42mm,minimum height=10mm,dashed](RNNCelL2) {} ;
	\node[squarednode] at (1, 0.5) [fill=none,minimum width=42mm,minimum height=10mm,dashed](RNNCelL3) {} ;	
	\node[squarednode] at (1, -0.5) [fill=none,minimum width=44mm,minimum height=32mm,solid](RNNCelL3) {} ;		
%%%%%%%% t0  %%%%%%%%%%%%	
	\node[squarednode] at (1,-2.8) [draw=none,fill=none](Input0) {$\mathrm{y}^{[0]}=\left( t_0, X^{(i)\langle 0 \rangle},\varpi^{\langle 0 \rangle}\right)$} ;
	\node[squarednode] at (1,-1.5) [draw=none,fill=none](OutputL1at0) {$\mathrm{y}^{[1]}=\sigma^{[1]}\left( \mathrm{W}^{[1]}\mathrm{y}^{[0]} + \mathrm{b}^{[1]}\right)$} ;
	\draw[->,draw=gray] (Input0.north).. controls +(up:0mm) and +(down:0mm) .. (OutputL1at0.south);

	\node[squarednode] at (1,-0.5) [draw=none,fill=none](OutputL2at0) {$\mathrm{y}^{[2]}=\sigma^{[2]}\left( \mathrm{W}^{[2]} \mathrm{y}^{[1]}+ \mathrm{b}^{[2]}\right)$} ;
	\draw[->,draw=gray] (OutputL1at0.north).. controls +(up:0mm) and +(down:0mm) .. (OutputL2at0.south);

	\node[squarednode] at (1,0.5) [draw=none,fill=none](OutputL3at0) {$\mathrm{y}^{[3]}= \mathrm{W}^{[3]} \mathrm{y}^{[2]}+ \mathrm{b}^{[3]}$} ;
	\draw[->,draw=gray] (OutputL2at0.north).. controls +(up:0mm) and +(down:0mm) .. (OutputL3at0.south);

	\node[squarednode] at (1,1.8) [draw=none,fill=none](OutputVat0) {${\color{blue}\mathrm{v}^{(i)\langle 0\rangle} = \mathrm{y}^{[3]}}$} ;
	\draw[->,draw=gray] (OutputL3at0.north).. controls +(up:0mm) and +(down:0mm) .. (OutputVat0.south);

%%%%%%%% t1 %%%%%%%%%%%%	
%%%%%%%% t1 %%%%%%%%%%%%	
	\node[squarednode] at (6, -1.5) [fill=none,minimum width=42mm,minimum height=10mm,dashed](RNNCellL1) {} ;
	\node[squarednode] at (6, -0.5) [fill=none,minimum width=42mm,minimum height=10mm,dashed](RNNCelL2) {} ;
	\node[squarednode] at (6, 0.5) [fill=none,minimum width=42mm,minimum height=10mm,dashed](RNNCelL3) {} ;	
	\node[squarednode] at (6, -0.5) [fill=none,minimum width=44mm,minimum height=32mm,solid](RNNCelL3) {} ;		
%%%%%%%% t1 %%%%%%%%%%%%	
	\node[squarednode] at (6,-3.8) [draw=none,fill=none](X1) {$X^{(i)\langle 1 \rangle} = X^{(i)\langle 1 \rangle} +  \Delta_t {\color{blue} \mathrm{v}^{(i)\langle 0 \rangle}}$} ;
%	\draw[->,draw=black,line width=0.25mm] (OutputVat0.east).. controls ++(3,0cm) and ++ (-1.5,0cm) .. (X1.west);		
\draw[->,draw=blue,line width=0.25mm] (OutputVat0.east) |- ++(1.4,0) |- (X1.west);	
	
	\node[squarednode] at (6,-2.8) [draw=none,fill=none](Input1) {$\mathrm{y}^{[0]}=\left( t_1, X^{(i)\langle 1 \rangle}, \varpi^{\langle 1 \rangle}\right)$} ;
	\draw[->,draw=gray] (X1.north).. controls +(up:0mm) and +(down:0mm) .. (Input1.south);

	\node[squarednode] at (6,-1.5) [draw=none,fill=none](OutputL1at1) {$\mathrm{y}^{[1]}=\sigma^{[1]}\left( \mathrm{W}^{[1]}\mathrm{y}^{[0]} + \mathrm{b}^{[1]}\right)$} ;
	\draw[->,draw=gray] (Input1.north).. controls +(up:0mm) and +(down:0mm) .. (OutputL1at1.south);

	\node[squarednode] at (6,-0.5) [draw=none,fill=none](OutputL2at1) {$\mathrm{y}^{[2]}=\sigma^{[2]}\left( \mathrm{W}^{[2]} \mathrm{y}^{[1]}+ \mathrm{b}^{[2]}\right)$} ;
	\draw[->,draw=gray] (OutputL1at1.north).. controls +(up:0mm) and +(down:0mm) .. (OutputL2at1.south);

	\node[squarednode] at (6,0.5) [draw=none,fill=none](OutputL3at1) {$\mathrm{y}^{[3]}= \mathrm{W}^{[3]} \mathrm{y}^{[2]}+ \mathrm{b}^{[3]}$} ;
	\draw[->,draw=gray] (OutputL2at1.north).. controls +(up:0mm) and +(down:0mm) .. (OutputL3at1.south);

	\node[squarednode] at (6,1.8) [draw=none,fill=none](OutputVat1) {${\color{blue}\mathrm{v}^{(i)\langle 1\rangle} = \mathrm{y}^{[3]}}$} ;
	\draw[->,draw=gray] (OutputL3at1.north).. controls +(up:0mm) and +(down:0mm) .. (OutputVat1.south);		

%%%%%%%% ... %%%%%%%%%%%%	
%%%%%%%% ... %%%%%%%%%%%%	
\node[squarednode] at (9,-1.5) [draw=none,fill=none](Pts) {${\color{blue}\ldots}$} ;
\draw[-,draw=blue,line width=0.25mm] (OutputVat1.east)  |- ++(1.9,0) -- (Pts.north);	

%%%%%%%% tM %%%%%%%%%%%%	
%%%%%%%% tM %%%%%%%%%%%%	
	\node[squarednode] at (12, -1.5) [fill=none,minimum width=42mm,minimum height=10mm,dashed](RNNCellL1) {} ;
	\node[squarednode] at (12, -0.5) [fill=none,minimum width=42mm,minimum height=10mm,dashed](RNNCelL2) {} ;
	\node[squarednode] at (12, 0.5) [fill=none,minimum width=42mm,minimum height=10mm,dashed](RNNCelL3) {} ;	
	\node[squarednode] at (12, -0.5) [fill=none,minimum width=44mm,minimum height=32mm,solid](RNNCelL3) {} ;		
%%%%%%%% tM %%%%%%%%%%%%	
	\node[squarednode] at (12,-3.8) [draw=none,fill=none](XM) {$X^{(i)\langle M \rangle} = X^{(i)\langle M-1 \rangle} +  \Delta_t {\color{blue} \mathrm{v}^{(i)\langle M-1 \rangle}}$} ;
\draw[->,draw=blue,line width=0.25mm] (Pts.south)  ++(0,0) |- (XM.west);	
	
	\node[squarednode] at (12,-2.8) [draw=none,fill=none](InputM) {$\mathrm{y}^{[0]}=\left( t_M, X^{(i)\langle M \rangle},\varpi^{\langle M \rangle}\right)$} ;
	\draw[->,draw=gray] (XM.north).. controls +(up:0mm) and +(down:0mm) .. (InputM.south);

	\node[squarednode] at (12,-1.5) [draw=none,fill=none](OutputL1atM) {$\mathrm{y}^{[1]}=\sigma^{[1]}\left( \mathrm{W}^{[1]}\mathrm{y}^{[0]} + \mathrm{b}^{[1]}\right)$} ;
	\draw[->,draw=gray] (InputM.north).. controls +(up:0mm) and +(down:0mm) .. (OutputL1atM.south);

	\node[squarednode] at (12,-0.5) [draw=none,fill=none](OutputL2atM) {$\mathrm{y}^{[2]}=\sigma^{[2]}\left( \mathrm{W}^{[2]} \mathrm{y}^{[1]}+ \mathrm{b}^{[2]}\right)$} ;
	\draw[->,draw=gray] (OutputL1atM.north).. controls +(up:0mm) and +(down:0mm) .. (OutputL2atM.south);

	\node[squarednode] at (12,0.5) [draw=none,fill=none](OutputL3atM) {$\mathrm{y}^{[3]}= \mathrm{W}^{[3]} \mathrm{y}^{[2]}+ \mathrm{b}^{[3]}$} ;
	\draw[->,draw=gray] (OutputL2atM.north).. controls +(up:0mm) and +(down:0mm) .. (OutputL3atM.south);

	\node[squarednode] at (12,1.8) [draw=none,fill=none](OutputVatM) {${\color{blue}\mathrm{v}^{(i)\langle M\rangle} = \mathrm{y}^{[3]}}$} ;
	\draw[->,draw=gray] (OutputL3atM.north).. controls +(up:0mm) and +(down:0mm) .. (OutputVatM.south);		
(Nplus1.west);
%	\node[text width=6cm] at (2.5,0) {\color{gray} RNN};
\end{tikzpicture}
}
\caption{Iteration of the MLP for $v^*$, $\mathrm{MLP}_v$, with instantaneous feedback.}
\label{fig:MLPv instantaneous}
\end{figure}

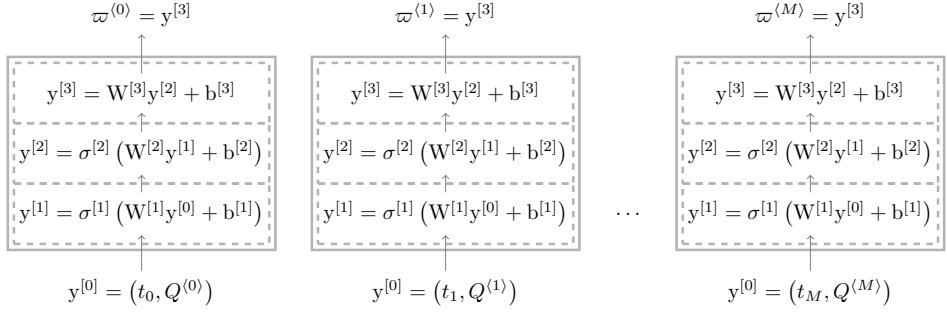
\begin{figure}[!htb]
	\centering
	\scalebox{0.8}{
 \begin{tikzpicture}[
roundnode/.style={circle, draw=gray!60, fill=gray!5, very thick, minimum size=6mm},
squarednode/.style={rectangle, draw=gray!60, fill=gray!5, very thick, dashed},
%rectangle/.style={draw=gray!60, fill=gray!5, very thick, dashed},
]
%%%%%%%% t0 %%%%%%%%%%%%
%%%%%%%% t0 %%%%%%%%%%%%
	\node[squarednode] at (1, -1.5) [fill=none,minimum width=42mm,minimum height=10mm,dashed](RNNCellL1) {} ;
	\node[squarednode] at (1, -0.5) [fill=none,minimum width=42mm,minimum height=10mm,dashed](RNNCelL2) {} ;
	\node[squarednode] at (1, 0.5) [fill=none,minimum width=42mm,minimum height=10mm,dashed](RNNCelL3) {} ;	
	\node[squarednode] at (1, -0.5) [fill=none,minimum width=44mm,minimum height=32mm,solid](RNNCelL3) {} ;		
%%%%%%%% t0  %%%%%%%%%%%%	
	\node[squarednode] at (1,-2.8) [draw=none,fill=none](Input0) {$\mathrm{y}^{[0]}=\left( t_0, Q^{\langle 0 \rangle}\right)$} ;
	\node[squarednode] at (1,-1.5) [draw=none,fill=none](OutputL1at0) {$\mathrm{y}^{[1]}=\sigma^{[1]}\left( \mathrm{W}^{[1]}\mathrm{y}^{[0]} + \mathrm{b}^{[1]}\right)$} ;
	\draw[->,draw=gray] (Input0.north).. controls +(up:0mm) and +(down:0mm) .. (OutputL1at0.south);

	\node[squarednode] at (1,-0.5) [draw=none,fill=none](OutputL2at0) {$\mathrm{y}^{[2]}=\sigma^{[2]}\left( \mathrm{W}^{[2]} \mathrm{y}^{[1]}+ \mathrm{b}^{[2]}\right)$} ;
	\draw[->,draw=gray] (OutputL1at0.north).. controls +(up:0mm) and +(down:0mm) .. (OutputL2at0.south);

	\node[squarednode] at (1,0.5) [draw=none,fill=none](OutputL3at0) {$\mathrm{y}^{[3]}= \mathrm{W}^{[3]} \mathrm{y}^{[2]}+ \mathrm{b}^{[3]}$} ;
	\draw[->,draw=gray] (OutputL2at0.north).. controls +(up:0mm) and +(down:0mm) .. (OutputL3at0.south);

	\node[squarednode] at (1,1.8) [draw=none,fill=none](OutputVat0) {$\varpi^{\langle 0\rangle} = \mathrm{y}^{[3]}$} ;
	\draw[->,draw=gray] (OutputL3at0.north).. controls +(up:0mm) and +(down:0mm) .. (OutputVat0.south);

%%%%%%%% t1 %%%%%%%%%%%%	
%%%%%%%% t1 %%%%%%%%%%%%	
	\node[squarednode] at (6, -1.5) [fill=none,minimum width=42mm,minimum height=10mm,dashed](RNNCellL1) {} ;
	\node[squarednode] at (6, -0.5) [fill=none,minimum width=42mm,minimum height=10mm,dashed](RNNCelL2) {} ;
	\node[squarednode] at (6, 0.5) [fill=none,minimum width=42mm,minimum height=10mm,dashed](RNNCelL3) {} ;	
	\node[squarednode] at (6, -0.5) [fill=none,minimum width=44mm,minimum height=32mm,solid](RNNCelL3) {} ;		
%%%%%%%% t1 %%%%%%%%%%%%	
	
	\node[squarednode] at (6,-2.8) [draw=none,fill=none](Input1) {$\mathrm{y}^{[0]}=\left( t_1, Q^{\langle 1 \rangle}\right)$} ;
	
	\node[squarednode] at (6,-1.5) [draw=none,fill=none](OutputL1at1) {$\mathrm{y}^{[1]}=\sigma^{[1]}\left( \mathrm{W}^{[1]}\mathrm{y}^{[0]} + \mathrm{b}^{[1]}\right)$} ;
	\draw[->,draw=gray] (Input1.north).. controls +(up:0mm) and +(down:0mm) .. (OutputL1at1.south);

	\node[squarednode] at (6,-0.5) [draw=none,fill=none](OutputL2at1) {$\mathrm{y}^{[2]}=\sigma^{[2]}\left( \mathrm{W}^{[2]} \mathrm{y}^{[1]}+ \mathrm{b}^{[2]}\right)$} ;
	\draw[->,draw=gray] (OutputL1at1.north).. controls +(up:0mm) and +(down:0mm) .. (OutputL2at1.south);

	\node[squarednode] at (6,0.5) [draw=none,fill=none](OutputL3at1) {$\mathrm{y}^{[3]}= \mathrm{W}^{[3]} \mathrm{y}^{[2]}+ \mathrm{b}^{[3]}$} ;
	\draw[->,draw=gray] (OutputL2at1.north).. controls +(up:0mm) and +(down:0mm) .. (OutputL3at1.south);

	\node[squarednode] at (6,1.8) [draw=none,fill=none](OutputVat1) {$\varpi^{\langle 1\rangle} = \mathrm{y}^{[3]}$} ;
	\draw[->,draw=gray] (OutputL3at1.north).. controls +(up:0mm) and +(down:0mm) .. (OutputVat1.south);		

%%%%%%%% ... %%%%%%%%%%%%	
%%%%%%%% ... %%%%%%%%%%%%	
\node[squarednode] at (9,-1.5) [draw=none,fill=none](Pts) {$\ldots$} ;

%%%%%%%% tM %%%%%%%%%%%%	
%%%%%%%% tM %%%%%%%%%%%%	
	\node[squarednode] at (12, -1.5) [fill=none,minimum width=42mm,minimum height=10mm,dashed](RNNCellL1) {} ;
	\node[squarednode] at (12, -0.5) [fill=none,minimum width=42mm,minimum height=10mm,dashed](RNNCelL2) {} ;
	\node[squarednode] at (12, 0.5) [fill=none,minimum width=42mm,minimum height=10mm,dashed](RNNCelL3) {} ;	
	\node[squarednode] at (12, -0.5) [fill=none,minimum width=44mm,minimum height=32mm,solid](RNNCelL3) {} ;		
%%%%%%%% tM %%%%%%%%%%%%	
	
	\node[squarednode] at (12,-2.8) [draw=none,fill=none](InputM) {$\mathrm{y}^{[0]}=\left( t_M, Q^{\langle M \rangle}\right)$} ;
	
	\node[squarednode] at (12,-1.5) [draw=none,fill=none](OutputL1atM) {$\mathrm{y}^{[1]}=\sigma^{[1]}\left( \mathrm{W}^{[1]}\mathrm{y}^{[0]} + \mathrm{b}^{[1]}\right)$} ;
	\draw[->,draw=gray] (InputM.north).. controls +(up:0mm) and +(down:0mm) .. (OutputL1atM.south);

	\node[squarednode] at (12,-0.5) [draw=none,fill=none](OutputL2atM) {$\mathrm{y}^{[2]}=\sigma^{[2]}\left( \mathrm{W}^{[2]} \mathrm{y}^{[1]}+ \mathrm{b}^{[2]}\right)$} ;
	\draw[->,draw=gray] (OutputL1atM.north).. controls +(up:0mm) and +(down:0mm) .. (OutputL2atM.south);

	\node[squarednode] at (12,0.5) [draw=none,fill=none](OutputL3atM) {$\mathrm{y}^{[3]}= \mathrm{W}^{[3]} \mathrm{y}^{[2]}+ \mathrm{b}^{[3]}$} ;
	\draw[->,draw=gray] (OutputL2atM.north).. controls +(up:0mm) and +(down:0mm) .. (OutputL3atM.south);

	\node[squarednode] at (12,1.8) [draw=none,fill=none](OutputVatM) {$\varpi^{\langle M\rangle} = \mathrm{y}^{[3]}$} ;
	\draw[->,draw=gray] (OutputL3atM.north).. controls +(up:0mm) and +(down:0mm) .. (OutputVatM.south);		
(Nplus1.west);
%	\node[text width=6cm] at (2.5,0) {\color{gray} RNN};
\end{tikzpicture}
}
\caption{Iteration of the MLP for $\varpi$, $\mathrm{MLP}_\varpi$, with instantaneous feedback.}
\label{fig:MLPw instantaneous}
\end{figure}

\subsection{Second architecture: RNN with supply history}

For this architecture, we consider the dependence of the distribution of players according to \eqref{eq:Xbar dynamics}; that is, the distribution of players at time $t$ depends on the supply history up to time $t$. This dependence is incorporated for approximating both $v^*$ and $\varpi$ by using two recurrent structures. Let $\mathrm{RNN}_v$ and $\mathrm{RNN}_\varpi$ denote RNNs approximating $v^*$ and $\varpi$, respectively. We feed the sequence $(Q^{\langle 0\rangle},\ldots,Q^{\langle M\rangle})$ to both $\mathrm{RNN}_v$ and $\mathrm{RNN}_\varpi$ and obtain the output sequences 
\[
	(\mathrm{a}^{\langle 0\rangle}(\Theta_v),\ldots,\mathrm{a}^{\langle M\rangle}(\Theta_v)) \quad \mbox{and} \quad (\mathrm{a}^{\langle 0\rangle}(\Theta_\varpi),\ldots,\mathrm{a}^{\langle M\rangle}(\Theta_\varpi)),
\]
respectively. At time step $t_k$, the input for $\mathrm{RNN}_v$ is $\left(t_{k}, X^{(i)\langle k \rangle}, \varpi^{\langle k \rangle},\mathrm{a}^{\langle k \rangle}(\Theta_v)\right)$, and the input for $\mathrm{RNN}_\varpi$ is $\left(t_{k},\mathrm{a}^{\langle k \rangle}(\Theta_\varpi)\right)$. The outputs are $\varpi^{\langle k \rangle}(\Theta_\varpi)$ and $\mathrm{v}^{(i)\langle k \rangle}(\Theta_v)$, respectively. Figure \ref{fig:RNN history Q v} illustrates the iteration of $\mathrm{RNN}_v$. The blue arrows show that the velocity and a hidden state carry information in the temporal direction. Notice that the hidden state $\mathrm{h}$ depends only on the supply history. Figure \ref{fig:RNN history Q w} depicts the iteration of $\mathrm{RNN}_\varpi$, in which the hidden state $\mathrm{h}$ is the variable carrying the supply history.

\begin{figure}[!htb]
	\centering
	\scalebox{0.8}{
	\begin{tikzpicture}[
roundnode/.style={circle, draw=gray!60, fill=gray!5, very thick, minimum size=6mm},
squarednode/.style={rectangle, draw=gray!60, fill=gray!5, very thick, dashed},
%rectangle/.style={draw=gray!60, fill=gray!5, very thick, dashed},
]
%%%%%%%% t0 %%%%%%%%%%%%
%%%%%%%% t0 %%%%%%%%%%%%
	\node[squarednode] at (1, -1.5) [fill=none,minimum width=46mm,minimum height=22mm,dashed](RNNCellL1) {} ;
	\node[squarednode] at (1, 0.2) [fill=none,minimum width=46mm,minimum height=12mm,dashed](RNNCelL2) {} ;
	\node[squarednode] at (1, 1.4) [fill=none,minimum width=46mm,minimum height=12mm,dashed](RNNCelL3) {} ;	
	\node[squarednode] at (1, -0.3) [fill=none,minimum width=48mm,minimum height=48mm,solid](RNNCelL3) {} ;		
%%%%%%%% t0  %%%%%%%%%%%%	
	
	\node[squarednode] at (1,-4) [draw=none,fill=none](Input0) {${\renewcommand{\arraystretch}{1.5} \begin{array}{c} \mathrm{y}_h^{\langle 0 \rangle}=\left( Q^{\langle 0 \rangle}, {\color{blue} \mathrm{h}^{\langle -1 \rangle}}\right) \\ \left(t_0,X^{(i)\langle 0 \rangle},\varpi^{\langle 0 \rangle}\right) \end{array}}$} ;

	\node[squarednode] at (1,-1.5) [draw=none,fill=none](OutputL1at0) {$\begin{array}{c}
	\mathrm{y}^{[1]}=\sigma^{[1]}\left( \mathrm{W}^{[1]}\mathrm{y}^{[0]} + \mathrm{b}^{[1]}\right)
	\\
	\mathrm{y}^{[0]}=\left( t_0, X^{(i)\langle 0 \rangle}, \varpi^{\langle 0 \rangle},\mathrm{a}^{\langle 0 \rangle}\right) 
	\\	
	\mathrm{a}^{\langle 0 \rangle}=\sigma^{[1]}\left(\mathrm{W}_h^{[2]}\mathrm{h}^{\langle 0 \rangle}+\mathrm{b}_h^{[2]}\right)
	\\
	{\color{blue}\mathrm{h}^{\langle 0 \rangle} = \sigma_h^{[1]}\left( \mathrm{W}_h^{[1]}\mathrm{y}_h^{\langle 0\rangle} + \mathrm{b}_h^{[1]}\right)}
	\end{array}$} ;
	\draw[->,draw=gray] (Input0.north).. controls +(up:0mm) and +(down:0mm) .. (OutputL1at0.south);

	\node[squarednode] at (1,0.3) [draw=none,fill=none](OutputL2at0) {$\mathrm{y}^{[2]}=\sigma^{[2]}\left( \mathrm{W}^{[2]} \mathrm{y}^{[1]}+ \mathrm{b}^{[2]}\right)$} ;
	\draw[->,draw=gray] (OutputL1at0.north).. controls +(up:0mm) and +(down:0mm) .. (OutputL2at0.south);

	\node[squarednode] at (1,1.5) [draw=none,fill=none](OutputL3at0) {$\mathrm{y}^{[3]}= \mathrm{W}^{[3]} \mathrm{y}^{[2]}+ \mathrm{b}^{[3]}$} ;
	\draw[->,draw=gray] (OutputL2at0.north).. controls +(up:0mm) and +(down:0mm) .. (OutputL3at0.south);

	\node[squarednode] at (1,3.3) [draw=none,fill=none](OutputVat0) {${\color{blue}\mathrm{v}^{(i)\langle 0\rangle} = \mathrm{y}^{[3]}}$} ;
	\draw[->,draw=gray] (OutputL3at0.north).. controls +(up:0mm) and +(down:0mm) .. (OutputVat0.south);		
	
%%%%%%%% t1 %%%%%%%%%%%%
%%%%%%%% t1 %%%%%%%%%%%%
	\node[squarednode] at (6.6, -1.5) [fill=none,minimum width=46mm,minimum height=22mm,dashed](RNNCellL1) {} ;
	\node[squarednode] at (6.6, 0.2) [fill=none,minimum width=46mm,minimum height=12mm,dashed](RNNCelL2) {} ;
	\node[squarednode] at (6.6, 1.4) [fill=none,minimum width=46mm,minimum height=12mm,dashed](RNNCelL3) {} ;	
	\node[squarednode] at (6.6, -0.3) [fill=none,minimum width=48mm,minimum height=48mm,solid](RNNCelL3) {} ;		
%%%%%%%% t1 %%%%%%%%%%%%	
	\node[squarednode] at (6.6,-5.5) [draw=none,fill=none](X1FE) {$X^{(i)\langle 1 \rangle} = X^{(i)\langle 0 \rangle} + \Delta_t {\color{blue} \mathrm{v}^{(i)\langle 0 \rangle}}$} ;
%	\draw[->,draw=black,line width=0.25mm] (OutputVat0.east).. controls ++(3,0cm) and ++ (-1.5,0cm) .. (X1.west);		

\draw[->,draw=blue,line width=0.25mm] (OutputVat0.east) |- ++(1.7,0) |- (X1FE.west);	

	\node[squarednode] at (6.6,-4) [draw=none,fill=none](Input1) {${\renewcommand{\arraystretch}{1.5} \begin{array}{c} \mathrm{y}_h^{\langle 1 \rangle}=\left( Q^{\langle 1 \rangle}, {\color{blue}\mathrm{h}^{\langle 0 \rangle}}\right) \\ \left(t_1,X^{(i)\langle 1 \rangle},\varpi^{\langle 1 \rangle}\right) \end{array}}$} ;
	\draw[->,draw=gray] (X1FE.north).. controls +(up:0mm) and +(down:0mm) .. (Input1.south);		

\draw[->,draw=blue,line width=0.25mm] (OutputL1at0.east) |- ++(0.31,0) |- (Input1.west);	
		
	\node[squarednode] at (6.6,-1.5) [draw=none,fill=none](OutputL1at1) {$\begin{array}{c}
	\mathrm{y}^{[1]}=\sigma^{[1]}\left( \mathrm{W}^{[1]}\mathrm{y}^{[0]} + \mathrm{b}^{[1]}\right)
	\\
	\mathrm{y}^{[0]}=\left( t_1, X^{(i)\langle 1 \rangle},\varpi^{\langle 1 \rangle}, \mathrm{a}^{\langle 1 \rangle}\right) 
	\\
	\mathrm{a}^{\langle 1 \rangle}=\sigma^{[1]}\left(\mathrm{W}_h^{[2]}\mathrm{h}^{\langle 1 \rangle}+\mathrm{b}_h^{[2]}\right)
	\\	
	{\color{blue}\mathrm{h}^{\langle 1 \rangle} = \sigma_h^{[1]}\left( \mathrm{W}_h^{[1]}\mathrm{y}_h^{\langle 1\rangle} + \mathrm{b}_h^{[1]}\right)}
	\end{array}$} ;
	\draw[->,draw=gray] (Input1.north).. controls +(up:0mm) and +(down:0mm) .. (OutputL1at1.south);				

	\node[squarednode] at (6.6,0.3) [draw=none,fill=none](OutputL2at1) {$\mathrm{y}^{[2]}=\sigma^{[2]}\left( \mathrm{W}^{[2]} \mathrm{y}^{[1]}+ \mathrm{b}^{[2]}\right)$} ;
	\draw[->,draw=gray] (OutputL1at1.north).. controls +(up:0mm) and +(down:0mm) .. (OutputL2at1.south);

	\node[squarednode] at (6.6,1.5) [draw=none,fill=none](OutputL3at1) {$\mathrm{y}^{[3]}= \mathrm{W}^{[3]} \mathrm{y}^{[2]}+ \mathrm{b}^{[3]}$} ;
	\draw[->,draw=gray] (OutputL2at1.north).. controls +(up:0mm) and +(down:0mm) .. (OutputL3at1.south);

	\node[squarednode] at (6.6,3.3) [draw=none,fill=none](OutputVat1) {${\color{blue}\mathrm{v}^{(i)\langle 1\rangle} = \mathrm{y}^{[3]}}$} ;
	\draw[->,draw=gray] (OutputL3at1.north).. controls +(up:0mm) and +(down:0mm) .. (OutputVat1.south);		

%%%%%%%% ... %%%%%%%%%%%%	
%%%%%%%% ... %%%%%%%%%%%%	
\node[squarednode] at (9.6,-1.5) [draw=none,fill=none](Pts) {${\color{blue}\ldots}$} ;
\draw[-,draw=blue,line width=0.25mm] (OutputVat1.east)  |- ++(1.9,0) -- (Pts.north);	
\draw[-,draw=blue,line width=0.25mm] (OutputL1at1.east)  |- ++(0,0) -- (Pts.west);	

%%%%%%%% tM %%%%%%%%%%%%
%%%%%%%% tM %%%%%%%%%%%%
	\node[squarednode] at (12.6, -1.5) [fill=none,minimum width=50mm,minimum height=22mm,dashed](RNNCellL1) {} ;
	\node[squarednode] at (12.6, 0.2) [fill=none,minimum width=50mm,minimum height=12mm,dashed](RNNCelL2) {} ;
	\node[squarednode] at (12.6, 1.4) [fill=none,minimum width=50mm,minimum height=12mm,dashed](RNNCelL3) {} ;	
	\node[squarednode] at (12.6, -0.3) [fill=none,minimum width=52mm,minimum height=48mm,solid](RNNCelL3) {} ;		
%%%%%%%% tM %%%%%%%%%%%%	
	\node[squarednode] at (12.6,-5.5) [draw=none,fill=none](XMFE) {$X^{(i)\langle M \rangle} = X^{(i)\langle M-1 \rangle} + \Delta_t {\color{blue} \mathrm{v}^{(i)\langle M-1 \rangle}}$} ;
\draw[->,draw=blue,line width=0.25mm] (Pts.south)  ++(0,0) |- (XMFE.west);	

	\node[squarednode] at (12.6,-4) [draw=none,fill=none](InputM) {${\renewcommand{\arraystretch}{1.5} \begin{array}{c} \mathrm{y}_h^{\langle M \rangle}=\left( Q^{\langle M \rangle}, {\color{blue} \mathrm{h}^{\langle M-1 \rangle}}\right) \\ \left(t_M,X^{(i)\langle M \rangle},\varpi^{\langle M \rangle}\right) \end{array}}$} ;
	\draw[->,draw=gray] (XMFE.north).. controls +(up:0mm) and +(down:0mm) .. (InputM.south);		

	\draw[->,draw=blue,line width=0.25mm] (Pts.south)  ++(0,0) |- (InputM.west);	
		
	\node[squarednode] at (12.6,-1.5) [draw=none,fill=none](OutputL1atM) {$\begin{array}{c}
	\mathrm{y}^{[1]}=\sigma^{[1]}\left( \mathrm{W}^{[1]}\mathrm{y}^{[0]} + \mathrm{b}^{[1]}\right)
	\\
	\mathrm{y}^{[0]}=\left( t_M, X^{(i)\langle M \rangle},\varpi^{\langle M \rangle}, \mathrm{a}^{\langle M \rangle}\right) 
	\\
	\mathrm{a}^{\langle M \rangle}=\sigma^{[1]}\left(\mathrm{W}_h^{[2]}\mathrm{h}^{\langle M \rangle}+\mathrm{b}_h^{[2]}\right)
	\\	
	{\color{blue}\mathrm{h}^{\langle M \rangle} = \sigma_h^{[1]}\left( \mathrm{W}_h^{[1]}\mathrm{y}_h^{\langle M\rangle} + \mathrm{b}_h^{[1]}\right)}
	\end{array}$} ;
	\draw[->,draw=gray] (InputM.north).. controls +(up:0mm) and +(down:0mm) .. (OutputL1atM.south);				

	\node[squarednode] at (12.6,0.3) [draw=none,fill=none](OutputL2atM) {$\mathrm{y}^{[2]}=\sigma^{[2]}\left( \mathrm{W}^{[2]} \mathrm{y}^{[1]}+ \mathrm{b}^{[2]}\right)$} ;
	\draw[->,draw=gray] (OutputL1atM.north).. controls +(up:0mm) and +(down:0mm) .. (OutputL2atM.south);

	\node[squarednode] at (12.6,1.5) [draw=none,fill=none](OutputL3atM) {$\mathrm{y}^{[3]}= \mathrm{W}^{[3]} \mathrm{y}^{[2]}+ \mathrm{b}^{[3]}$} ;
	\draw[->,draw=gray] (OutputL2atM.north).. controls +(up:0mm) and +(down:0mm) .. (OutputL3atM.south);

	\node[squarednode] at (12.6,3.3) [draw=none,fill=none](OutputVatM) {${\color{blue}\mathrm{v}^{(i)\langle M\rangle} = \mathrm{y}^{[3]}}$} ;
	\draw[->,draw=gray] (OutputL3atM.north).. controls +(up:0mm) and +(down:0mm) .. (OutputVatM.south);		
\end{tikzpicture}
}
\caption{Iteration of the RNN for $v^*$, $\mathrm{RNN}_v$, with supply history dependence.}
\label{fig:RNN history Q v}
\end{figure}
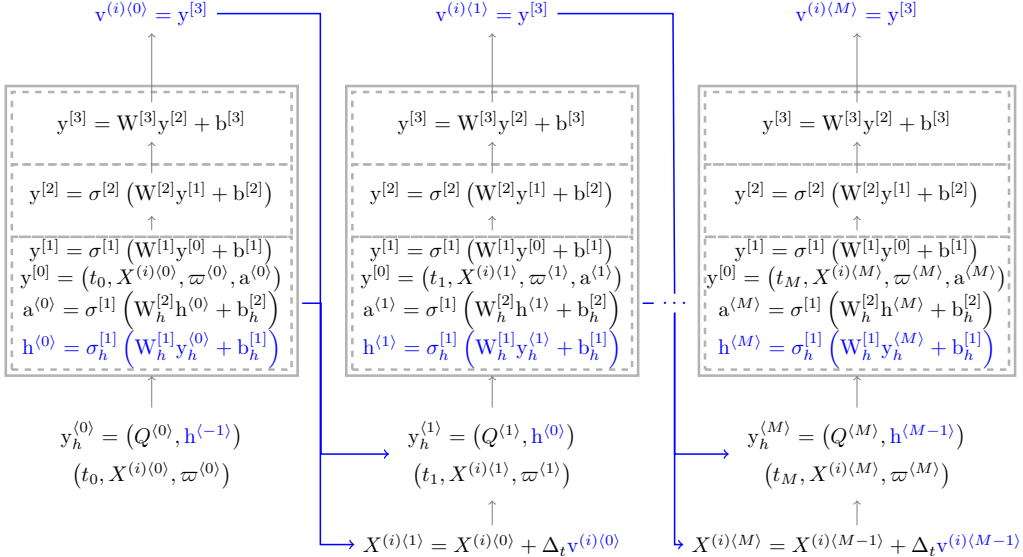

\begin{figure}[!htb]
	\centering
	\scalebox{0.8}{
\begin{tikzpicture}[
roundnode/.style={circle, draw=gray!60, fill=gray!5, very thick, minimum size=6mm},
squarednode/.style={rectangle, draw=gray!60, fill=gray!5, very thick, dashed},
%rectangle/.style={draw=gray!60, fill=gray!5, very thick, dashed},
]
%%%%%%%% t0 %%%%%%%%%%%%
%%%%%%%% t0 %%%%%%%%%%%%
	\node[squarednode] at (1, -1.5) [fill=none,minimum width=46mm,minimum height=22mm,dashed](RNNCellL1) {} ;
	\node[squarednode] at (1, 0.2) [fill=none,minimum width=46mm,minimum height=12mm,dashed](RNNCelL2) {} ;
	\node[squarednode] at (1, 1.4) [fill=none,minimum width=46mm,minimum height=12mm,dashed](RNNCelL3) {} ;	
	\node[squarednode] at (1, -0.3) [fill=none,minimum width=48mm,minimum height=48mm,solid](RNNCelL3) {} ;		
%%%%%%%% t0  %%%%%%%%%%%%	
	\node[squarednode] at (1,-3.5) [draw=none,fill=none](Input0) {$\mathrm{y}_h^{\langle 0 \rangle}=\left( Q^{\langle 0 \rangle}, \mathrm{h}^{\langle -1 \rangle}\right)$} ;
	
	\node[squarednode] at (1,-1.5) [draw=none,fill=none](OutputL1at0) {$\begin{array}{c}
	\mathrm{y}^{[1]}=\sigma^{[1]}\left( \mathrm{W}^{[1]}\mathrm{y}^{[0]} + \mathrm{b}^{[1]}\right)
	\\
	\mathrm{y}^{[0]}=\left( t_0, \mathrm{a}^{\langle 0 \rangle}\right) 
	\\	
	\mathrm{a}^{\langle 0 \rangle}=\sigma^{[1]}\left(\mathrm{W}_h^{[2]}\mathrm{h}^{\langle 0 \rangle}+\mathrm{b}_h^{[2]}\right)
	\\
	{\color{blue}\mathrm{h}^{\langle 0 \rangle} = \sigma_h^{[1]}\left( \mathrm{W}_h^{[1]}\mathrm{y}_h^{\langle 0\rangle} + \mathrm{b}_h^{[1]}\right)}
	\end{array}$} ;
	\draw[->,draw=gray] (Input0.north).. controls +(up:0mm) and +(down:0mm) .. (OutputL1at0.south);

	\node[squarednode] at (1,0.3) [draw=none,fill=none](OutputL2at0) {$\mathrm{y}^{[2]}=\sigma^{[2]}\left( \mathrm{W}^{[2]} \mathrm{y}^{[1]}+ \mathrm{b}^{[2]}\right)$} ;
	\draw[->,draw=gray] (OutputL1at0.north).. controls +(up:0mm) and +(down:0mm) .. (OutputL2at0.south);

	\node[squarednode] at (1,1.5) [draw=none,fill=none](OutputL3at0) {$\mathrm{y}^{[3]}= \mathrm{W}^{[3]} \mathrm{y}^{[2]}+ \mathrm{b}^{[3]}$} ;
	\draw[->,draw=gray] (OutputL2at0.north).. controls +(up:0mm) and +(down:0mm) .. (OutputL3at0.south);

	\node[squarednode] at (1,3.3) [draw=none,fill=none](OutputVat0) {$\varpi^{\langle 0\rangle} = \mathrm{y}^{[3]}$} ;
	\draw[->,draw=gray] (OutputL3at0.north).. controls +(up:0mm) and +(down:0mm) .. (OutputVat0.south);		
	
%%%%%%%% t1 %%%%%%%%%%%%
%%%%%%%% t1 %%%%%%%%%%%%
	\node[squarednode] at (6.6, -1.5) [fill=none,minimum width=46mm,minimum height=22mm,dashed](RNNCellL1) {} ;
	\node[squarednode] at (6.6, 0.2) [fill=none,minimum width=46mm,minimum height=12mm,dashed](RNNCelL2) {} ;
	\node[squarednode] at (6.6, 1.4) [fill=none,minimum width=46mm,minimum height=12mm,dashed](RNNCelL3) {} ;	
	\node[squarednode] at (6.6, -0.3) [fill=none,minimum width=48mm,minimum height=48mm,solid](RNNCelL3) {} ;		
%%%%%%%% t1 %%%%%%%%%%%%	

	\node[squarednode] at (6.6,-3.5) [draw=none,fill=none](Input1) {$\mathrm{y}_h^{\langle 1 \rangle}=\left( Q^{\langle 1 \rangle}, {\color{blue} \mathrm{h}^{\langle 0 \rangle}}\right)$} ;
\draw[->,draw=blue,line width=0.25mm] (OutputL1at0.east) |- ++(0.31,0) |- (Input1.west);

	\node[squarednode] at (6.6,-1.5) [draw=none,fill=none](OutputL1at1) {$\begin{array}{c}
	\mathrm{y}^{[1]}=\sigma^{[1]}\left( \mathrm{W}^{[1]}\mathrm{y}^{[0]} + \mathrm{b}^{[1]}\right)
	\\
	\mathrm{y}^{[0]}=\left( t_1, \mathrm{a}^{\langle 1 \rangle}\right) 
	\\
	\mathrm{a}^{\langle 1 \rangle}=\sigma^{[1]}\left(\mathrm{W}_h^{[2]}\mathrm{h}^{\langle 1 \rangle}+\mathrm{b}_h^{[2]}\right)
	\\	
	{\color{blue}\mathrm{h}^{\langle 1 \rangle} = \sigma_h^{[1]}\left( \mathrm{W}_h^{[1]}\mathrm{y}_h^{\langle 1\rangle} + \mathrm{b}_h^{[1]}\right)}
	\end{array}$} ;
	\draw[->,draw=gray] (Input1.north).. controls +(up:0mm) and +(down:0mm) .. (OutputL1at1.south);				

	\node[squarednode] at (6.6,0.3) [draw=none,fill=none](OutputL2at1) {$\mathrm{y}^{[2]}=\sigma^{[2]}\left( \mathrm{W}^{[2]} \mathrm{y}^{[1]}+ \mathrm{b}^{[2]}\right)$} ;
	\draw[->,draw=gray] (OutputL1at1.north).. controls +(up:0mm) and +(down:0mm) .. (OutputL2at1.south);

	\node[squarednode] at (6.6,1.5) [draw=none,fill=none](OutputL3at1) {$\mathrm{y}^{[3]}= \mathrm{W}^{[3]} \mathrm{y}^{[2]}+ \mathrm{b}^{[3]}$} ;
	\draw[->,draw=gray] (OutputL2at1.north).. controls +(up:0mm) and +(down:0mm) .. (OutputL3at1.south);

	\node[squarednode] at (6.6,3.3) [draw=none,fill=none](OutputVat1) {$\varpi^{\langle 1\rangle} = \mathrm{y}^{[3]}$} ;
	\draw[->,draw=gray] (OutputL3at1.north).. controls +(up:0mm) and +(down:0mm) .. (OutputVat1.south);		

%%%%%%%% ... %%%%%%%%%%%%	
%%%%%%%% ... %%%%%%%%%%%%	
\node[squarednode] at (9.6,-1.5) [draw=none,fill=none](Pts) {${\color{blue}\ldots}$} ;
\draw[-,draw=blue,line width=0.25mm] (OutputL1at1.east)  |- ++(0,0) -- (Pts.west);	

%%%%%%%% tM %%%%%%%%%%%%
%%%%%%%% tM %%%%%%%%%%%%
	\node[squarednode] at (12.6, -1.5) [fill=none,minimum width=50mm,minimum height=22mm,dashed](RNNCellL1) {} ;
	\node[squarednode] at (12.6, 0.2) [fill=none,minimum width=50mm,minimum height=12mm,dashed](RNNCelL2) {} ;
	\node[squarednode] at (12.6, 1.4) [fill=none,minimum width=50mm,minimum height=12mm,dashed](RNNCelL3) {} ;	
	\node[squarednode] at (12.6, -0.3) [fill=none,minimum width=52mm,minimum height=48mm,solid](RNNCelL3) {} ;		
%%%%%%%% tM %%%%%%%%%%%%	

	\node[squarednode] at (12.6,-3.5) [draw=none,fill=none](InputM) {$\mathrm{y}_h^{\langle M \rangle}=\left( Q^{\langle M \rangle}, {\color{blue} \mathrm{h}^{\langle M-1 \rangle}}\right)$} ;
\draw[->,draw=blue,line width=0.25mm] (Pts.south)  ++(0,0) |- (InputM.west);	
		
	\node[squarednode] at (12.6,-1.5) [draw=none,fill=none](OutputL1atM) {$\begin{array}{c}
	\mathrm{y}^{[1]}=\sigma^{[1]}\left( \mathrm{W}^{[1]}\mathrm{y}^{[0]} + \mathrm{b}^{[1]}\right)
	\\
	\mathrm{y}^{[0]}=\left( t_M, \mathrm{a}^{\langle M \rangle}\right) 
	\\
	\mathrm{a}^{\langle M \rangle}=\sigma^{[1]}\left(\mathrm{W}_h^{[2]}\mathrm{h}^{\langle M \rangle}+\mathrm{b}_h^{[2]}\right)
	\\	
	{\color{blue}\mathrm{h}^{\langle M \rangle} = \sigma_h^{[1]}\left( \mathrm{W}_h^{[1]}\mathrm{y}_h^{\langle M\rangle} + \mathrm{b}_h^{[1]}\right)}
	\end{array}$} ;
	\draw[->,draw=gray] (InputM.north).. controls +(up:0mm) and +(down:0mm) .. (OutputL1atM.south);				

	\node[squarednode] at (12.6,0.3) [draw=none,fill=none](OutputL2atM) {$\mathrm{y}^{[2]}=\sigma^{[2]}\left( \mathrm{W}^{[2]} \mathrm{y}^{[1]}+ \mathrm{b}^{[2]}\right)$} ;
	\draw[->,draw=gray] (OutputL1atM.north).. controls +(up:0mm) and +(down:0mm) .. (OutputL2atM.south);

	\node[squarednode] at (12.6,1.5) [draw=none,fill=none](OutputL3atM) {$\mathrm{y}^{[3]}= \mathrm{W}^{[3]} \mathrm{y}^{[2]}+ \mathrm{b}^{[3]}$} ;
	\draw[->,draw=gray] (OutputL2atM.north).. controls +(up:0mm) and +(down:0mm) .. (OutputL3atM.south);

	\node[squarednode] at (12.6,3.3) [draw=none,fill=none](OutputVatM) {$\varpi^{\langle M\rangle} = \mathrm{y}^{[3]}$} ;
	\draw[->,draw=gray] (OutputL3atM.north).. controls +(up:0mm) and +(down:0mm) .. (OutputVatM.south);		
\end{tikzpicture}
}
\caption{Iteration of the RNN for $\varpi$, $\mathrm{RNN}_\varpi$, with supply history dependence.}
\label{fig:RNN history Q w}
\end{figure}
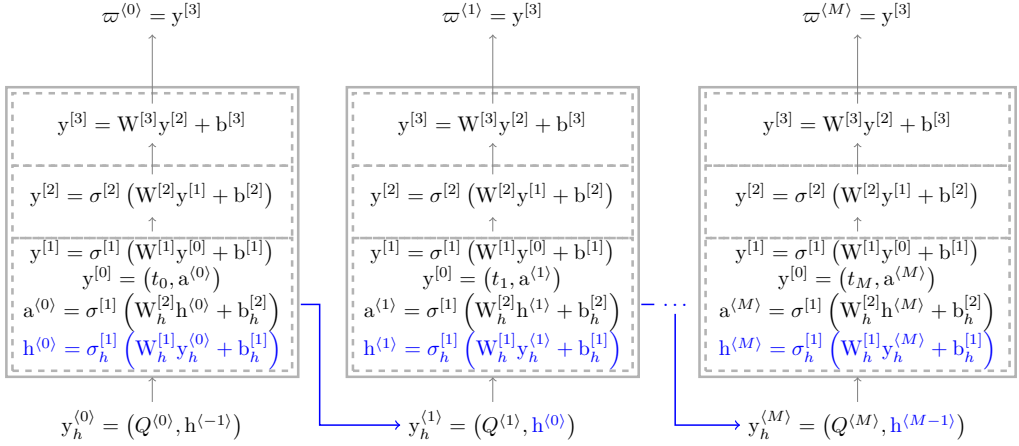

\begin{remark}%\label{remark:Xbar}
The motivation for the implementation of RNN in our case is the following. Rather than looking at the price MFG model as being defined by two variables $u$ and $m$ (in addition to the price), we can take the master equation framework. In this case, we regard $v^*$ as a function of the current time, the state of a player, and the whole distribution of players
\[
	v^*= \mathrm{V}(t,x,m).
\]
Because $m$ is infinite-dimensional, we can not discretize this class of controls directly. However, the dependence on $m$ can be captured by the dependence of $m$ on $Q$. For instance, under the balance condition, the mean distribution of players
\[
	\overline{X}(t) = \int_{\Rr} x m(t,x) dx, \quad t\in[0,T],
\]
satisfies $\dot{\overline{X}} = Q$. Therefore, the mean of $m$ can be recovered using the history of $Q$ only since
%\[
%	\overline{X}(t) = \int_{\Rr} x m_0(x) dx + q(t),\quad t \in [0,T],
%\]
%where $q(t)=\int_0^t Q(s)ds$, $t\in[0,T]$. Therefore, given $\tilde{v}:[0,T]\times \Rr \to \Rr$, define $\tilde{m}$ according to \eqref{eq:Auxiliary transport eq}, and notice that the third equation in \eqref{eq:MFG system} is equivalent to requiring that $\overline{X}$, defined by $\tilde{m}$ according to \eqref{def:Xbar continuous}, satisfies 
\begin{equation*}
	\overline{X}(t) = \int_{\Rr} x m_0(x)dx +\int_0^t Q(s)ds, \quad t \in [0,T].
\end{equation*}
Section \ref{sec: Numerical results} shows that the mean of $m$ characterizes the price equilibrium in the linear-quadratic model. Therefore, capturing the evolution of $Q$ using a recurrent structure provides enough information to approximate the price, which determines $v^*$.

%Moreover, if the previous identity holds, the imbalance function \eqref{def:imbalance function} verifies $\mathcal{I} \equiv 0$ on $[0,T]$. Therefore, we expect the path dependence at time $t$ to be only through the values of $Q$ on the time horizon $[0,t]$. The RNN architecture can approximately capture this dependence because at time $t_k$, its output depends on the values of the inputs at times $t_0,\ldots,t_{k-1}$.
\end{remark}

\begin{remark}
In the first architecture, the approximations of $v^{\langle k \rangle}$ and $\varpi^{\langle k \rangle}$ are of the form
\begin{align*}
	& \mathrm{v}^{\langle k \rangle} = \mathrm{MLP}_v (t_k,X^{\langle k \rangle},\varpi^{\langle k \rangle},\mathrm{v}^{\langle k-1 \rangle}), \quad \varpi^{\langle k \rangle}= \mathrm{MLP}_\varpi (t_k,Q^{\langle k \rangle}),
\end{align*}
for $k=0,\ldots,M-1$, respectively. Therefore, $\mathrm{v}^{\langle k \rangle}$ itself propagates information along the temporal direction, inducing a recurrent dependence, whereas $\varpi^{\langle k \rangle}$ has no recurrent dependence. For the second architecture, we have
\begin{align*}
	& \mathrm{v}^{\langle k \rangle} = \mathrm{RNN}_v (t_k,X^{\langle k \rangle},\varpi^{\langle k \rangle},\mathrm{v}^{\langle k-1 \rangle},Q^{\langle k \rangle},\mathrm{h}^{\langle k-1 \rangle}), \quad \varpi^{\langle k \rangle}= \mathrm{RNN}_\varpi (t_k,Q^{\langle k \rangle},\mathrm{h}^{\langle k-1 \rangle}),
\end{align*}
for $k=0,\ldots,M-1$. Thus, the recurrent dependence in the second architecture relies not only on $\mathrm{v}^{\langle k \rangle}$ but also on the hidden state $\mathrm{h}$ to propagate information for both $v^*$ and $\varpi$. Therefore, compared to the first one, the second architecture represents a bigger class of functions in which the approximations of $v^*$ and $\varpi$ are obtained. To see this, notice that taking all weights associated with the hidden state (which includes the supply) in the second architecture equal to zero, we formally recover the first architecture. 
%Moreover, we can give a relative weight to the input variables by considering the magnitude of the weight matrix associated to each input. 
%{\color{red} Continue here}
\end{remark}

\subsection{Neural Network loss function}

As usual in ML framework, we obtain an approximation in a class of NN by minimizing a loss function $\mathcal{L}$. For a fixed architecture, $\mathcal{L}$ maps a parameter $\Theta$  to a real number $\mathcal{L}(\Theta)$. 
Our goal is to minimize $\mathcal{L}$ over the parameters defined by the ML architecture we select.

Recalling the discussion in Section \ref{sec:The mfg price problem}, we consider two NN. The first NN, $\mathrm{NN}_{v}$, approximates $v^*$ in \eqref{eq:optimal feedback} using the parameters $\Theta_{v}$. The second NN, $\mathrm{NN}_{\varpi}$ approximates $\varpi$ using the parameters $\Theta_{\varpi}$. The loss function that we consider is given by the time-discretization of the functional \eqref{eq:Saddle points functional}. Because this functional depends on the approximation of both $v^*$ and $\varpi$ provided by $\mathrm{NN}_{\varpi}$ and $\mathrm{NN}_{v}$, respectively, the loss function is the same regardless of the architecture that we consider. We discretize the functional \eqref{eq:Saddle points functional} using the time discretization. At a time step $t_k$, $\mathrm{NN}_\varpi$ computes $\varpi^{\langle k \rangle}$. The forward-Euler discretization \eqref{eq:NN Forward dynamics} updates the state variable $X^{(i)\langle k+1 \rangle}$ for the $i$-th trajectory, where $\mathrm{NN}_v$ computes $\mathrm{v}^{(i)\langle k \rangle}(\Theta_v)$ for $k=0,\ldots,M-1$ and $i=1,\ldots,N$. After generating these trajectories, we compute the following loss function
\begin{align}\label{eq:Loss adversarial}
\mathcal{L}\left( \Theta_{v},\Theta_{\varpi}\right) 
& = \frac{1}{N} \sum_{i=1}^N \Bigg( \sum_{k=0}^{M-1}   \Delta_t \Big( L(X^{(i)\langle k \rangle},\mathrm{v}^{(i)\langle k \rangle}(\Theta_{v})) 
\\
& \qquad \qquad \qquad + \varpi^{\langle k \rangle}(\Theta_{\varpi})\left( \mathrm{v}^{(i)\langle k \rangle}(\Theta_{v}) - Q^{\langle k \rangle}\right)\Big) + u_T(X^{(i)\langle M \rangle})\Bigg) .\nonumber
\end{align}
Using $\mathcal{L}$, we train $\mathrm{NN}_{v}$ and $\mathrm{NN}_\varpi$ using the algorithm we present in Section \ref{sec:training algorithm}. 

\subsection{Numerical implementation of a posteriori estimates}
Given $v^{(i)\langle k \rangle}$ for $1 \leq i \leq N$  and $1 \leq k \leq M-1$, we compute $X^{(i)\langle k \rangle}$ for $1 \leq i \leq N$  and $1 \leq k \leq M$ using the forward-Euler dynamics \eqref{eq:NN Forward dynamics}. Next, we compute the adjoint state according to
\[
	P^{(i)\langle k \rangle} = - \left( L_v(X^{(i)\langle k \rangle},v^{(i)\langle k \rangle}) + {\varpi^N}^{\langle k \rangle} \right),
\]
for $1 \leq i \leq N$ and $1 \leq k \leq M-1$. Then, the discretization \eqref{eq:NN Forward dynamics} is equivalent to 
\[
	\frac{1}{\Delta_t} \left( X^{(i)\langle k+1 \rangle}-X^{(i)\langle k \rangle} \right) = -H_p(X^{(i)\langle k \rangle}(t),P^{(i)\langle k \rangle}+{\varpi^N}^{\langle k \rangle}), \quad X^{(i)\langle 0 \rangle} = x^i_0,
\]
for $1 \leq i \leq N$ and $1 \leq k \leq M-1$. We compute the error terms in \eqref{eq:HSystem N agents penalized} according to
\begin{equation*}
	\begin{cases}
	\epsilon^{(i)\langle k \rangle} = \frac{1}{\Delta_t} \left( P^{(i)\langle k+1 \rangle}-P^{(i)\langle k \rangle} \right) - H_x(X^{(i)\langle k \rangle}(t),P^{(i)\langle k \rangle}+{\varpi^N}^{\langle k \rangle}) ,
	\\
	\epsilon_T{(i)} = u'_T(X^{(i)\langle M \rangle}) - P^{(i)\langle M \rangle}
	\\
	\epsilon_q^{\langle k \rangle} = \frac{1}{N}\sum\limits_{i=1}^N -H_p\left(X^{(i)\langle k \rangle},P^{(i)\langle k \rangle}+{\varpi^N}^{\langle k \rangle}\right) -Q^{\langle k \rangle}, 
	\end{cases}
\end{equation*} 
%\begin{equation*}
%	\begin{cases}
%	\frac{1}{\Delta_t} \left( P^{(i)\langle k+1 \rangle}-P^{(i)\langle k \rangle} \right) = H_x(X^{(i)\langle k \rangle}(t),P^{(i)\langle k \rangle}+{\varpi^N}^{\langle k \rangle}) ,
%	\\
%	\frac{1}{\Delta_t} \left( X^{(i)\langle k+1 \rangle}-X^{(i)\langle k \rangle} \right) = -H_p(X^{(i)\langle k \rangle}(t),P^{(i)\langle k \rangle}+{\varpi^N}^{\langle k \rangle}),
%	\\
%	\frac{1}{N}\sum\limits_{i=1}^N -H_p(X^{(i)\langle k \rangle},P^{(i)\langle k \rangle}+{\varpi^N}^{\langle k \rangle}) =Q^{\langle k \rangle}, 
%	\end{cases}
%\end{equation*} 
for $1 \leq i \leq N$ and $1 \leq k \leq M-1$. We use the discrete $L^2$ norm of the vectors 
\begin{align*}
	\bepsilon^{\langle k \rangle} = (\epsilon^{(1)\langle k \rangle} , \ldots ,\epsilon^{(N)\langle k \rangle}), \quad \bepsilon_T=(\epsilon^{(1)}_T,\ldots,\epsilon^{(N)}_T), \quad \epsilon_q^{\langle k \rangle}
\end{align*}
for $1 \leq k \leq M-1$ to implement the estimate in Theorem \ref{Prop:Main result} and assess the convergence of the training process that we introduce in Section \ref{sec:training algorithm}.

%-----------------------TRAINING ALGORITHM-------------------
%-----------------------TRAINING ALGORITHM-------------------
%-----------------------TRAINING ALGORITHM-------------------
%-----------------------TRAINING ALGORITHM-------------------
%-----------------------TRAINING ALGORITHM-------------------
%-----------------------TRAINING ALGORITHM-------------------
%-----------------------TRAINING ALGORITHM-------------------
%-----------------------TRAINING ALGORITHM-------------------

\section{Algorithm formulation and convergence discussion}
\label{sec:training algorithm}

Here, we present the numerical algorithm to approximate $\varpi$ and $v^*$ using NN, and we study its convergence properties. This algorithm relies on a dual-ascent interpretation of the variational problem \eqref{eq:Nagent minmax problem}. 

Notice that the MFG \eqref{eq:MFG system} can be decoupled once $\varpi$ is known. Moreover, if the optimal vector field $v^*$ in \eqref{eq:optimal feedback} is known, the transport equation, the second equation in \eqref{eq:MFG system} for $m$, can be solved by computing the push-forward of $m_0$ under $v^*$. 

Next, we recall some results of convex optimization (see \cite{BoydConvex}, \cite{Ryu2015APO}). Let 
\[
\mathrm{L}: \Rr^n \times \Rr^m \to \Rr\cup\{\pm \infty\}
\] 
be convex in the first variable and concave in the second. In the context of convex optimization, we call
\[
	\inf_{x\in\Rr^n} \sup_{\lambda \in \Rr^m} \mathrm{L}(x,\lambda)\quad \mbox{ and } \quad \sup_{\lambda\in \Rr^m} \inf_{x\in\Rr^n}\mathrm{L}(x,\lambda)
\]
the primal and dual problems generated by $\mathrm{L}$, and we denote its values by $p^\star$ and $d^\star$, respectively. Weak duality states that 
\[
d^\star \leq p^\star,
\]
which always holds. Strong duality states that
\[
d^\star = p^\star,
\]
which is not always satisfied. In finite dimensions, a stronger result can be obtained from the existence of saddle points. We say that $(x^\star,\lambda^\star) \in \Rr^n \times \Rr^m$ is a saddle point of $\mathrm{L}$ if
\[
	\mathrm{L}(x^\star,\lambda) \leq \mathrm{L}(x^\star,\lambda^\star) \leq \mathrm{L}(x,\lambda^\star), \quad x \in \Rr^n, \lambda \in \Rr^m.
\]
If $\mathrm{L}$ possesses a saddle point, then strong duality holds and the infimum and supremum in the primal and dual problems are attained. Conversely, if both the primal and dual problems have solutions, strong duality implies the existence of at least one saddle point.

Constrained optimization offers an important instance of saddle point problems. Let $f: \Rr^n \to \Rr \cup \{\pm \infty \}$, and $g: \Rr^n \to \Rr^m$. To solve the constraint optimization problem
\begin{equation}\label{eq:Constraint min problem general}
 \inf_{x \in \Rr^n} f(x), \quad \mbox{subject to } g(x) =0,
\end{equation}
using the method of multipliers, we select
\begin{equation}\label{eq:Lagrangian method multipliers}
	\mathrm{L}(x,\lambda)= f(x) + \lambda \cdot g(x), \quad x\in\Rr^n, \lambda \in \Rr^m.
\end{equation}
In the case that $\mathrm{L}$ is convex in $x$ and concave in $\lambda$,  
the saddle sub-differential operator
\[
(x,\lambda)\mapsto (\mathrm{L}_x(x,\lambda),-\mathrm{L}_\lambda(x,\lambda))
\]
is a monotone operator. In particular, for $\mathrm{L}$ given by \eqref{eq:Lagrangian method multipliers}, with $f$ convex and $g$ affine, the saddle sub-differential operator defines the Karush-Kuhn-Tucker condition 
\[
	 (\mathrm{L}_x(x,\lambda),-\mathrm{L}_\lambda(x,\lambda)) = (f_x(x) + \lambda g_x(x),-g(x)) = 0,
\]
which characterizes solutions $x^\star$ of the constraint minimization problem \eqref{eq:Constraint min problem general} using a Lagrange multiplier $\lambda^\star$. 

One method to find saddle points of convex-concave functions is the dual ascent method, which considers the sequence of updates
\[
	x^{j+1} = \argmin_{x \in \Rr^n} \mathrm{L}(x,\lambda^j),\quad \lambda^{j+1} = \lambda^j +  \Delta_j g(x^{j+1}),
\]
where $ \Delta_j>0$ is the dual step size. The update on the variable $x$ can be relaxed, as introduced by the Arrow-Hurwicz-Uzawa algorithm (see \cite{Ryu2015APO}). 
\begin{equation*}%\label{eq:Uzawa updates}
	x^{j+1} = x^{j} -  \Delta_j \left( f_x(x^j) + \lambda^j  g_x(x^j)\right),\quad \lambda^{j+1} = \lambda^j +  \Delta_j g(x^{j+1}).
\end{equation*}

As discussed in Section \ref{sec:The mfg price problem}, we are interested in finding saddle points of the functional \eqref{eq:Saddle points functional}. Under Assumptions \ref{hyp:L uniformly convex} and \ref{hyp:uT convex DS}, this functional is convex-concave in $(\bv,\tilde{\varpi}^N)$, and  \eqref{eq:Nagent minmax problem} corresponds to its dual problem. However, the parameters in the NN are not $v$ and $\varpi$ but rather the weights $\Theta_v$ and $\Theta_\varpi$. Thus, instead of performing a descent step in $v$ and ascent step in $\varpi$, we modify Uzawa's algorithm to perform a descent step in $\Theta_v$ and ascent step in $\Theta_\varpi$. This procedure is outlined in Algorithm \ref{alg:our algorithm}. Notice that the updates in Algorithm \ref{alg:our algorithm} do not benefit from the monotone structure of the functional \eqref{eq:Saddle points functional} because the convex-concave structure is not preserved by the dependence of $\mathcal{L}$ on $(\Theta_v,\Theta_\varpi)$. Nevertheless, this algorithm works well in practice, and its convergence can be checked through our a posteriori estimates. 
 
\begin{algorithm}
    \SetKwInOut{Input}{Input}
    \SetKwInOut{Output}{Output}
    \Input{initial density $m_0$, supply $Q = (Q(t_k))_{k =0,\dots,M}$, number of outer iterations $J$, number of time steps $M$, number of samples $N$}
    Initialize $\Theta_v^1,\Theta_\varpi^1$\; 
	\For{$j=1,\ldots,J$}{
	sample $x_0^1,\ldots,x_0^N$ according to $m_0$\;
		\For{$k=0,\ldots,M$}{
			compute $\varpi^{\langle k \rangle}$ using $\mathrm{NN}_\varpi(\Theta_\varpi^j)$\;
			\For{$i=1,\ldots,N$}{
				compute $\mathrm{v}^{(i)\langle k \rangle}$ using $\mathrm{NN}_v(\Theta_v^j)$\;
				compute $X^{(i)\langle k+1 \rangle}$ according to \eqref{eq:NN Forward dynamics}\;
				}
			}				
	compute $\mathcal{L}(\Theta_v^j,\Theta_\varpi^j)$ according to \eqref{eq:Loss adversarial}\;
	compute $\Theta_v^{j+1}$ by updating $\Theta_v^{j}$ in the descent direction $\mathcal{L}_{\Theta_v}(\Theta_v^j,\Theta_\varpi^j)$\;
	\For{$k=0,\ldots,M$}{
			compute $\varpi^{\langle k \rangle}$ using $\mathrm{NN}_\varpi(\Theta_\varpi^j)$\;
			\For{$i=1,\ldots,N$}{
				compute $\mathrm{v}^{(i)\langle k \rangle}$ using $\mathrm{NN}_v(\Theta_v^{j+1})$\;
				compute $X^{(i)\langle k+1 \rangle}$ according to \eqref{eq:NN Forward dynamics}\;
				}
			}	
	compute $\mathcal{L}(\Theta_v^{j+1},\Theta_\varpi^j)$ according to \eqref{eq:Loss adversarial}\;		
	compute $\Theta_\varpi^{j+1}$ by updating $\Theta_\varpi^{j}$ in the ascent direction $\mathcal{L}_{\Theta_\varpi}(\Theta_v^{j+1},\Theta_\varpi^j)$\;
	}
    \Output{$\Theta_\varpi^J$, $\Theta_v^J$}
%    \eIf{$b=0$}
%      {
%        return $a$\;
%      }
%      {
%        return Euclid$(b,a\mod b)$\;
%      }
    \caption{Training algorithm}
    \label{alg:our algorithm}
\end{algorithm}

%Yet, based on the universal approximation properties of NN (\cite{HORNIK1989359}, \cite{HORNIK1991251}), the updates of the NN parameters of Algorithm \ref{alg:our algorithm} provide the best approximator of the solution in the class of functions specified by the ML architecture. 

The training procedure in Algorithm \ref{alg:our algorithm} can be interpreted as an adversarial-like training between $\mathrm{NN}_v$ and $\mathrm{NN}_\varpi$. At each training step, by sampling $N$ initial positions according to $m_0$, we introduce a sample of the optimization problem faced by a representative agent. Updating $\Theta_v$ in the descent direction of  \eqref{eq:Loss adversarial} when $\Theta_\varpi$ is fixed corresponds to the approximation of the best strategy to minimize the cost functional per agent. Maximizing $\Theta_\varpi$ in the ascent direction of  \eqref{eq:Loss adversarial} when $\Theta_v$ is fixed penalizes the deviation of the current best strategy from the balance condition. Thus, iterating over training steps, both NN updates their values according to the best response of the other.

%--------------NUMERICAL RESULTS--------------------
%--------------NUMERICAL RESULTS--------------------
%--------------NUMERICAL RESULTS--------------------
%--------------NUMERICAL RESULTS--------------------
%--------------NUMERICAL RESULTS--------------------

\section{Numerical results}
\label{sec: Numerical results}

Here, we present the results of implementing Algorithm \ref{alg:our algorithm} to the price formation model with a mean-reverting supply. We consider quadratic and non-quadratic cost structures. We illustrate the use of the a posteriori estimates presented in Section \ref{sec:A posteriori estimates} to assess the convergence of the training method. We consider two different behaviors for the supply function to compare the performance of the MLP and the RNN architectures. The results show that both NN provide accurate approximations for $v^*$ and $\varpi$. 

We set $T=1$ and $M=30$ time steps equally spaced for the time discretization. We assume that the supply satisfies the mean-reverting ODE 
\[
	Q'(t) = \left(\overline{Q}(t) -Q(t)\right) , \quad Q(0)=q_0, \quad t \in [0,T]
\]
towards the mean supply $\overline{Q}$. We consider two different functions $\overline{Q}$: constant and oscillating functions. Thus,
\[
	Q(t)=q_0 e^{-t} + e^{-t} \int_0^t e^s \overline{Q}(s)ds, \quad t \in [0,T].
\]
The choice of $M$ guarantees that, after discretization of the time variable, we retain the features of the quasi-periodic supply corresponding to the oscillatory $\overline{Q}$. We take $\overline{m}_0=-\tfrac{1}{4}$ and $m_0 \sim \mathcal{N}(\overline{m}_0,0.4)$ for the initial distribution. After several numerical experiments, we set the sample size for the training process to $N=10$. The choice of $N$ guarantees accurate results with low computational cost. 

To avoid tailored results depending on the cost structure (quadratic or non-quadratic) or the supply dynamics (oscillatory or non-oscillatory), we use the NN hyper-parameters (the number of layers and neurons) specified in Section \ref{sec:NN for price}. Moreover, we iterate Algorithm \ref{alg:our algorithm} for $200.000$ iterations without regard for the cost or supply form. The a posteriori estimate in Theorem \ref{Prop:Main result} is computed for the training set of initial positions every $500$ iterations (1 epoch). After training, we compare the results of the two architectures in each scenario.

\subsection{Quadratic cost}
We select
\[
	L(x,v)=\frac{\eta}{2} \left( x - \kappa\right)^2 + \frac{c}{2} v^2, \quad \mbox{and} \quad u_T\left(x\right) = \frac{\gamma}{2}\left(x-\zeta\right)^2,
\]
where $\kappa,\zeta \in \Rr$, $\eta,\gamma\geq 0$, and $c>0$. In this setting, the Hamilton-Jacobi equation in \eqref{eq:MFG system} admits quadratic solutions in $x$ with time-dependent coefficients 
\[
	u(t,x)=a_0(t) + a_1(t)x + a_2(t)x^2, \quad t\in[0,T], \; x \in \Rr,
\]
where the coefficients $a_0$, $a_1$, and $a_2$ are determined by an ODE system (\cite{gomes2018mean}, \cite{gomes2021randomsupply}). Moreover, the price admits the following explicit formula
\begin{equation}\label{eq:LQ price formula}
	\varpi(t)=\eta \left(\kappa-\overline{m}_0\right)\left(T-t\right)+\gamma\left( \zeta - \overline{m}_0\right) - \eta \int_t^T \int_0^s Q(r) dr ds - \gamma \int_0^T Q(s) ds - c Q(t),
\end{equation}
for $t\in [0,T]$, where $\overline{m}_0 = \int_{\Rr} x m_0(x) dx$, and \eqref{eq:optimal feedback} reduces to
\[
	v^*(t,x) = - \frac{1}{c}\left(\varpi(t) + a_1(t) + 2a_2(t)x\right), \quad t\in[0,T], \; x \in \Rr.
\]
We use the previous expressions as benchmarks for the approximation obtained using NN.
To obtain closed-form solutions, we select the parameters 
\[
	c=1,\quad  \gamma=e^{-1},\quad \zeta=1,\quad \eta=1,\quad \kappa=1.
\]
Because of the dependence of the price on the mean of $m_0$ (see \eqref{eq:LQ price formula}), the price approximation improves as the sample mean approaches $\overline{m}_0$. However, because the convergence in the law of large numbers is not monotone in $N$, a large sample, for instance, $N=1000$, does not necessarily provide better results than a small sample, for instance, $N=10$. 

\subsubsection{\bf Constant mean-reverting function}
The first case we consider is 
\[
	\overline{Q}(t) \equiv 1, \quad q_0=\tfrac{1}{10}.
\]
We illustrate the analytical solutions for $\varpi$, $v^*$, and $m$ in Figure \ref{fig:Qbar constant analres}. The plot for $m$ includes the characteristic curves with initial position $x_0 \in [-1,1]$. Because in Algorithm \ref{alg:our algorithm} we sample $x_0$ according to $m_0$, we see that regions in the $(t,x)$ plane where $m(t,x)$ is high are likely to be better explored by $\mathrm{MLP}_v$. Moreover, the approximation error should be highlighted in dense areas. Thus, we evaluate the error in the approximation of $v^*$  over the domain $[0,T]\times [-1,1]$ using not only the absolute error but also the $m$-weighted error; that is,
\[
	\left|v^*(t,x)-\mathrm{MLP}_v(t,x) \right|m(t,x), \quad (t,x)\in[0,T]\times[-1,1].
\]
 
%%%%%%Analytical sol%%%%%%%%%%
%%%%%%Analytical sol%%%%%%%%%%
%%%%%%Analytical sol%%%%%%%%%%
\begin{figure}[htp]
     \centering
     \begin{subfigure}[t]{0.32\textwidth}
         \centering
         \caption{Supply and price}
         \includegraphics[width=\textwidth]{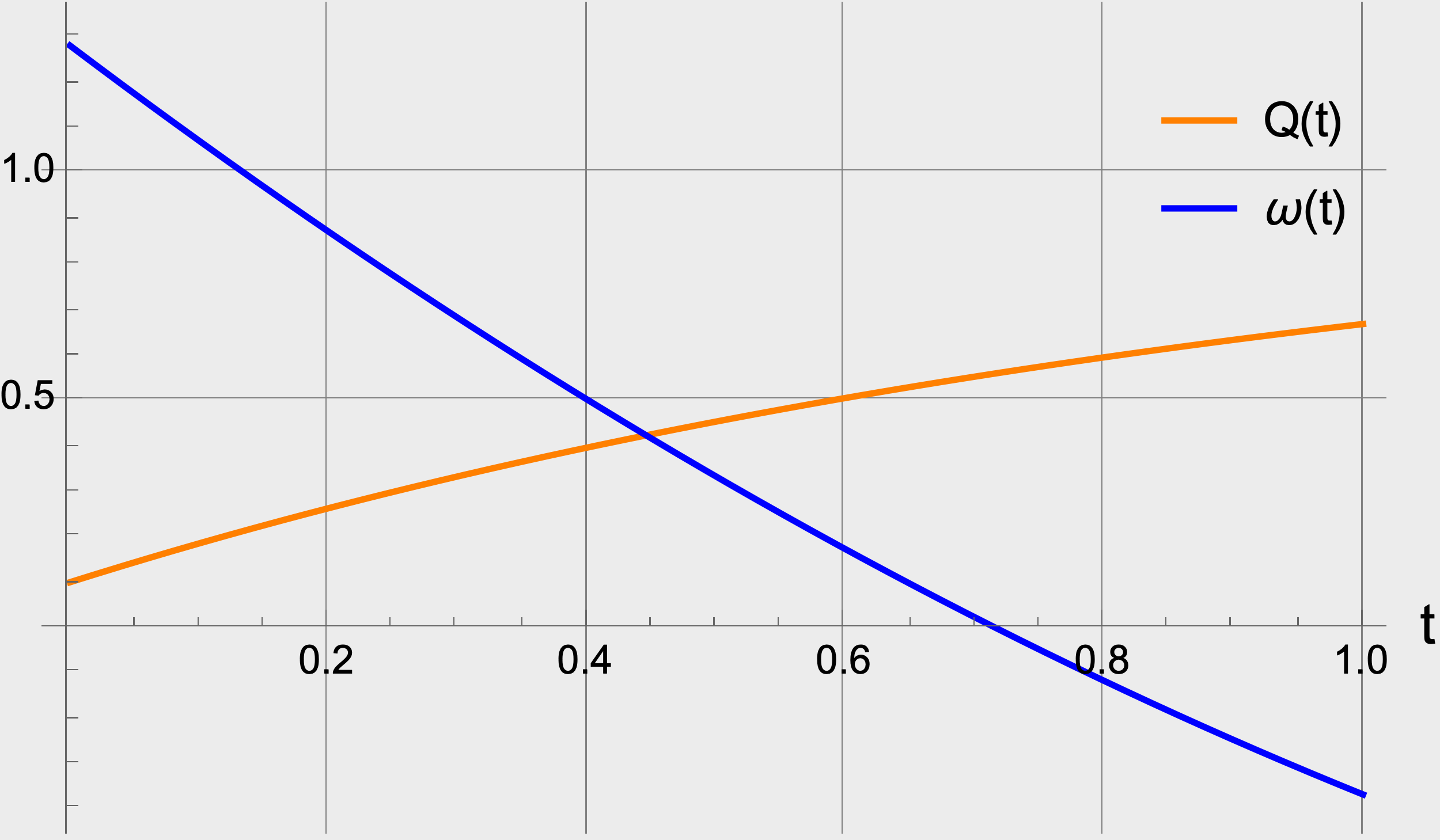}
%         \label{fig:Qbar constant analres A}
     \end{subfigure}
     \hfill
     \begin{subfigure}[t]{0.32\textwidth}
         \centering
        \caption{Optimal feedback $v^*$}
         \includegraphics[width=\textwidth]{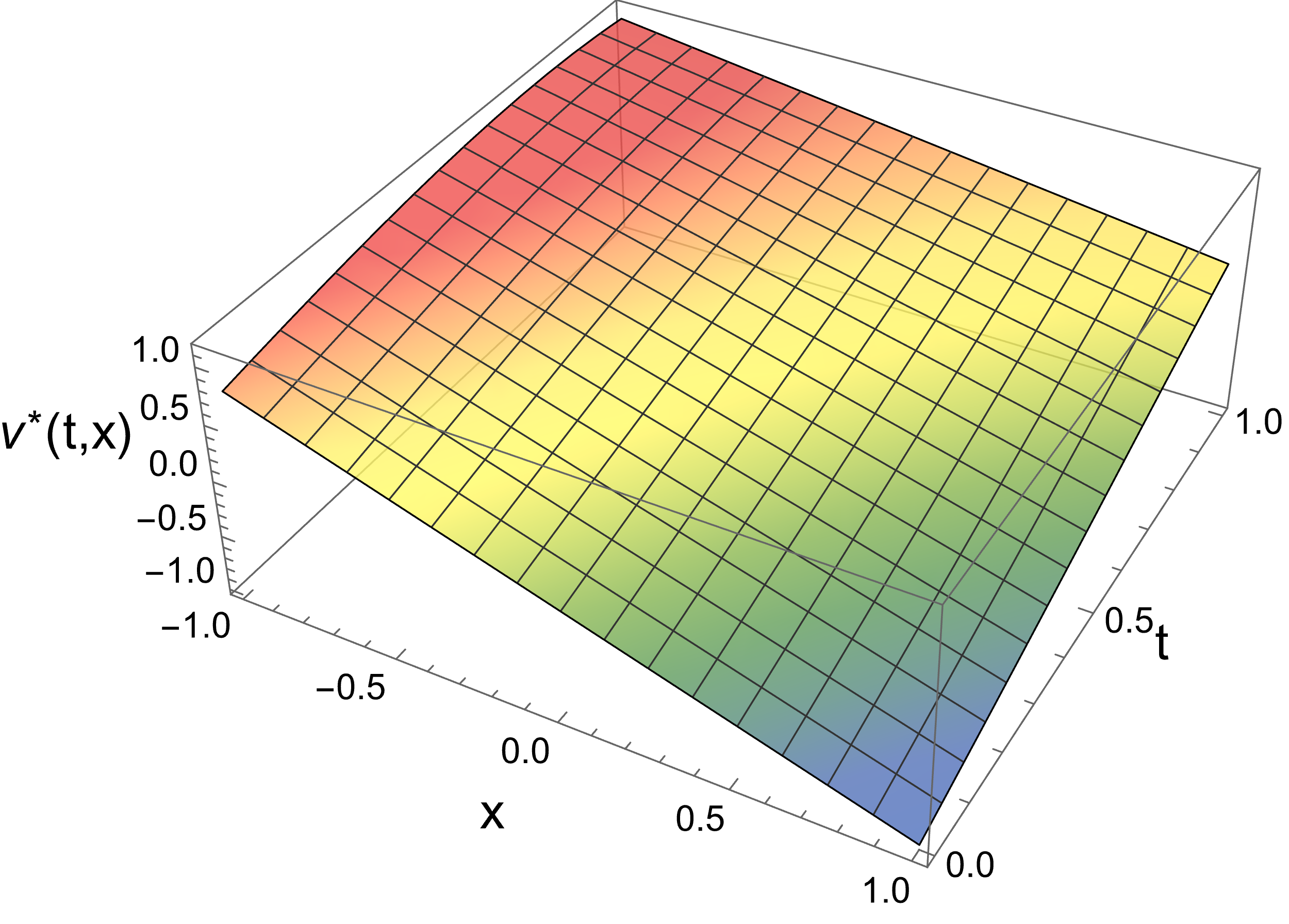}
%         \label{fig:Qbar constant analres B}
     \end{subfigure}
     \hfill
     \begin{subfigure}[t]{0.32\textwidth}
         \centering
        \caption{$m$ and characteristics}
         \includegraphics[width=\textwidth]{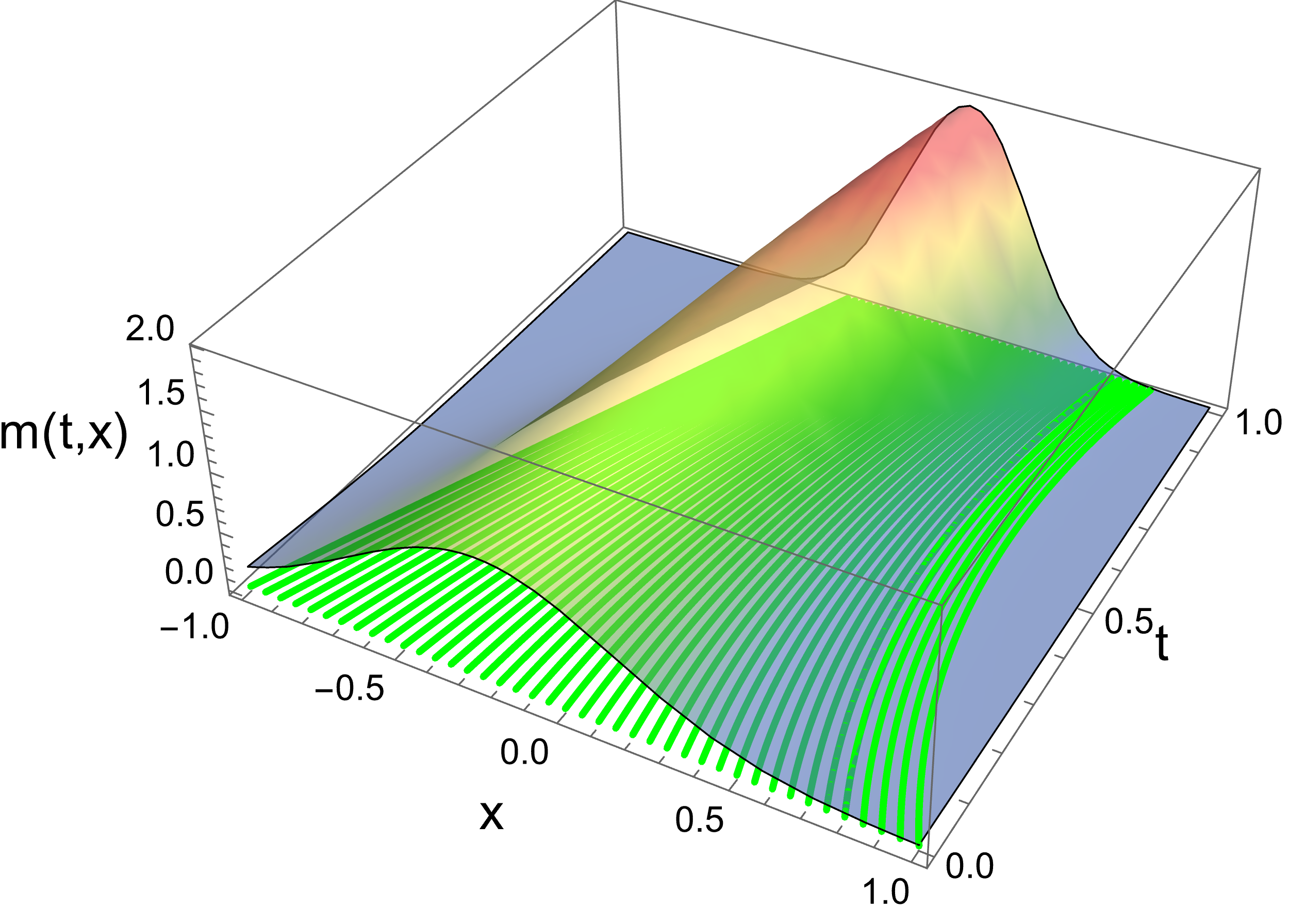}
%         \label{fig:Qbar constant analres C}
     \end{subfigure}
        \caption{Analytical solutions for $\overline{Q}=1$ with quadratic cost.}
        \label{fig:Qbar constant analres}
\end{figure}

{\bf MLP with instantaneous feedback.} 
In Figure \ref{fig:MLP Qbar constant results}, we show the price approximation obtained by $\mathrm{MLP}_\varpi$ and the error on the approximated values for $v^*$ obtained by $\mathrm{MLP}_v$. We see that the $m$-weighted error for $v^*$ increases as we march on time. It is important to recall that the benefit of anticipating the optimal control decreases as we approach the terminal time. Notice also that the terminal cost $u_T$ incentives agents to concentrate towards $\zeta$, which causes $m$ to peak as we reach $T$. The error approximation for both $\varpi$ and $v^*$ is of an order $10^{-2}$. 

%%%%%%Results%%%%%%%%%%
%%%%%%Results%%%%%%%%%%
%%%%%%Results%%%%%%%%%%
\begin{figure}[htp]
     \centering
     \begin{subfigure}[b]{0.4\textwidth}
         \centering
        \caption{Analytic price vs. approx.}
         \includegraphics[width=\textwidth]{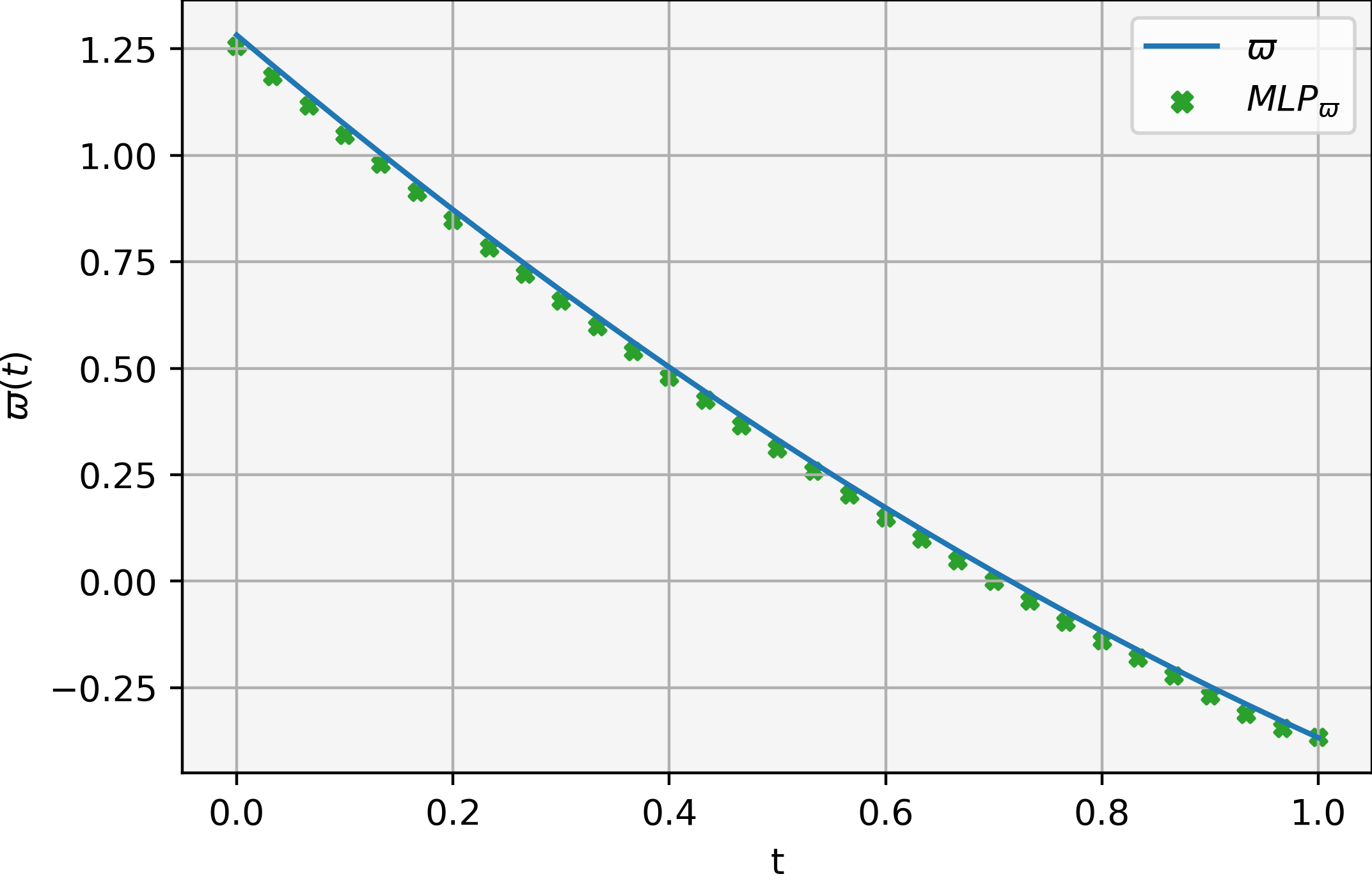}
%         \label{fig:three sin x}
     \end{subfigure}   
     \hskip0.3cm  
     \begin{subfigure}[b]{0.4\textwidth}
         \centering
        \caption{Price approx. error}
         \includegraphics[width=\textwidth]{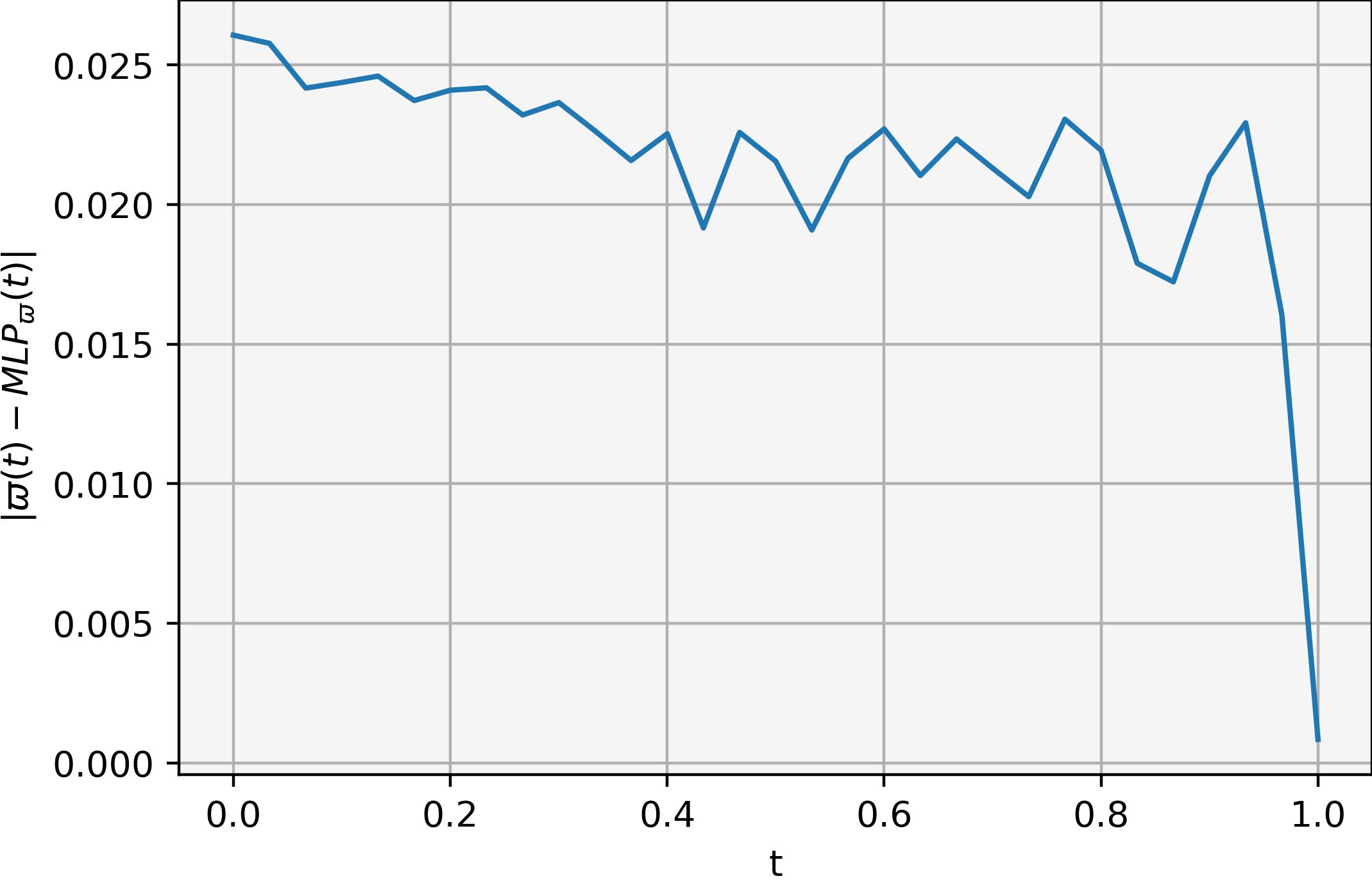}
%         \label{fig:three sin x}
     \end{subfigure}      
     
	\begin{subfigure}[b]{0.29\textwidth}
         \centering
         \caption{$v^*$ obtained by $\mathrm{MLP}_v$}
         \includegraphics[width=\textwidth]{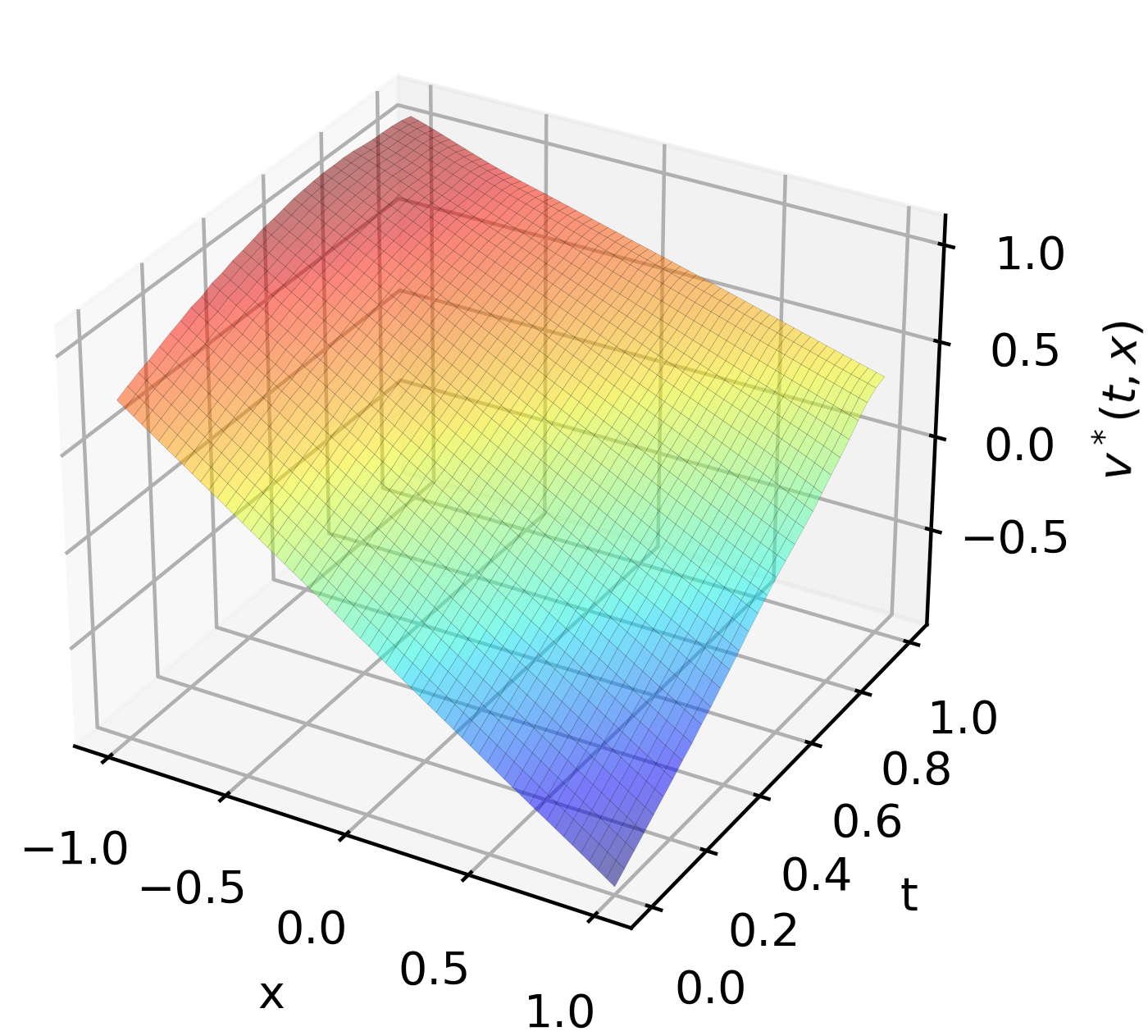}
%         \label{fig:y equals x}
     \end{subfigure}  
      \hfill  
     \begin{subfigure}[b]{0.29\textwidth}
         \centering
         \caption{Absolute error ($v^*$)}
         \includegraphics[width=\textwidth]{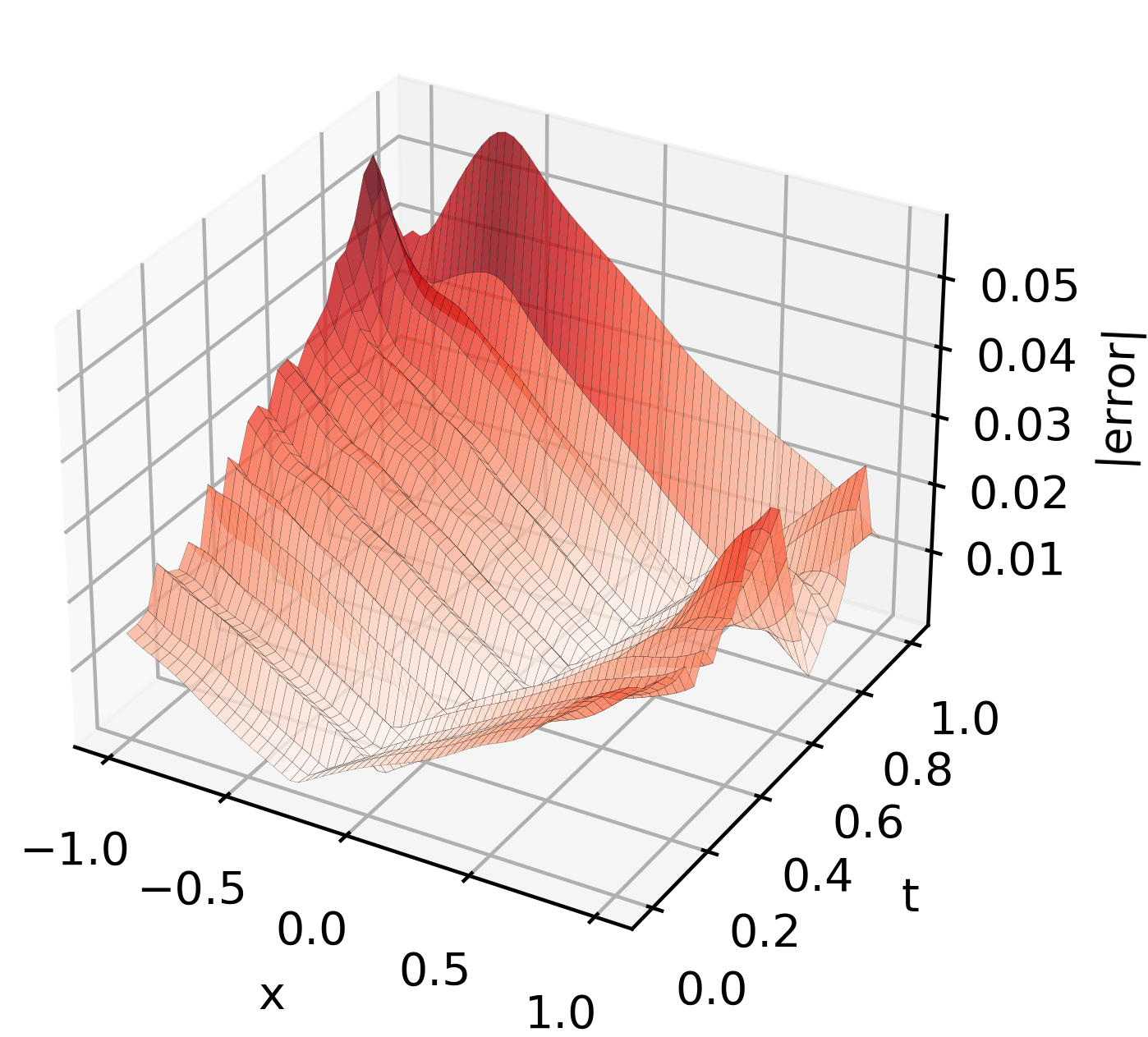}
%         \label{fig:y equals x}
     \end{subfigure}
     \hfill
     \begin{subfigure}[b]{0.29\textwidth}
         \centering
         \caption{$m$-weighted error ($v^*$)}
         \includegraphics[width=\textwidth]{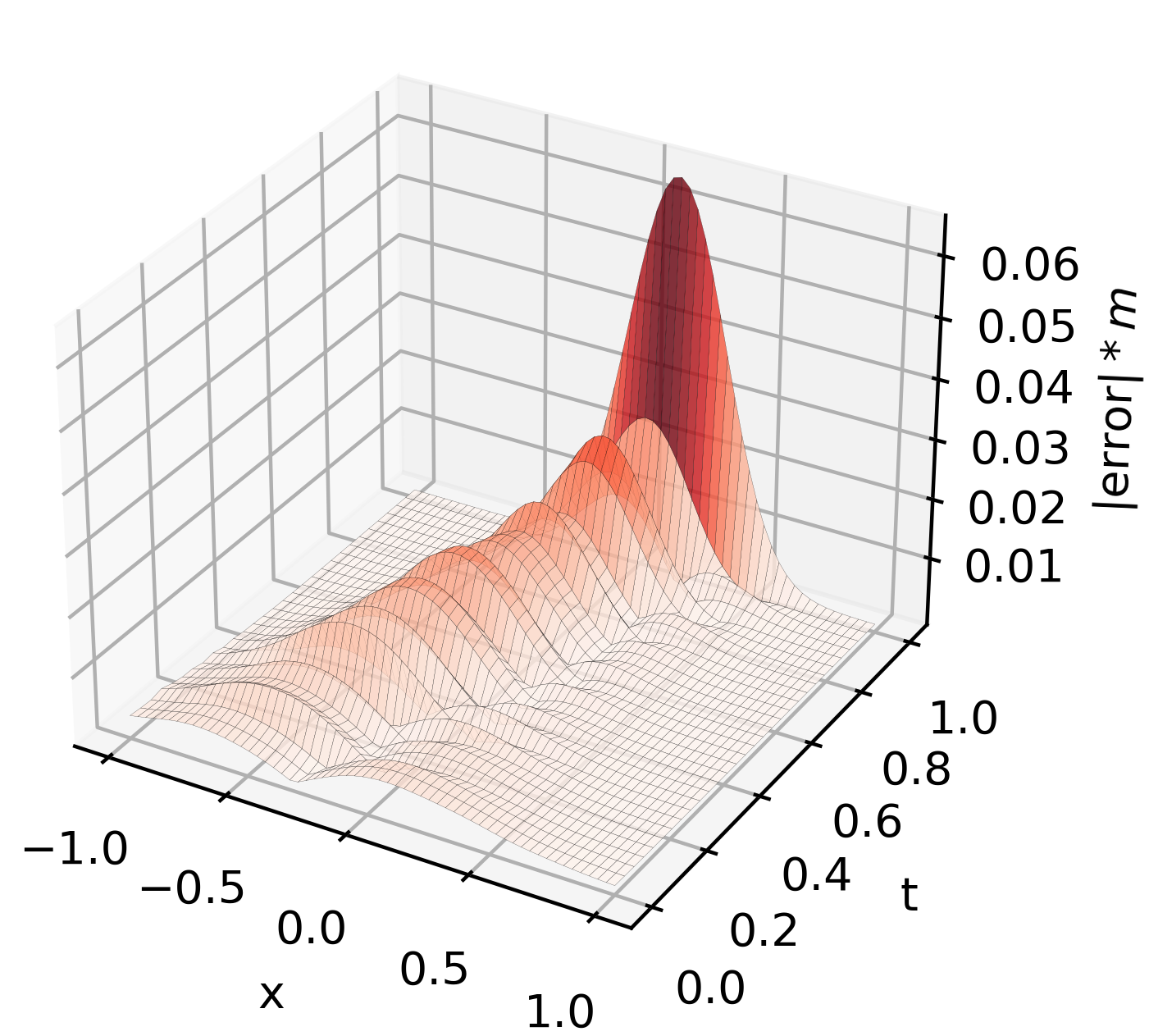}
%         \label{fig:y equals x}
     \end{subfigure}
        \caption{$\varpi$ (top) and $v^*$ (bottom) using $\mathrm{MLP}_\varpi$ and $\mathrm{MLP}_v$, respectively, for $\overline{Q}=1$ with quadratic cost.}
        \label{fig:MLP Qbar constant results}
\end{figure}

The a posteriori estimate \eqref{eq:EL discrete a posteriori} is depicted in Figure \ref{fig:MLP Qbar constant training} for the sample of initial positions used during training. The error at terminal positions, $\bepsilon_T$, stabilizes after 150 epochs, while the running error, $\bepsilon$, seems to stabilize after 350 epochs. The error in the balance condition, $\epsilon_q$, has a dominating oscillatory behavior. Aggregating all errors, we observe a persistent decay in the first 100 epochs, reaching an order $10^{-1}$ in the a posteriori estimate. The estimate oscillates in the remaining epochs (100 to 400), ending with a magnitude of an order $10^{-1}$. The price error oscillates with a consistent decay, showing the approximation improvement.
%%%%%%TRAINING%%%%%%%%%%
%%%%%%TRAINING%%%%%%%%%%
%%%%%%TRAINING%%%%%%%%%%

\begin{figure}[htp]
     \centering
     \begin{subfigure}[b]{0.48\textwidth}
     \centering
        \caption{Residuals (training)}
         \includegraphics[width=\textwidth]{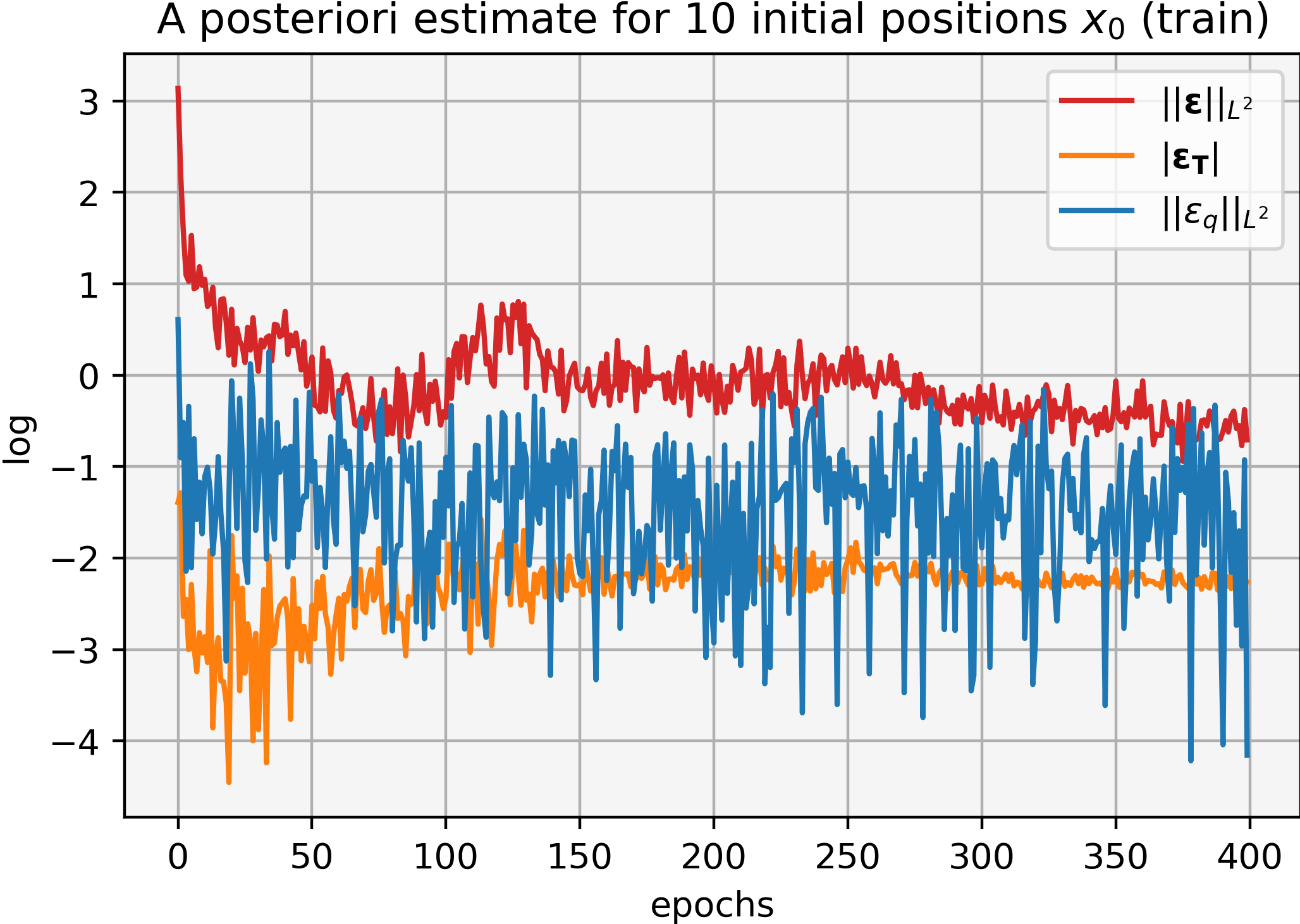}
          \end{subfigure}
     \hfill
     \begin{subfigure}[b]{0.48\textwidth}
     \centering
        \caption{Estimate vs. price error (training)}
         \includegraphics[width=\textwidth]{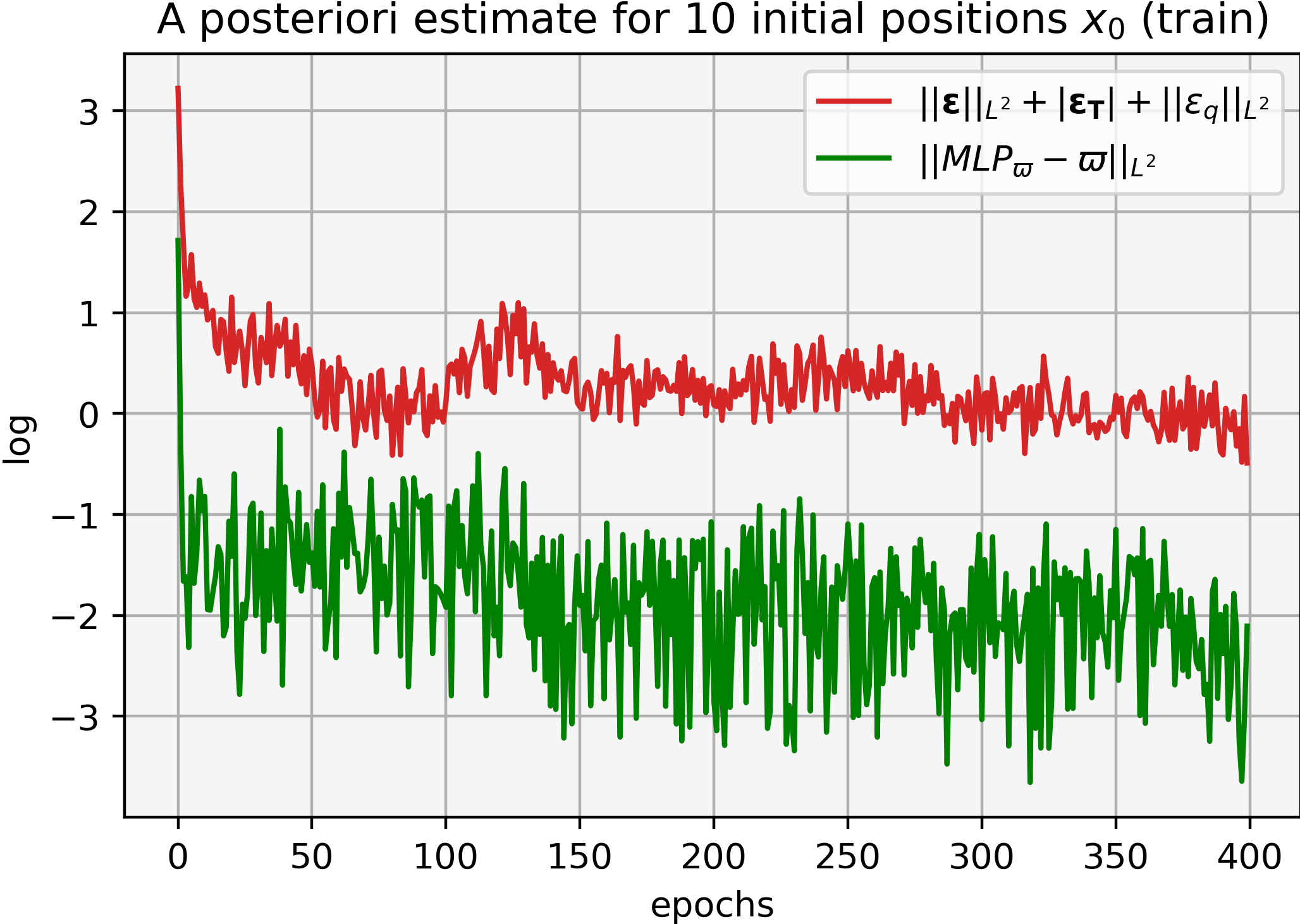}
          \end{subfigure}
%     \hfill          
%          \centering
%     \begin{subfigure}[b]{0.48\textwidth}
%     \centering
%        \caption{Residuals (test set)}
%         \includegraphics[width=\textwidth]{Plots_New/DetPrice_2MLP_V2/Test_EL_total_separate_log.png}
%          \end{subfigure}
%     \hfill
%     \begin{subfigure}[b]{0.48\textwidth}
%     \centering
%        \caption{Estimate vs. price error (test set)}
%         \includegraphics[width=\textwidth]{Plots_New/DetPrice_2MLP_V2/Test_EL_total_log.png}
%          \end{subfigure}
         \caption{Residuals (left), a posteriori estimate \eqref{eq:EL discrete a posteriori} and price error (right) during training (log. scale)  using $\mathrm{MLP}_\varpi$ and $\mathrm{MLP}_v$ for $\overline{Q}=1$ with quadratic cost.}
%         \label{fig:three sin x}
        \label{fig:MLP Qbar constant training}
\end{figure}

{\bf RNN with supply history.} 
Figure \ref{fig:RNNMLP Qbar constant results} shows the price approximation and error values for $v^*$ using $\mathrm{RNN}_\varpi$ and $\mathrm{RNN}_v$. As before, the $m$-weighted error for $v^*$ increases as we march on time, due to the reasoning presented previously. The price and optimal control approximation error are of an order $10^{-2}$.

%%%%%%Results%%%%%%%%%%
%%%%%%Results%%%%%%%%%%
%%%%%%Results%%%%%%%%%%
\begin{figure}[htp]
     \centering
     \begin{subfigure}[b]{0.4\textwidth}
         \centering
        \caption{Analytic price vs. approx.}
         \includegraphics[width=\textwidth]{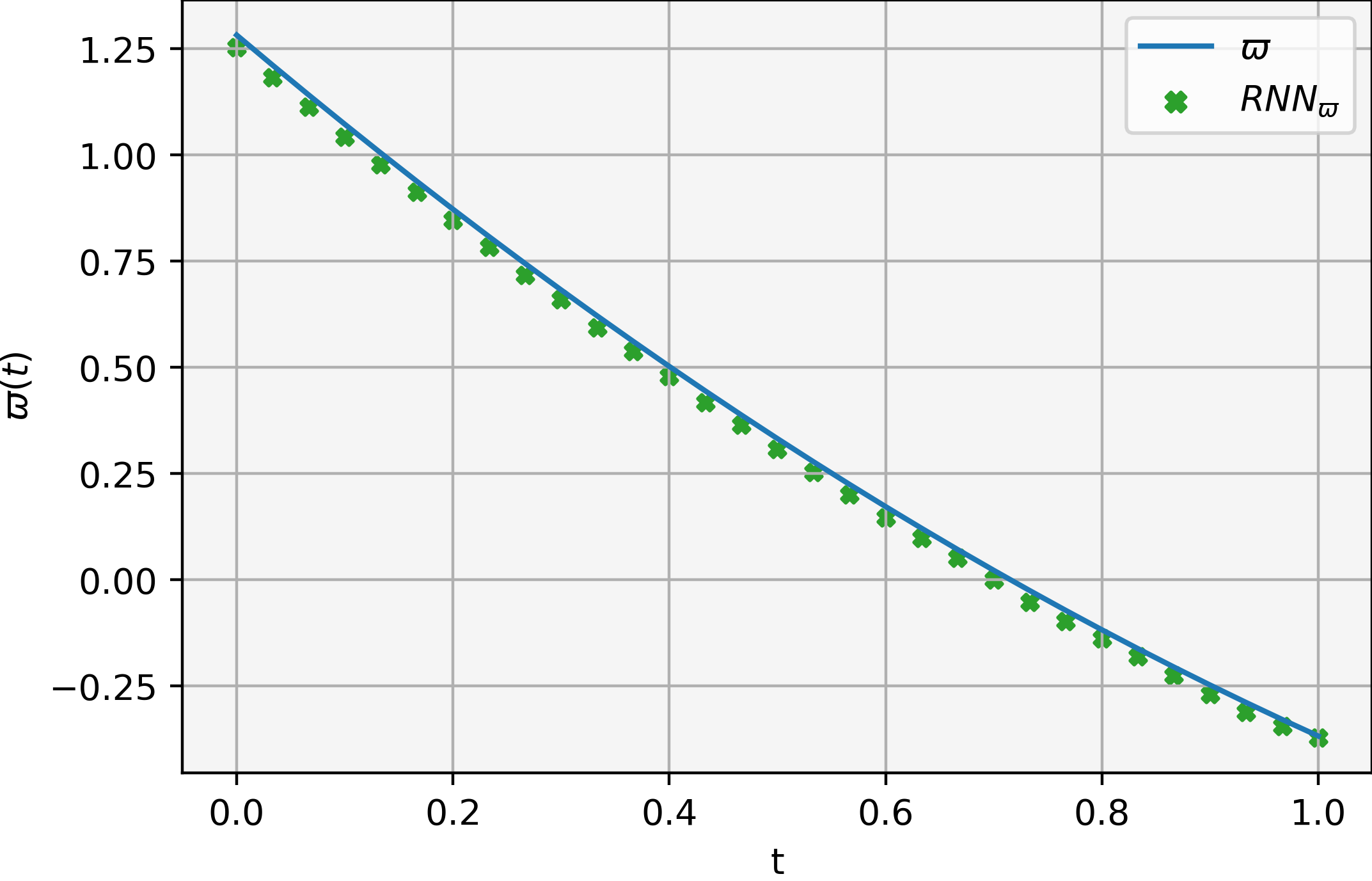}
%         \label{fig:three sin x}
     \end{subfigure}   
     \hskip0.3cm
     \begin{subfigure}[b]{0.4\textwidth}
         \centering
        \caption{Price approx. error}
         \includegraphics[width=\textwidth]{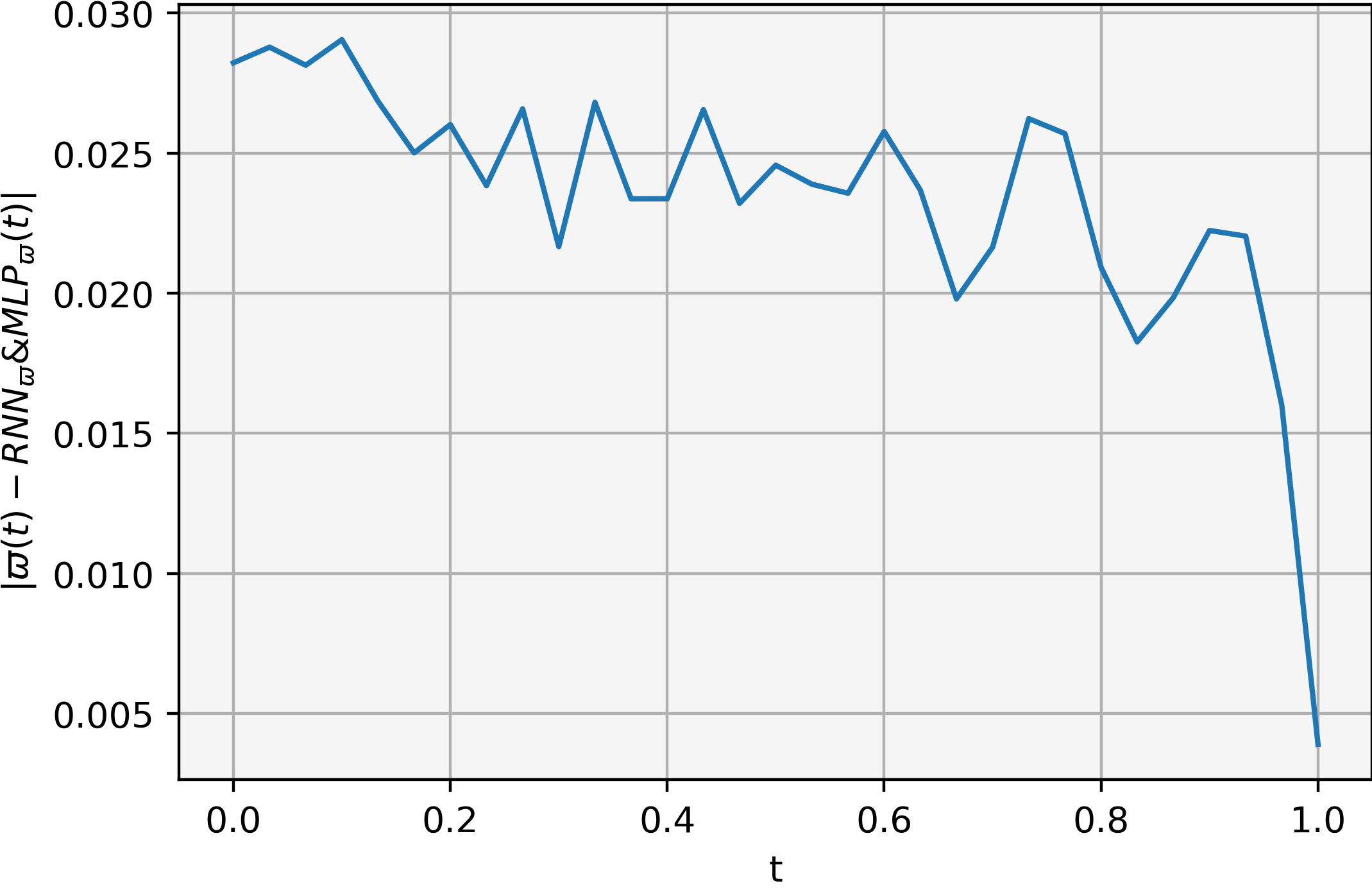}
%         \label{fig:three sin x}
     \end{subfigure}   
     
     \begin{subfigure}[b]{0.29\textwidth}
         \centering
         \caption{$v^*$ obtained by $\mathrm{RNN}_v$}
         \includegraphics[width=\textwidth]{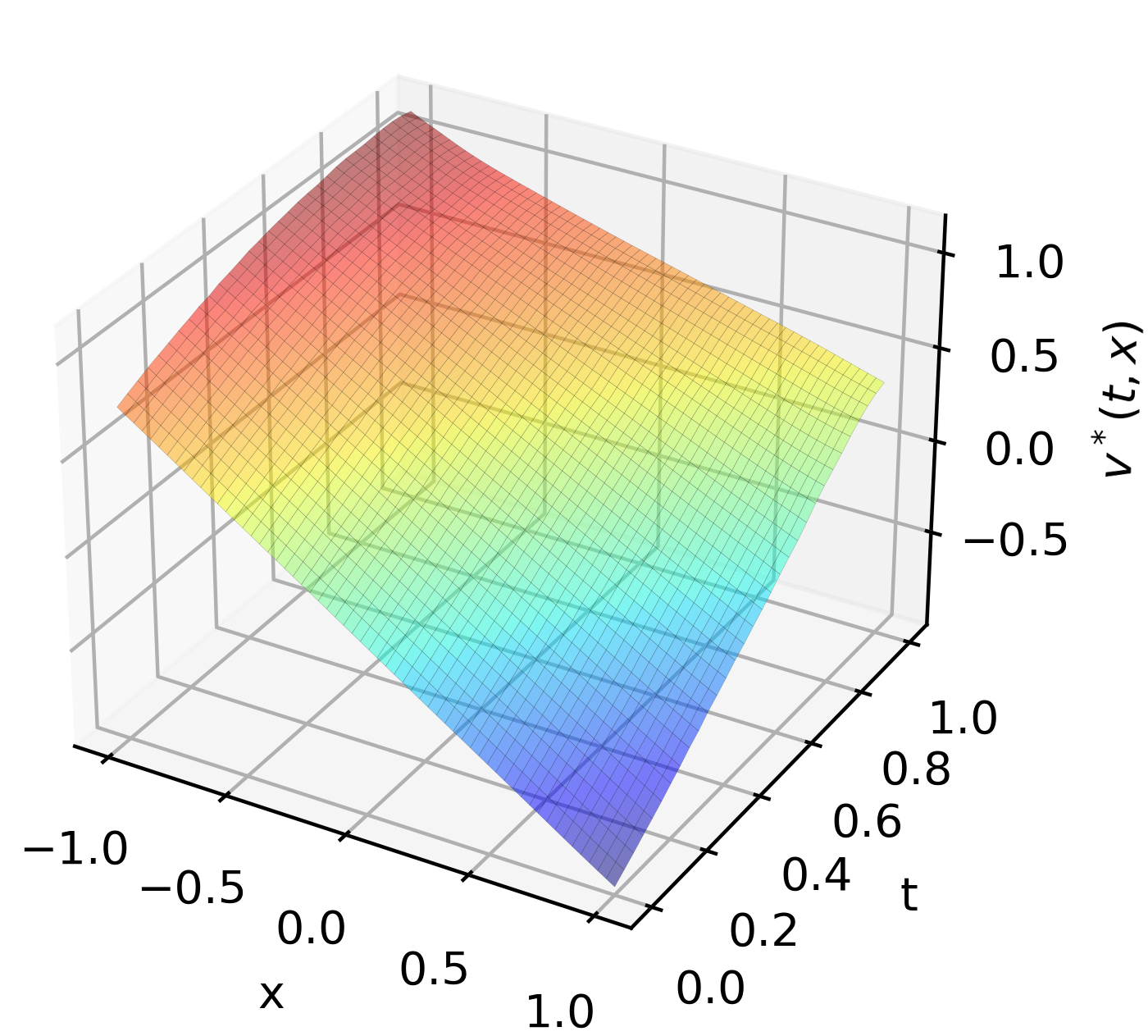}
%         \label{fig:y equals x}
     \end{subfigure}
     \hfill     
     \begin{subfigure}[b]{0.29\textwidth}
         \centering
         \caption{Absolute error ($v^*$)}
         \includegraphics[width=\textwidth]{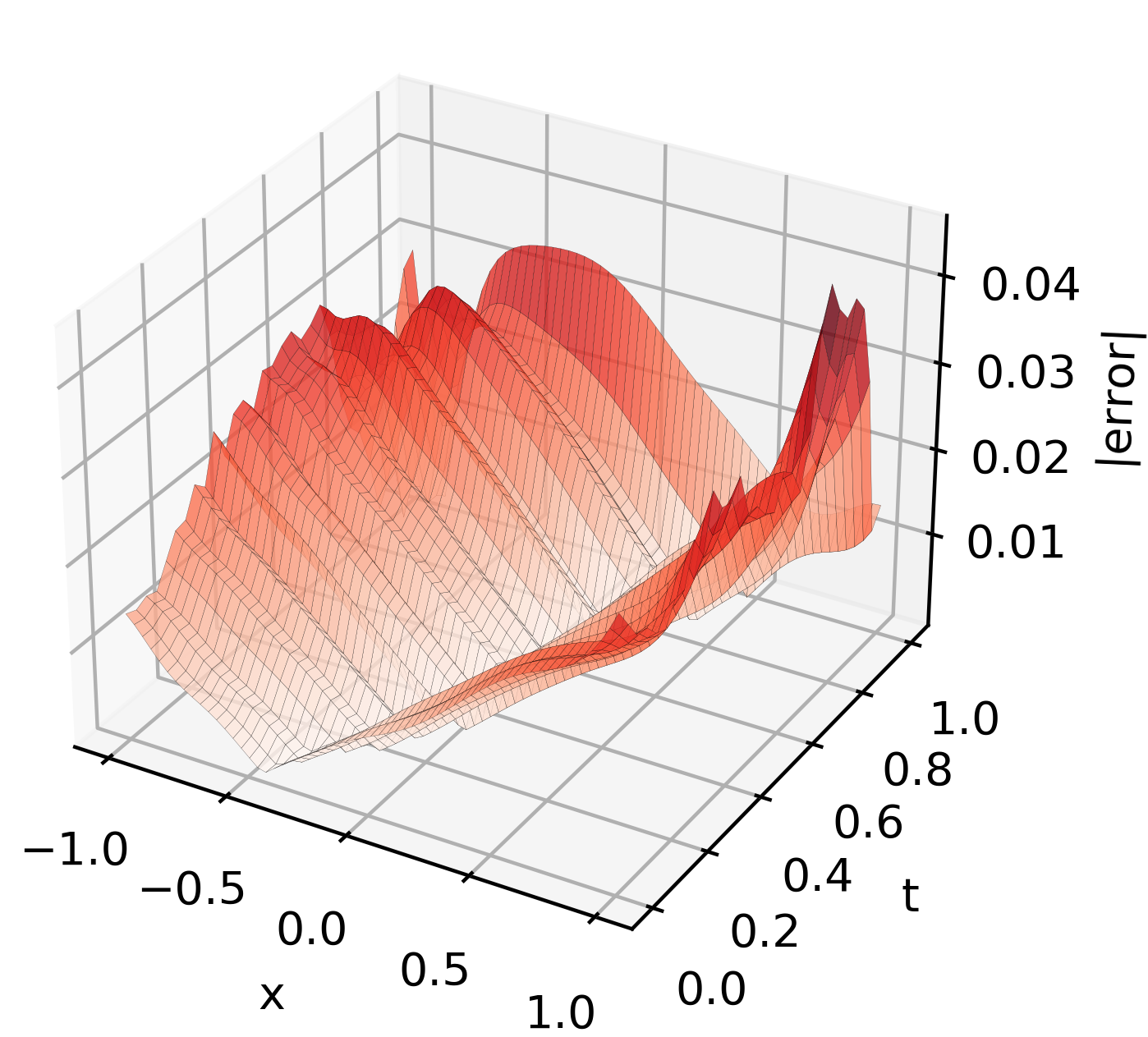}
%         \label{fig:y equals x}
     \end{subfigure}
     \hfill
     \begin{subfigure}[b]{0.29\textwidth}
         \centering
         \caption{$m$-weighted error ($v^*$)}
         \includegraphics[width=\textwidth]{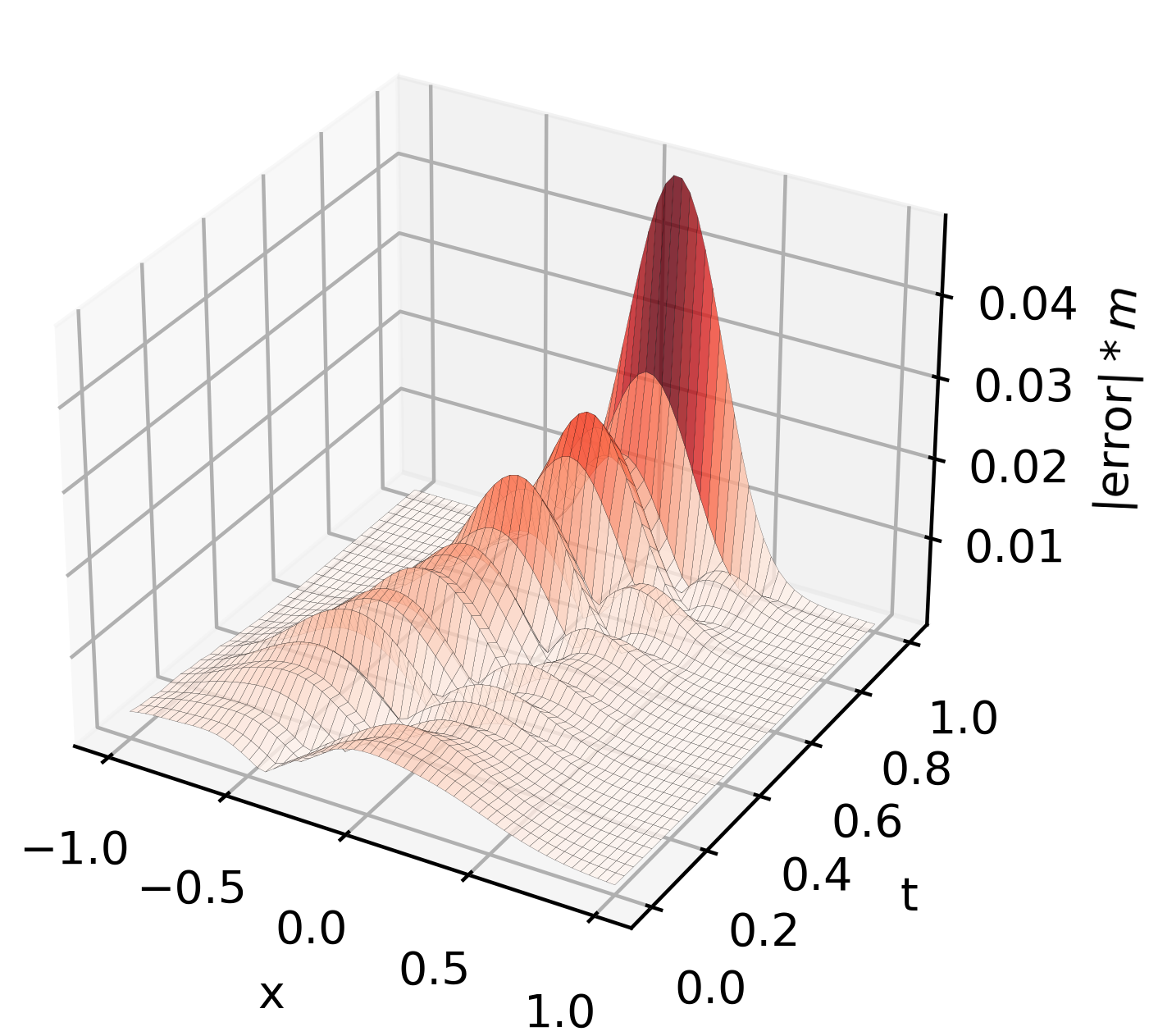}
%         \label{fig:y equals x}
     \end{subfigure}
        \caption{$\varpi$ (top) and $v^*$ (bottom) using $\mathrm{RNN}_\varpi$ and $\mathrm{RNN}_v$, respectively, for $\overline{Q}=1$ with quadratic cost.}
        \label{fig:RNNMLP Qbar constant results}
\end{figure}

As Figure \ref{fig:RNNMLP Qbar constant training} shows, the a posteriori estimate \eqref{eq:EL discrete a posteriori} stabilizes around the order $10^{-1}$ more consistently than the previous architecture. However, no significant difference is observed in the price approximation. The error at terminal positions, $\bepsilon_T$, stabilizes after 100 epochs, while the running error, $\bepsilon$, has a consistent decay starting from 150 epochs, with a sudden peak around 250 epochs. As in the previous architecture, the error in the balance condition, $\epsilon_q$, has a dominating oscillatory behavior.
%%%%%%TRAINING%%%%%%%%%%
%%%%%%TRAINING%%%%%%%%%%
%%%%%%TRAINING%%%%%%%%%%

\begin{figure}[htp]
     \centering
     \begin{subfigure}[b]{0.48\textwidth}
     \centering
        \caption{Residuals (training)}
         \includegraphics[width=\textwidth]{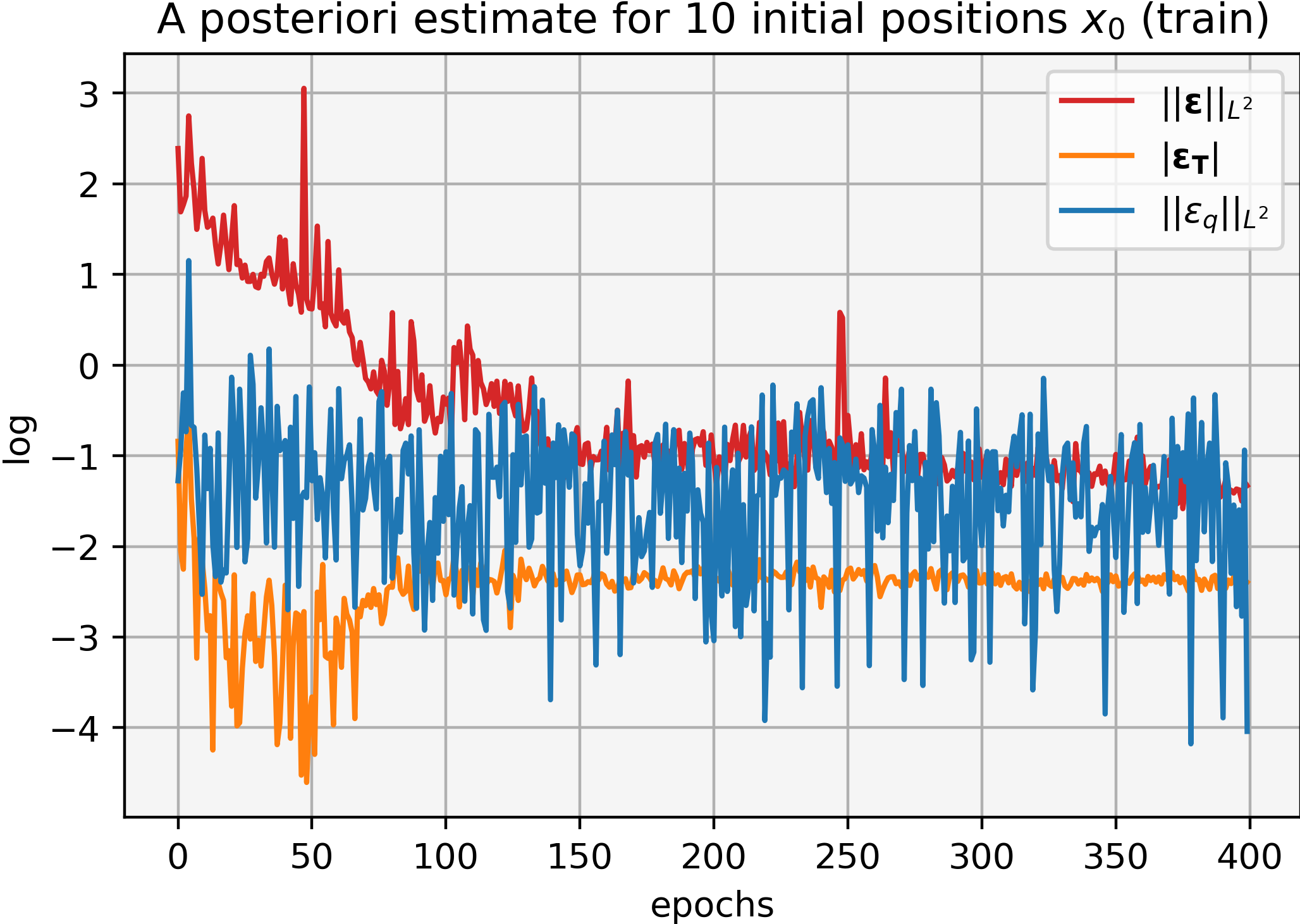}
          \end{subfigure}
     \hfill
     \begin{subfigure}[b]{0.48\textwidth}
     \centering
        \caption{Estimate vs. price error (training)}
         \includegraphics[width=\textwidth]{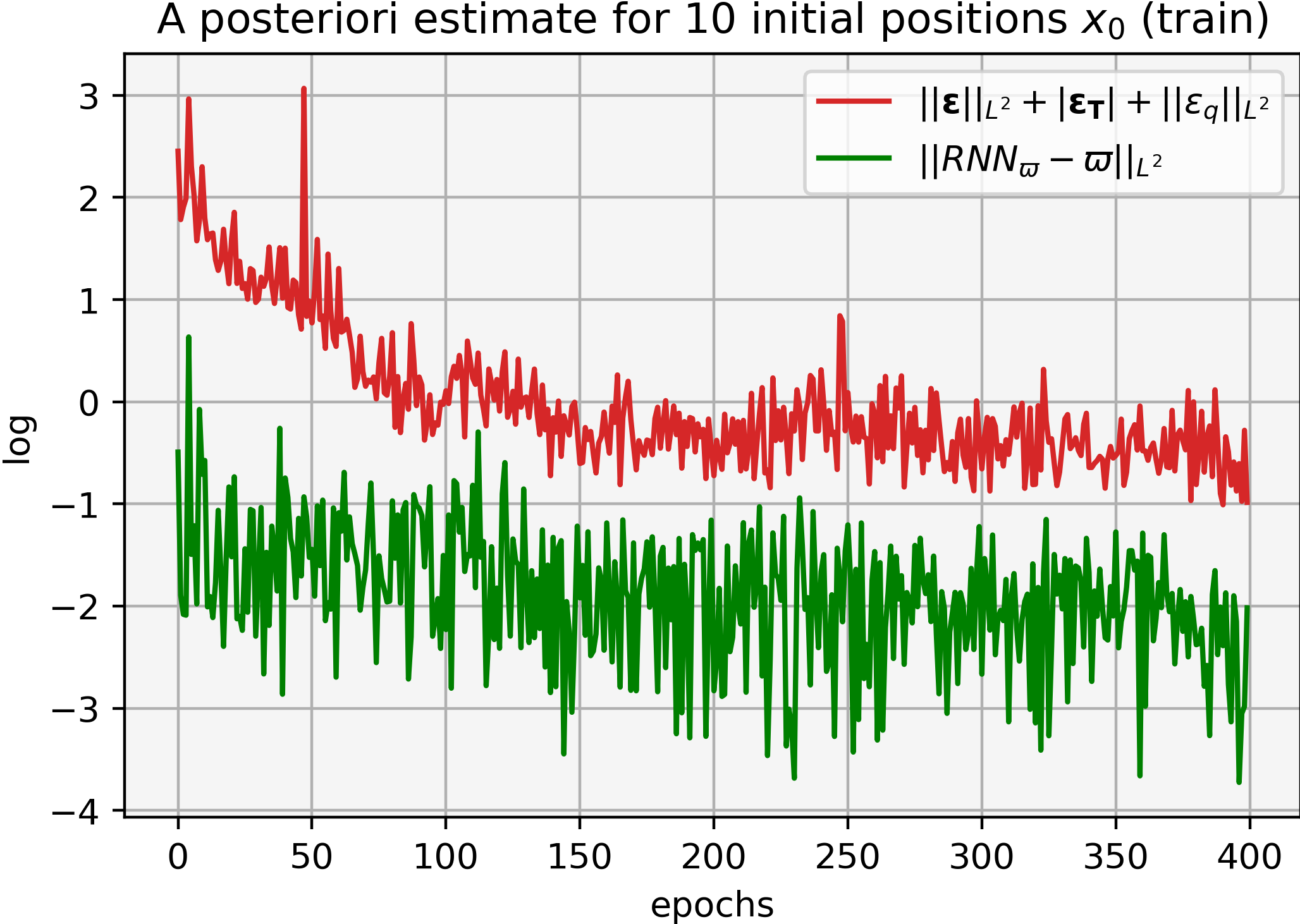}
          \end{subfigure}
%     \hfill          
%          \centering
%     \begin{subfigure}[b]{0.48\textwidth}
%     \centering
%        \caption{Residuals (test set)}
%         \includegraphics[width=\textwidth]{Plots_New/DetPrice_2RNN_V2/Test_EL_total_separate_log.png}
%          \end{subfigure}
%     \hfill
%     \begin{subfigure}[b]{0.48\textwidth}
%     \centering
%        \caption{Estimate vs. price error (test set)}
%         \includegraphics[width=\textwidth]{Plots_New/DetPrice_2RNN_V2/Test_EL_total_log.png}
%          \end{subfigure}
         \caption{Residuals (left), a posteriori estimate \eqref{eq:EL discrete a posteriori} and price error (right) during training (log. scale) using $\mathrm{RNN}_\varpi$ and $\mathrm{RNN}_v$ for $\overline{Q}=1$ with quadratic cost.}
%         \label{fig:three sin x}
        \label{fig:RNNMLP Qbar constant training}
\end{figure}

{\bf Comparison.} Regarding the a posteriori estimate, we observe that the $RNN$ architecture achieves a consistent and earlier decay (150 epochs) compared to the $MLP$. However, no significant difference is observed in the price approximation.

\subsubsection{\bf Oscillating mean-reverting function}
%\label{sec:Sub Oscillating}
The second case we consider is 
\[
	\overline{Q}(t)=7 e^{-t} \sin(3 \pi t), \quad q_0=0.
\]
Figure \ref{fig:Qbar osc analres} illustrates the analytical solutions for $\varpi$, $v^*$, and $m$. As before, we include the characteristic curves with initial position $x_0 \in [-1,1]$ in the plot for $m$. Notice the oscillating behavior inherited from $Q$. Proceeding as before,  we evaluate the error in the approximation of $v^*$  over the domain $[0,T]\times [-1,1]$ using the absolute error and the $m$-weighted error.
 
%%%%%%Analytical sol%%%%%%%%%%
%%%%%%Analytical sol%%%%%%%%%%
%%%%%%Analytical sol%%%%%%%%%%
\begin{figure}[htp]
     \centering
     \begin{subfigure}[t]{0.32\textwidth}
         \centering
         \caption{Supply and price}
         \includegraphics[width=\textwidth]{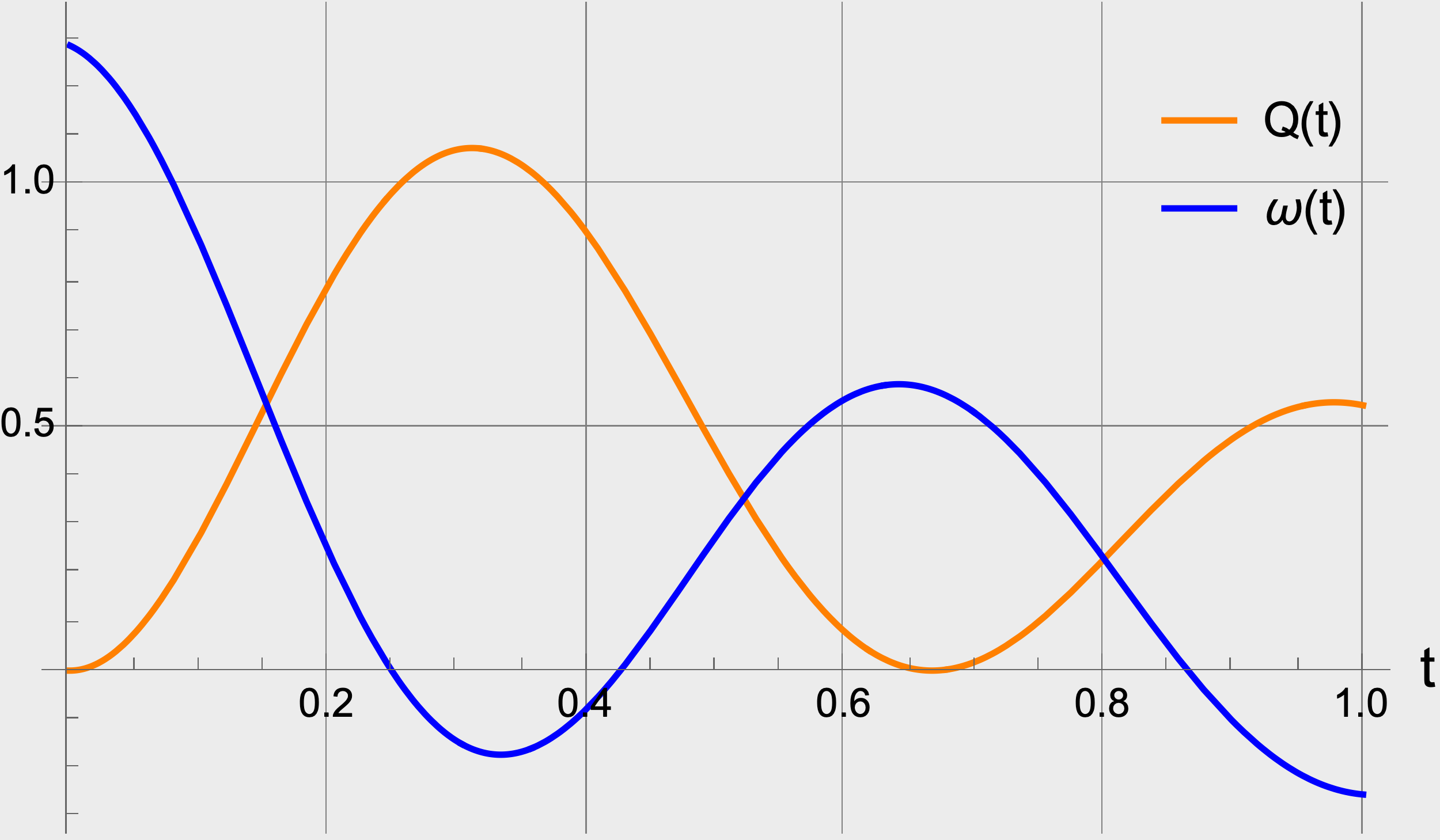}
%         \label{fig:Qbar constant analres A}
     \end{subfigure}
     \hfill
     \begin{subfigure}[t]{0.32\textwidth}
         \centering
        \caption{Optimal feedback $v^*$}
         \includegraphics[width=\textwidth]{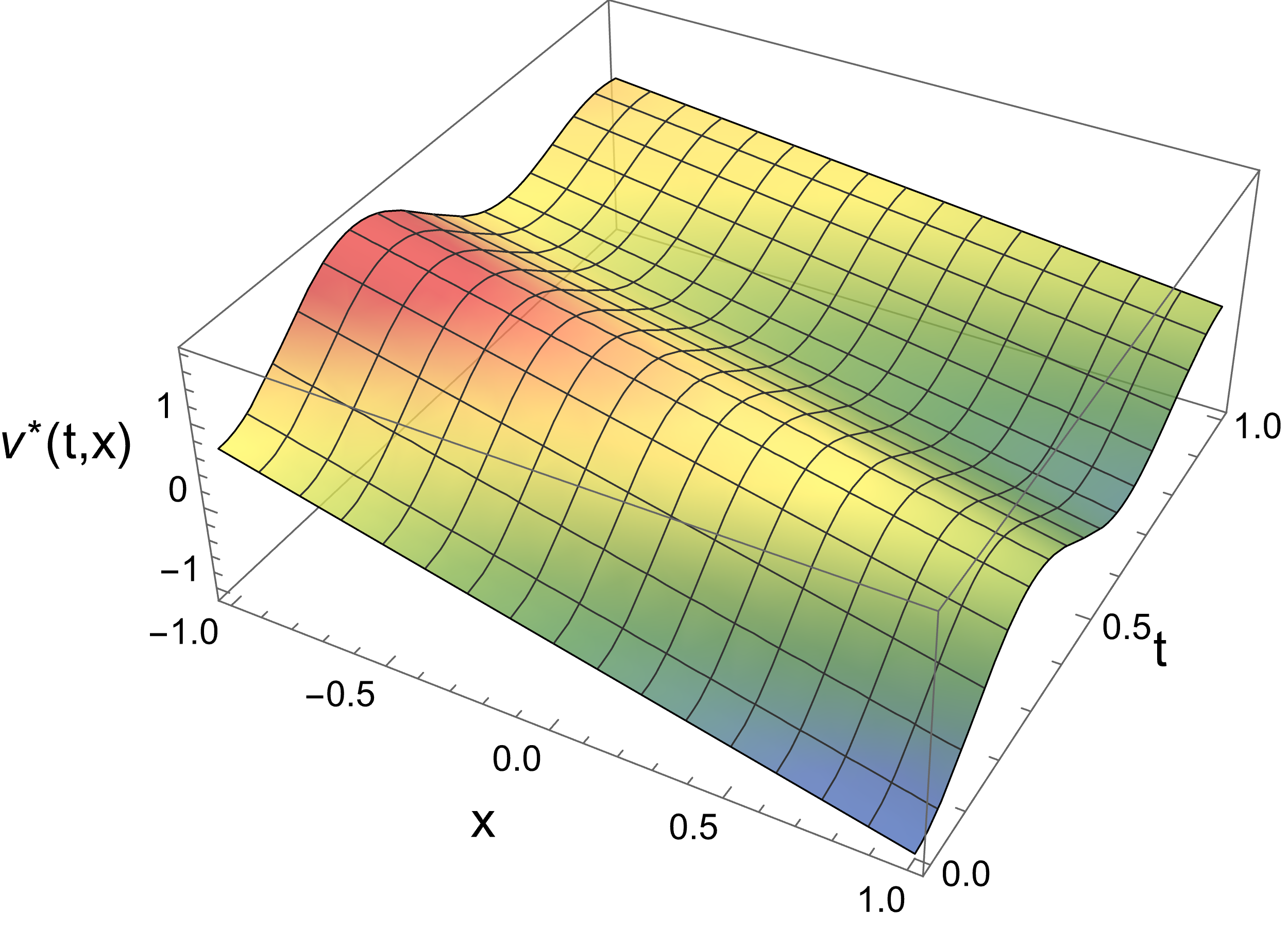}
%         \label{fig:Qbar constant analres B}
     \end{subfigure}
     \hfill
     \begin{subfigure}[t]{0.32\textwidth}
         \centering
        \caption{$m$ and characteristics}
         \includegraphics[width=\textwidth]{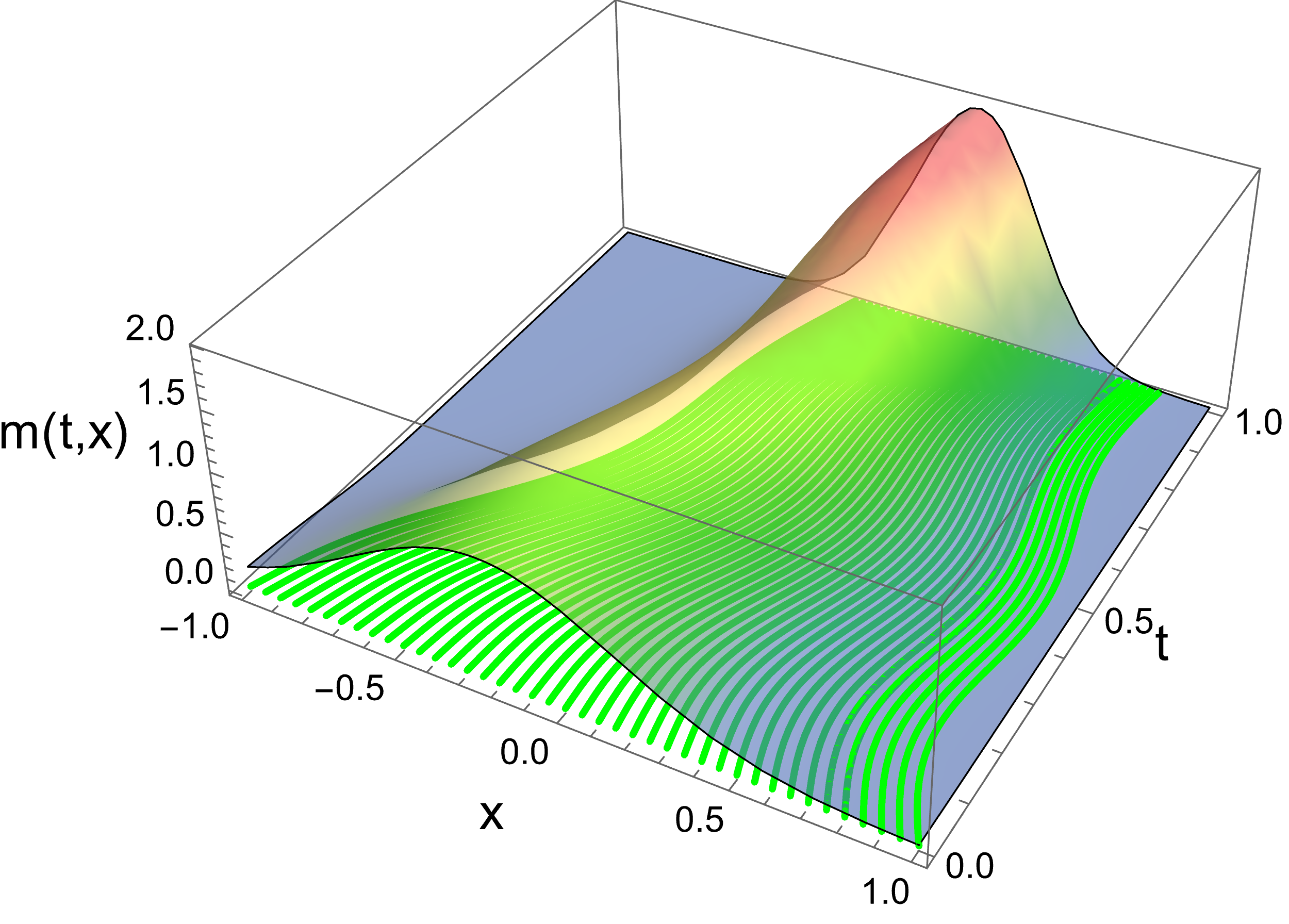}
%         \label{fig:Qbar constant analres C}
     \end{subfigure}
        \caption{Analytical solutions for $\overline{Q}=7 e^{-t} \sin(3 \pi t)$ with quadratic cost.}
        \label{fig:Qbar osc analres}
\end{figure}

{\bf MLP with instantaneous feedback.} Figure \ref{fig:MLP Qbar osc results} shows the price approximation and error on the approximated values for $v^*$. Despite the oscillatory behavior of $Q$, the price and optimal control approximations reach an error of the same order ($10^{-2}$) as the constant-mean supply case.

%%%%%%Results%%%%%%%%%%
%%%%%%Results%%%%%%%%%%
%%%%%%Results%%%%%%%%%%
\begin{figure}[htp]
     \centering
     \begin{subfigure}[b]{0.4\textwidth}
         \centering
        \caption{Analytic price vs. approx.}
         \includegraphics[width=\textwidth]{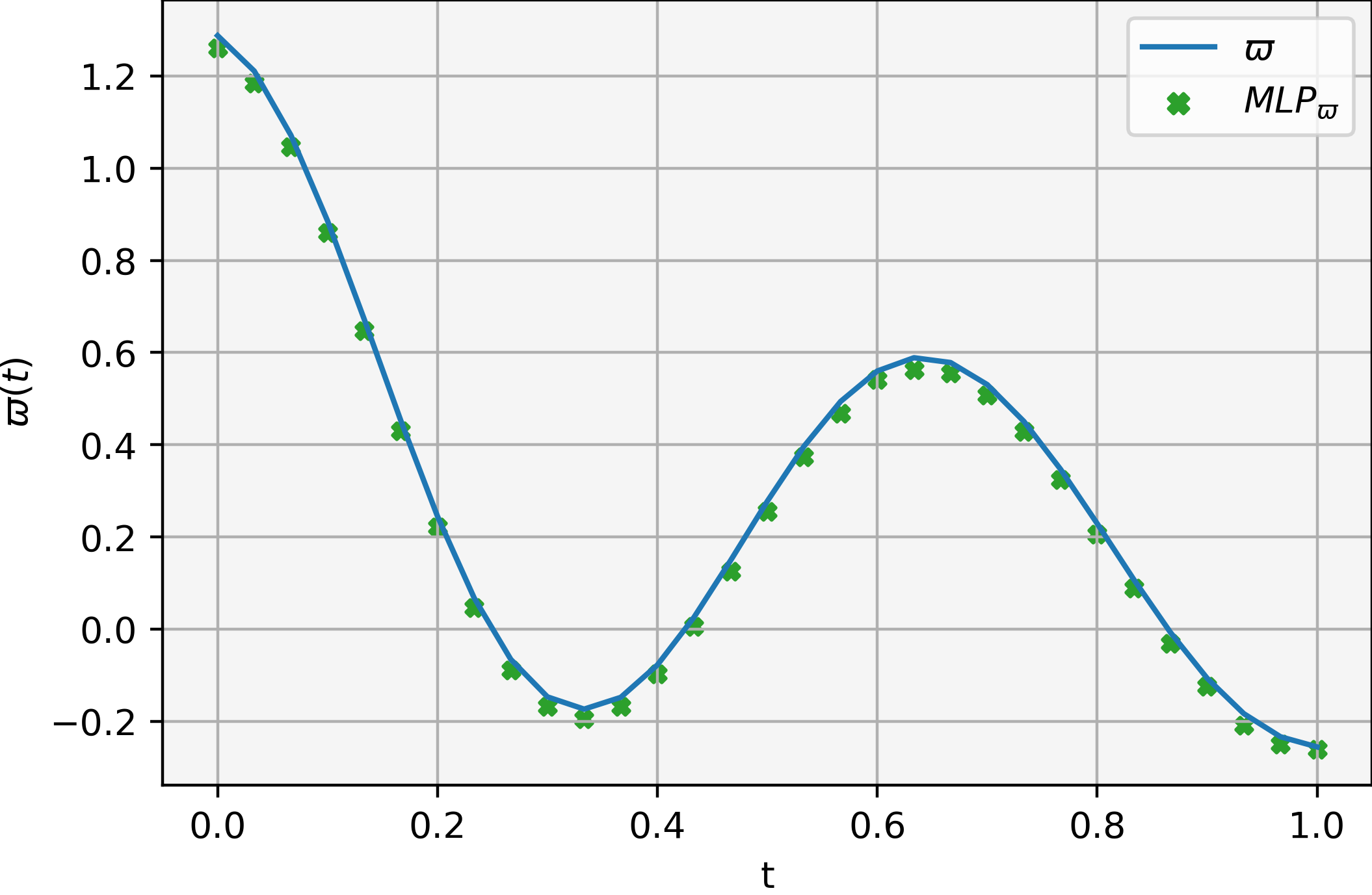}
%         \label{fig:three sin x}
     \end{subfigure}   
     \hskip0.3cm
     \begin{subfigure}[b]{0.4\textwidth}
         \centering
        \caption{Price approx. error}
         \includegraphics[width=\textwidth]{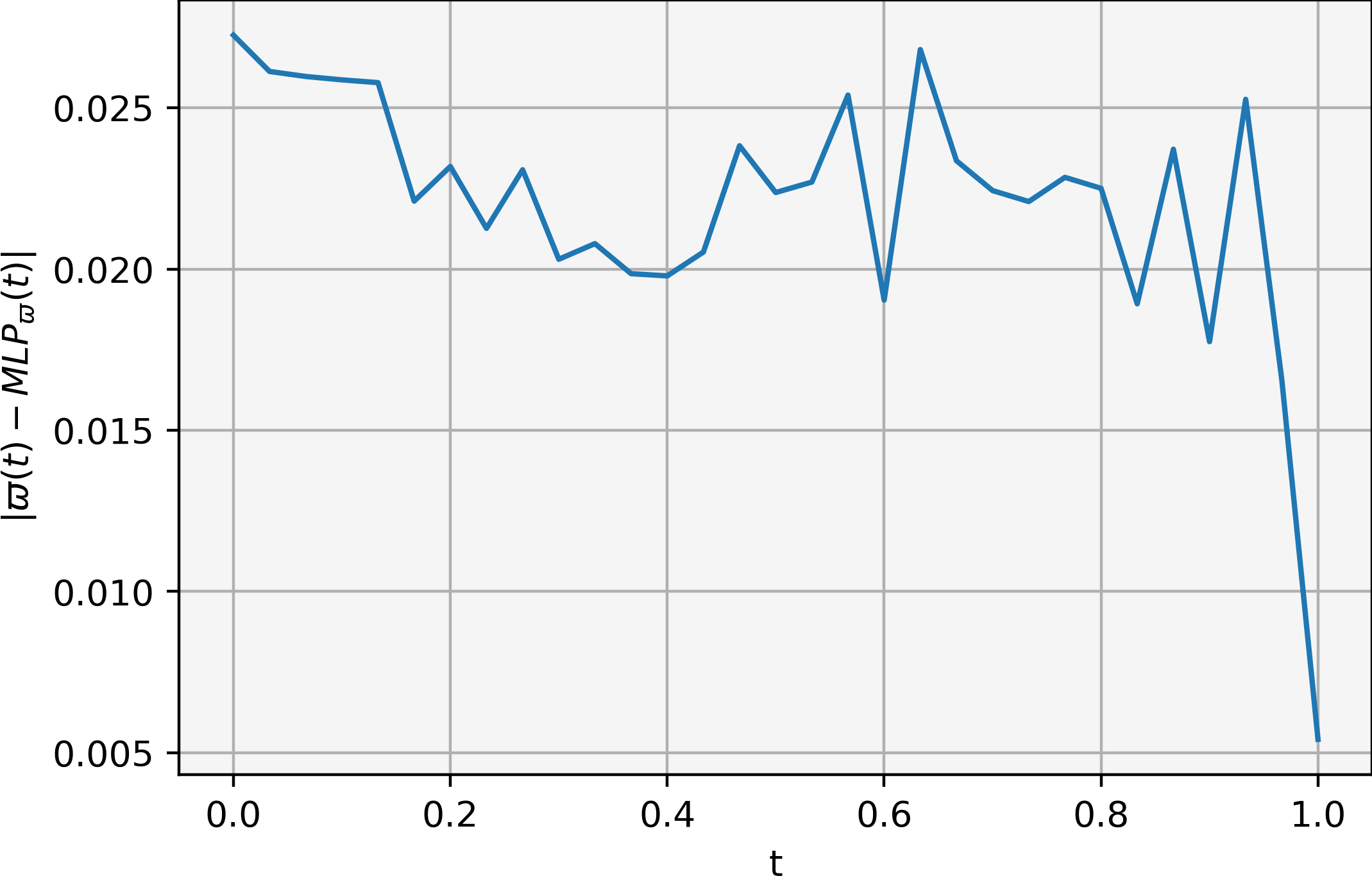}
%         \label{fig:three sin x}
     \end{subfigure}        
     
     \begin{subfigure}[b]{0.29\textwidth}
         \centering
         \caption{$v^*$ obtained by $\mathrm{MLP}_v$}
         \includegraphics[width=\textwidth]{Plots_New/DetPrice_2MLP_V2/OCNN.png}
%         \label{fig:y equals x}
     \end{subfigure}
     \hfill
     \begin{subfigure}[b]{0.29\textwidth}
         \centering
         \caption{Absolute error ($v^*$)}
         \includegraphics[width=\textwidth]{Plots_New/DetPrice_2MLP_V2/OCerror.png}
%         \label{fig:y equals x}
     \end{subfigure}
     \hfill
     \begin{subfigure}[b]{0.29\textwidth}
         \centering
         \caption{$m$-weighted error ($v^*$)}
         \includegraphics[width=\textwidth]{Plots_New/DetPrice_2MLP_V2/OCerrorWm.png}
%         \label{fig:y equals x}
     \end{subfigure}
        \caption{$\varpi$ (top) and $v^*$ (bottom) using $\mathrm{MLP}_\varpi$ and $\mathrm{MLP}_v$, respectively, for $\overline{Q}(t)=7 e^{-t} \sin(3 \pi t)$ with quadratic cost.}
        \label{fig:MLP Qbar osc results}
\end{figure}

Figure \ref{fig:MLP Qbar osc training} shows the a posteriori estimate \eqref{eq:EL discrete a posteriori}. Regardless of oscillatory behavior, we obtain a persistent decay as we iterate, stabilizing around an order of $10^{-1}$. The error in the price approximation decays while oscillating around an order of $10^{-2}$. The error $\bepsilon$ decays consistently. The terminal error stabilizes after 150 epochs around the order $10^{-2}$. The error in the balance condition oscillates the most.
%%%%%%TRAINING%%%%%%%%%%
%%%%%%TRAINING%%%%%%%%%%
%%%%%%TRAINING%%%%%%%%%%

\begin{figure}[htp]
     \centering
     \begin{subfigure}[b]{0.48\textwidth}
     \centering
        \caption{Residuals (training)}
         \includegraphics[width=\textwidth]{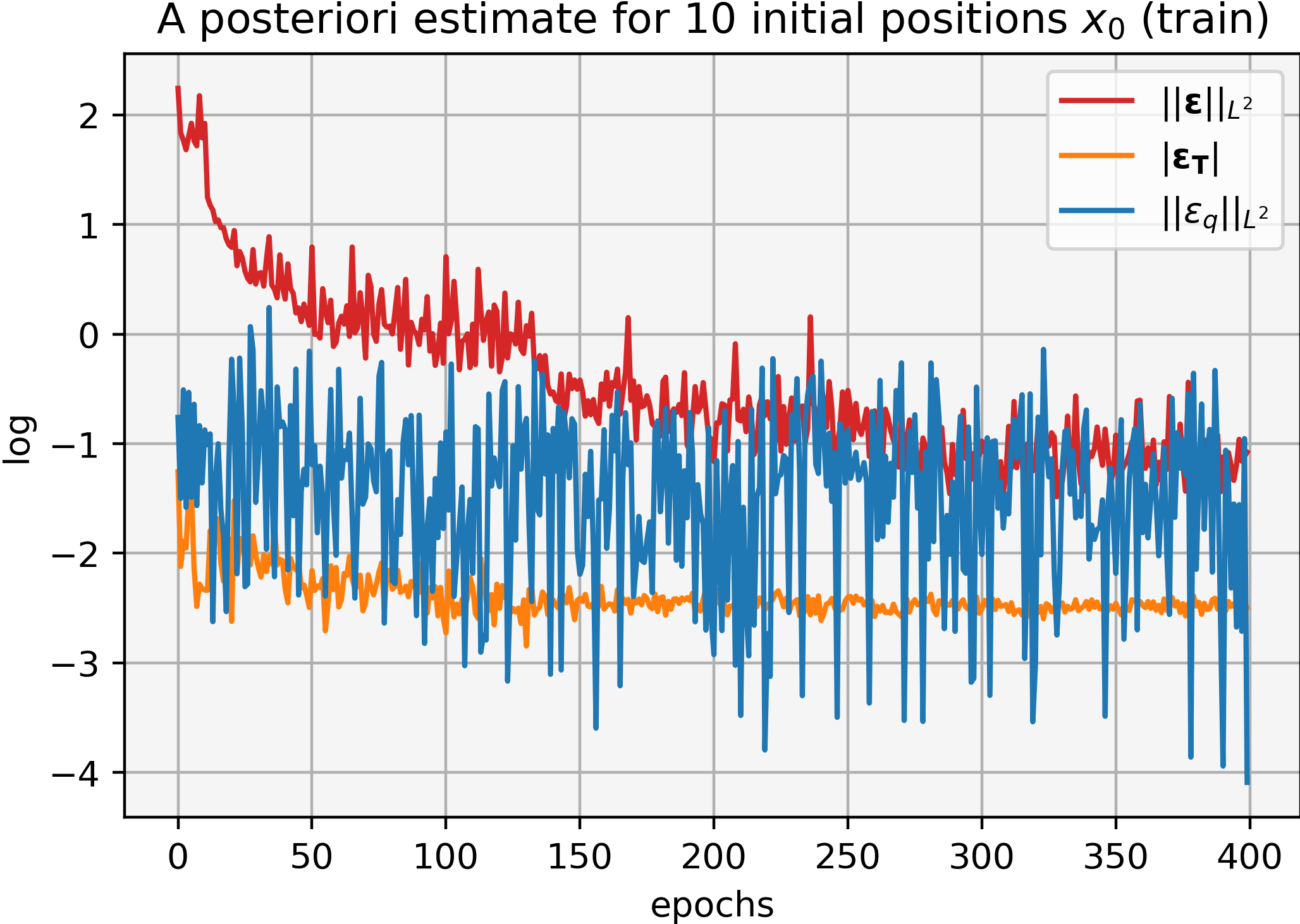}
          \end{subfigure}
     \hfill
     \begin{subfigure}[b]{0.48\textwidth}
     \centering
        \caption{Estimate vs. price error (training)}
         \includegraphics[width=\textwidth]{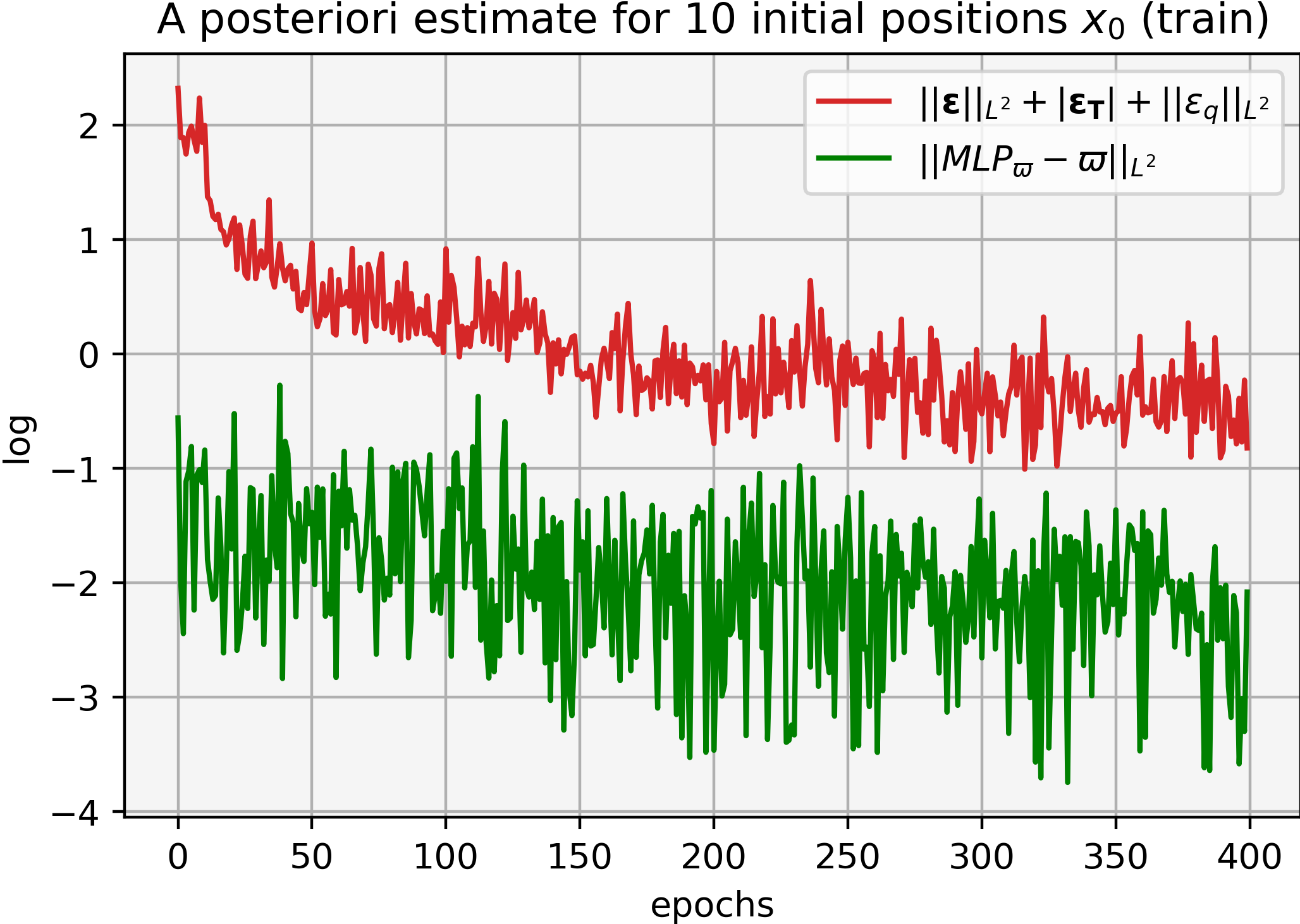}
          \end{subfigure}
%     \hfill          
%          \centering
%     \begin{subfigure}[b]{0.48\textwidth}
%     \centering
%        \caption{Residuals (test set)}
%         \includegraphics[width=\textwidth]{Plots_New/DetPrice_2MLP_V3/Test_EL_total_separate_log.png}
%          \end{subfigure}
%     \hfill
%     \begin{subfigure}[b]{0.48\textwidth}
%     \centering
%        \caption{Estimate vs. price error (test set)}
%         \includegraphics[width=\textwidth]{Plots_New/DetPrice_2MLP_V3/Test_EL_total_log.png}
%          \end{subfigure}
         \caption{Residuals (left), a posteriori estimate \eqref{eq:EL discrete a posteriori} and price error (right) during training (log. scale) using $\mathrm{MLP}_\varpi$ and $\mathrm{MLP}_v$ for $\overline{Q}(t)=7 e^{-t} \sin(3 \pi t)$ with quadratic cost.}
%         \label{fig:three sin x}
        \label{fig:MLP Qbar osc training}
\end{figure}

{\bf RNN with supply history.} The approximated price and error values obtained by $\mathrm{RNN}_v$ and $\mathrm{RNN}_\varpi$ are shown in Figure \ref{fig:RNNMLP Qbar osc results}. Although the error's magnitude does not increase compared to the previous case, we observe areas where this error rises. The errors approximating the price and optimal control are of an order $10^{-2}$. 

%%%%%%Results%%%%%%%%%%
%%%%%%Results%%%%%%%%%%
%%%%%%Results%%%%%%%%%%
\begin{figure}[htp]
     \centering
     \begin{subfigure}[b]{0.4\textwidth}
         \centering
        \caption{Analytic price vs. approx.}
         \includegraphics[width=\textwidth]{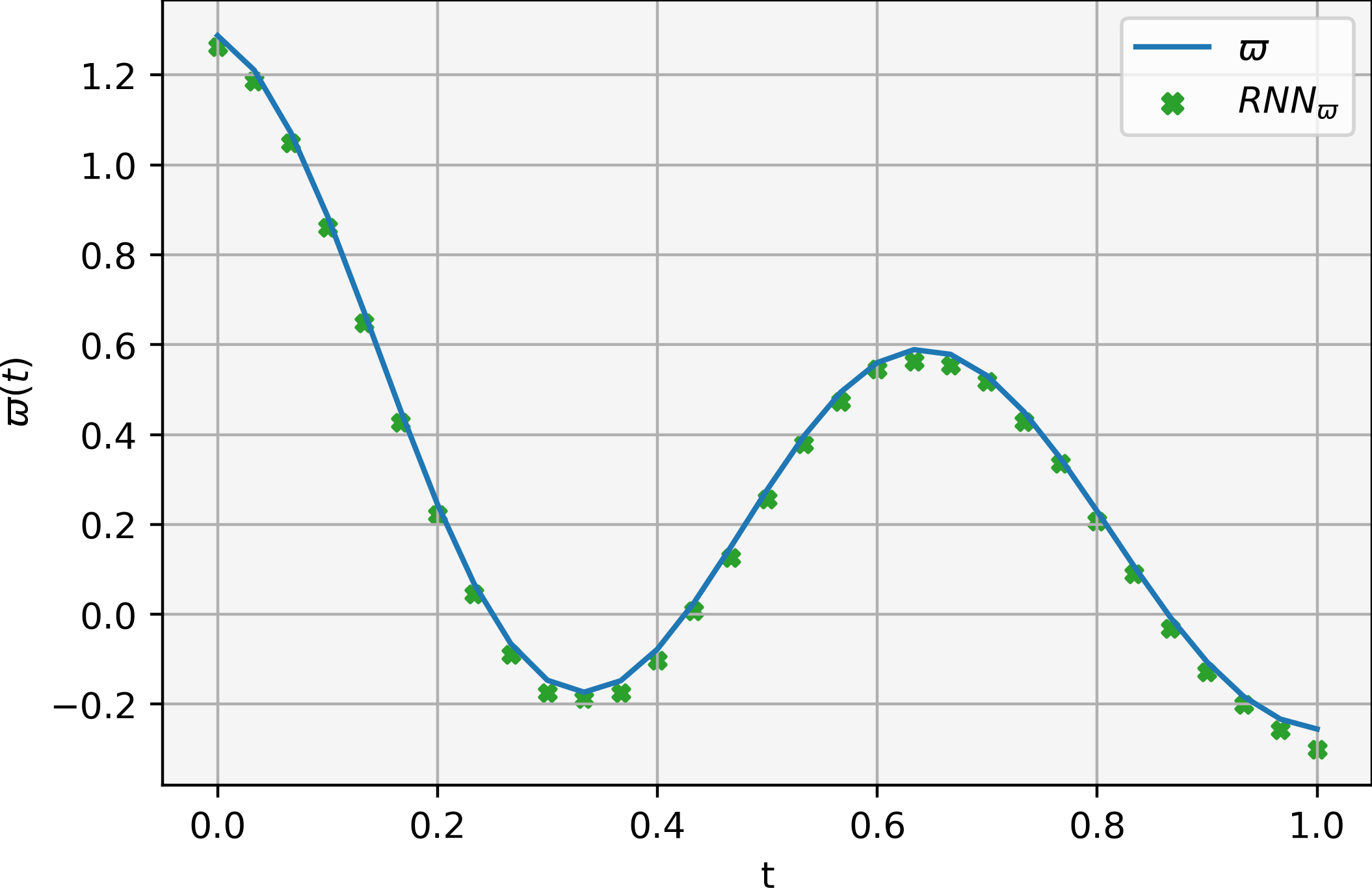}
%         \label{fig:three sin x}
     \end{subfigure}   
     \hskip0.3cm  
     \begin{subfigure}[b]{0.4\textwidth}
         \centering
        \caption{Price approx. error}
         \includegraphics[width=\textwidth]{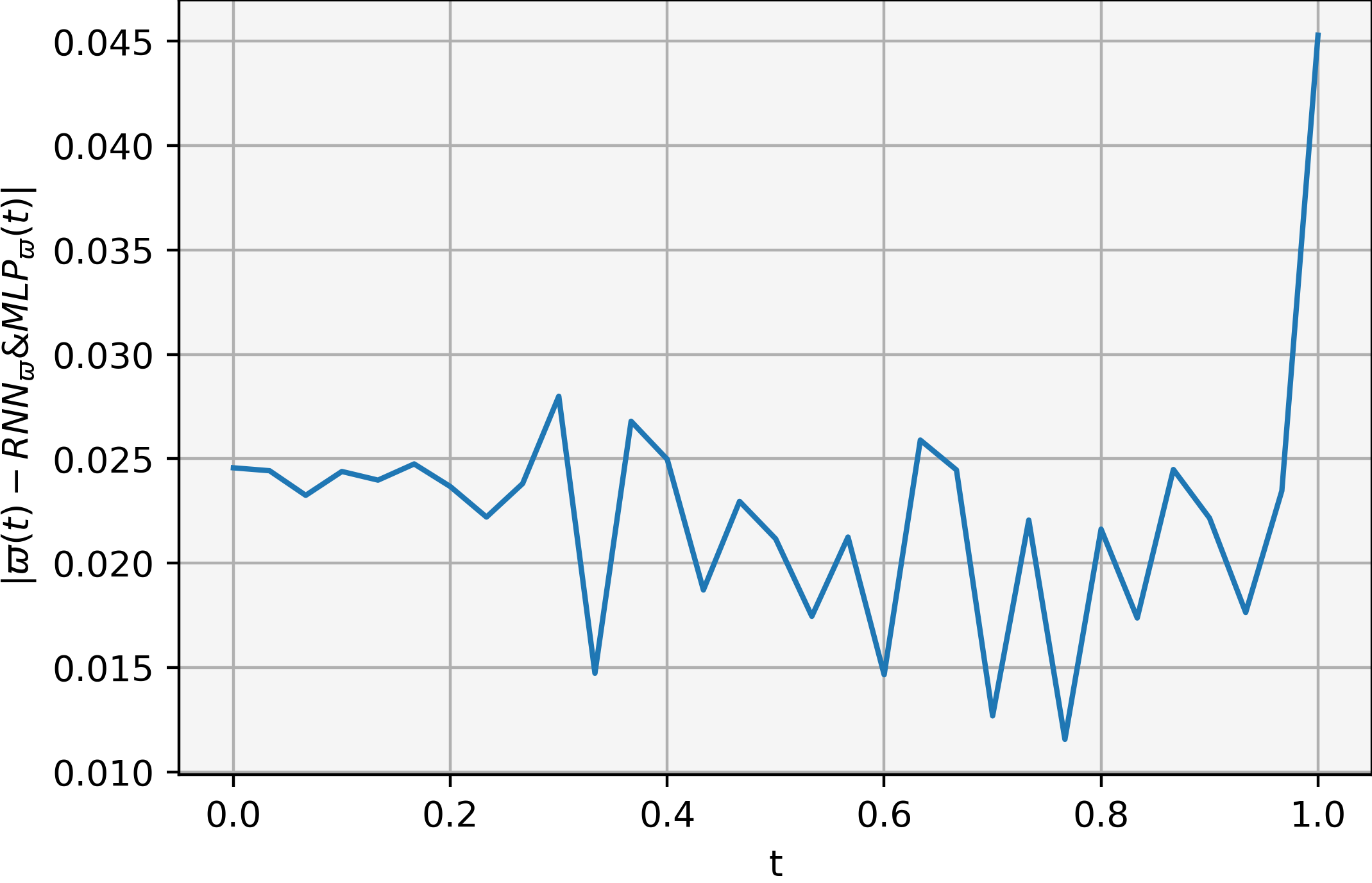}
%         \label{fig:three sin x}
     \end{subfigure}        
     
     \begin{subfigure}[b]{0.29\textwidth}
         \centering
         \caption{$v^*$ obtained by $\mathrm{RNN}_v$}
         \includegraphics[width=\textwidth]{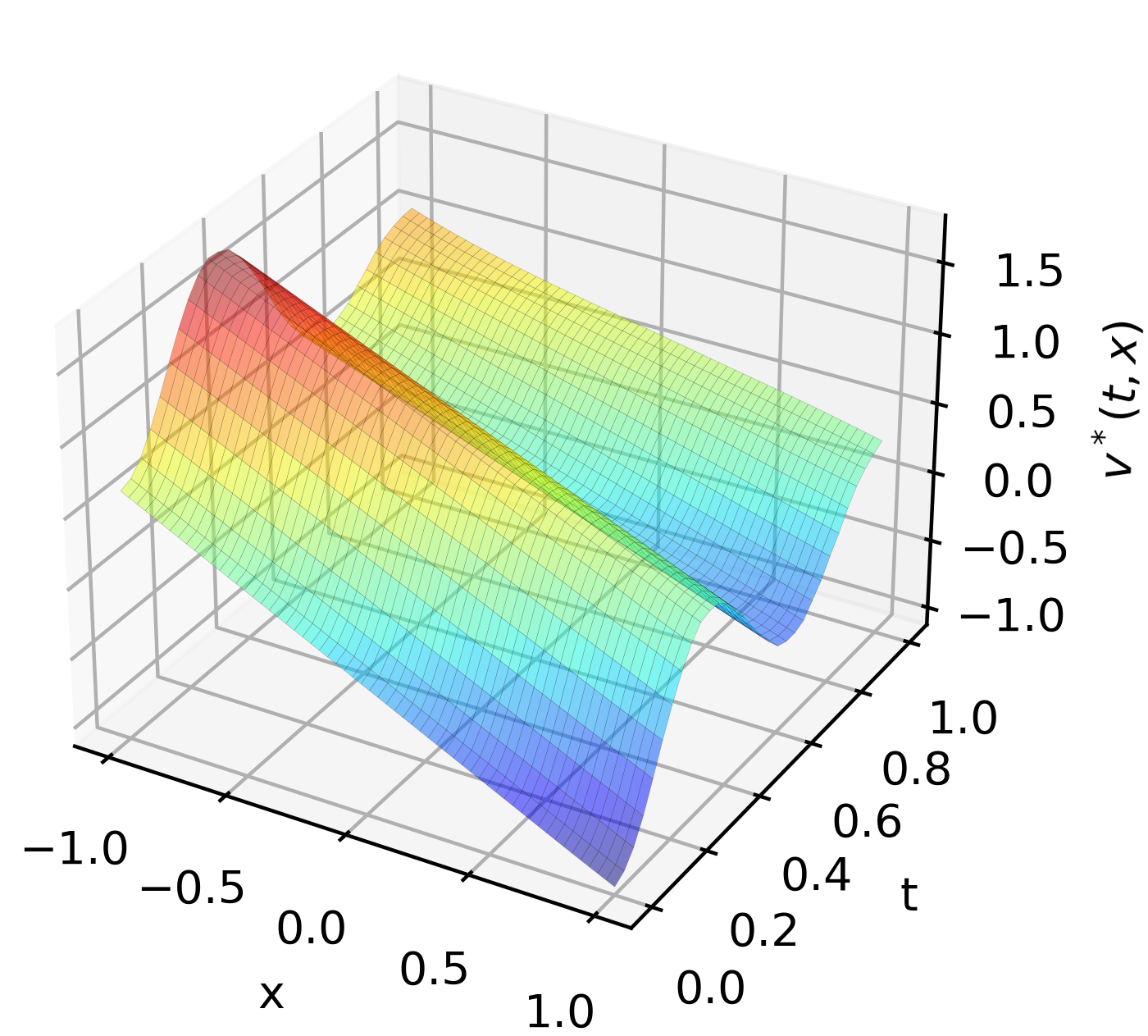}
%         \label{fig:y equals x}
     \end{subfigure}
     \hfill
     \begin{subfigure}[b]{0.29\textwidth}
         \centering
         \caption{Absolute error ($v^*$)}
         \includegraphics[width=\textwidth]{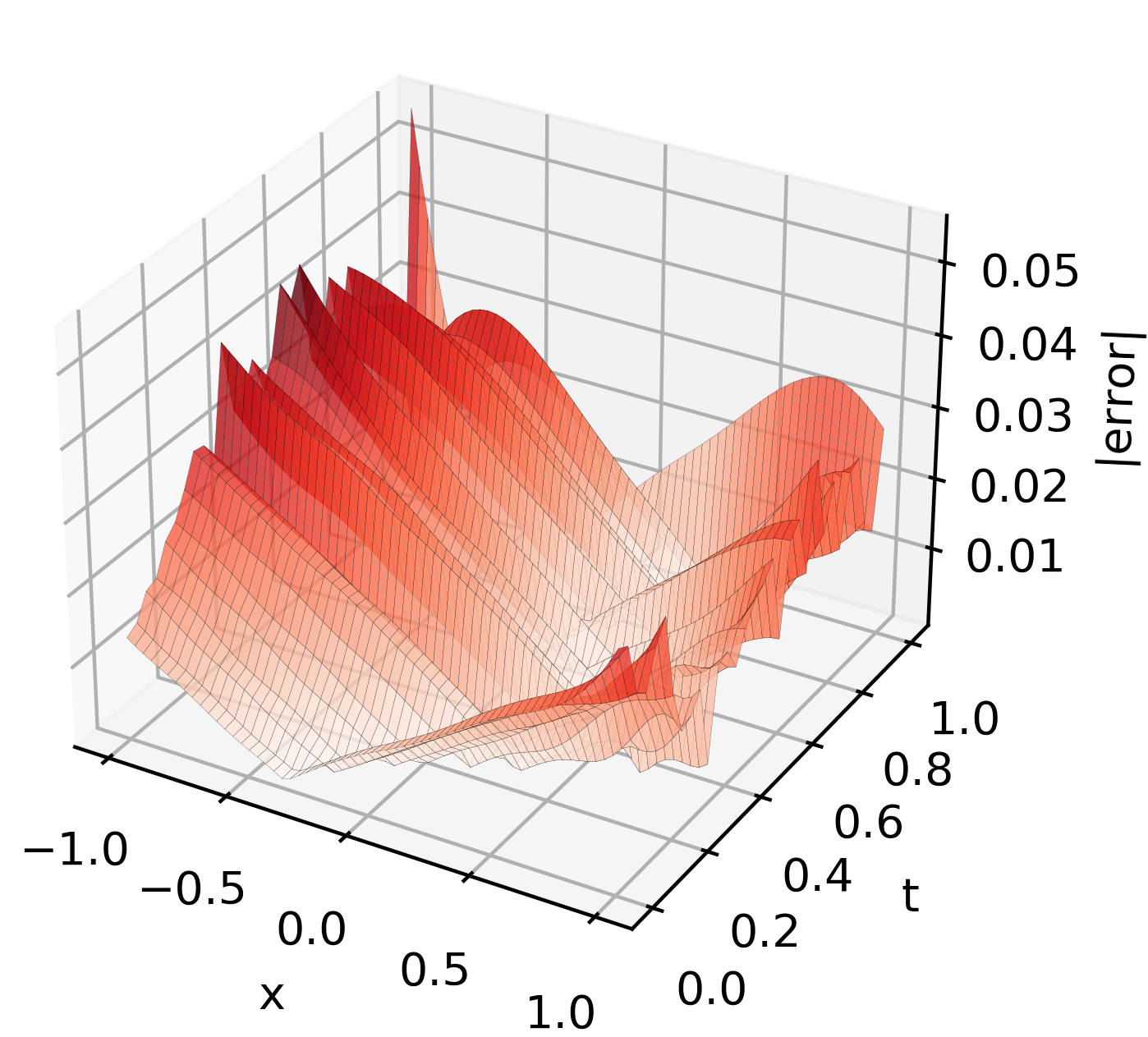}
%         \label{fig:y equals x}
     \end{subfigure}
     \hfill
     \begin{subfigure}[b]{0.29\textwidth}
         \centering
         \caption{$m$-weighted error ($v^*$)}
         \includegraphics[width=\textwidth]{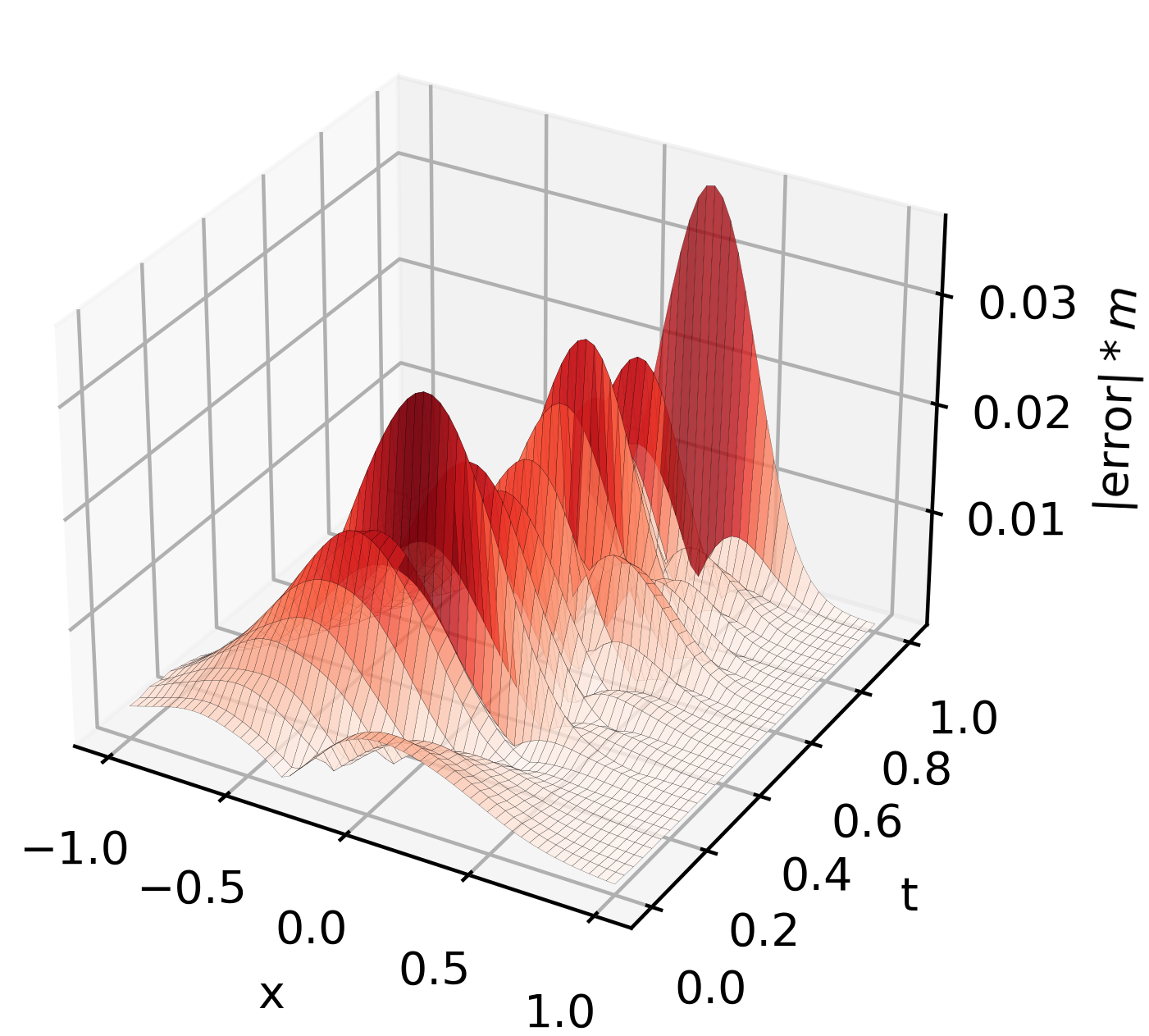}
%         \label{fig:y equals x}
     \end{subfigure}
        \caption{$\varpi$ (top) and $v^*$ (bottom) using $\mathrm{RNN}_\varpi$ and $\mathrm{RNN}_v$, respectively, for $\overline{Q}(t)=7 e^{-t} \sin(3 \pi t)$ with quadratic cost.}
        \label{fig:RNNMLP Qbar osc results}
\end{figure}

Figure \ref{fig:RNNMLP Qbar osc training} depicts the a posteriori estimate \eqref{eq:EL discrete a posteriori}. We observe the stabilization of the estimate around $300$ epochs, which suggests that more parameters and training steps would improve the approximating capabilities of this architecture. The error in the price approximation decays more consistently than the previous architecture. All error components in the a posteriori estimate behave similarly to the previous architecture, except for the oscillation of $\epsilon_q$ at the beginning of training.
%%%%%%TRAINING%%%%%%%%%%
%%%%%%TRAINING%%%%%%%%%%
%%%%%%TRAINING%%%%%%%%%%

\begin{figure}[htp]
     \centering
     \begin{subfigure}[b]{0.48\textwidth}
     \centering
        \caption{Residuals (training)}
         \includegraphics[width=\textwidth]{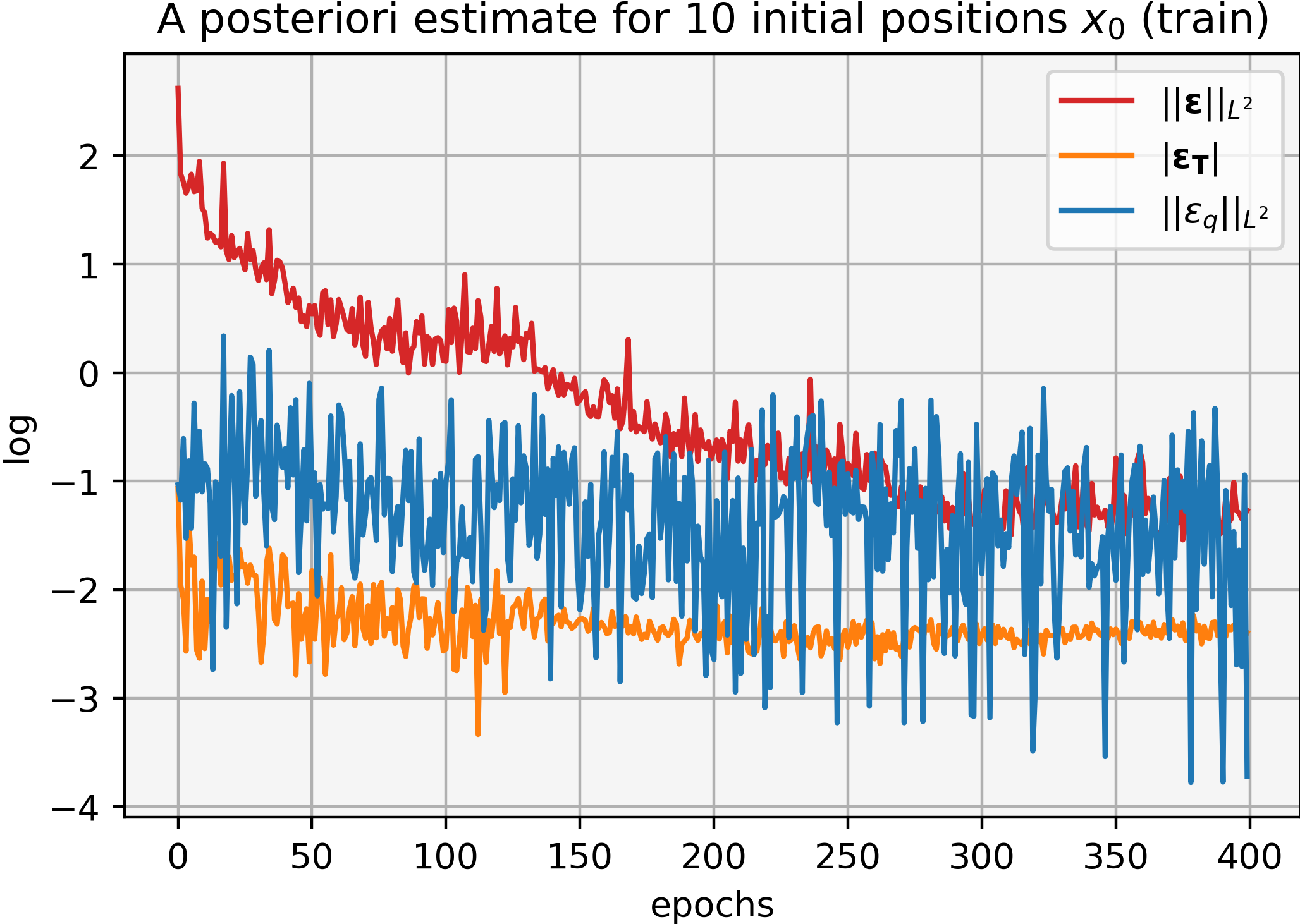}
          \end{subfigure}
     \hfill
     \begin{subfigure}[b]{0.48\textwidth}
     \centering
        \caption{Estimate vs. price error (training)}
         \includegraphics[width=\textwidth]{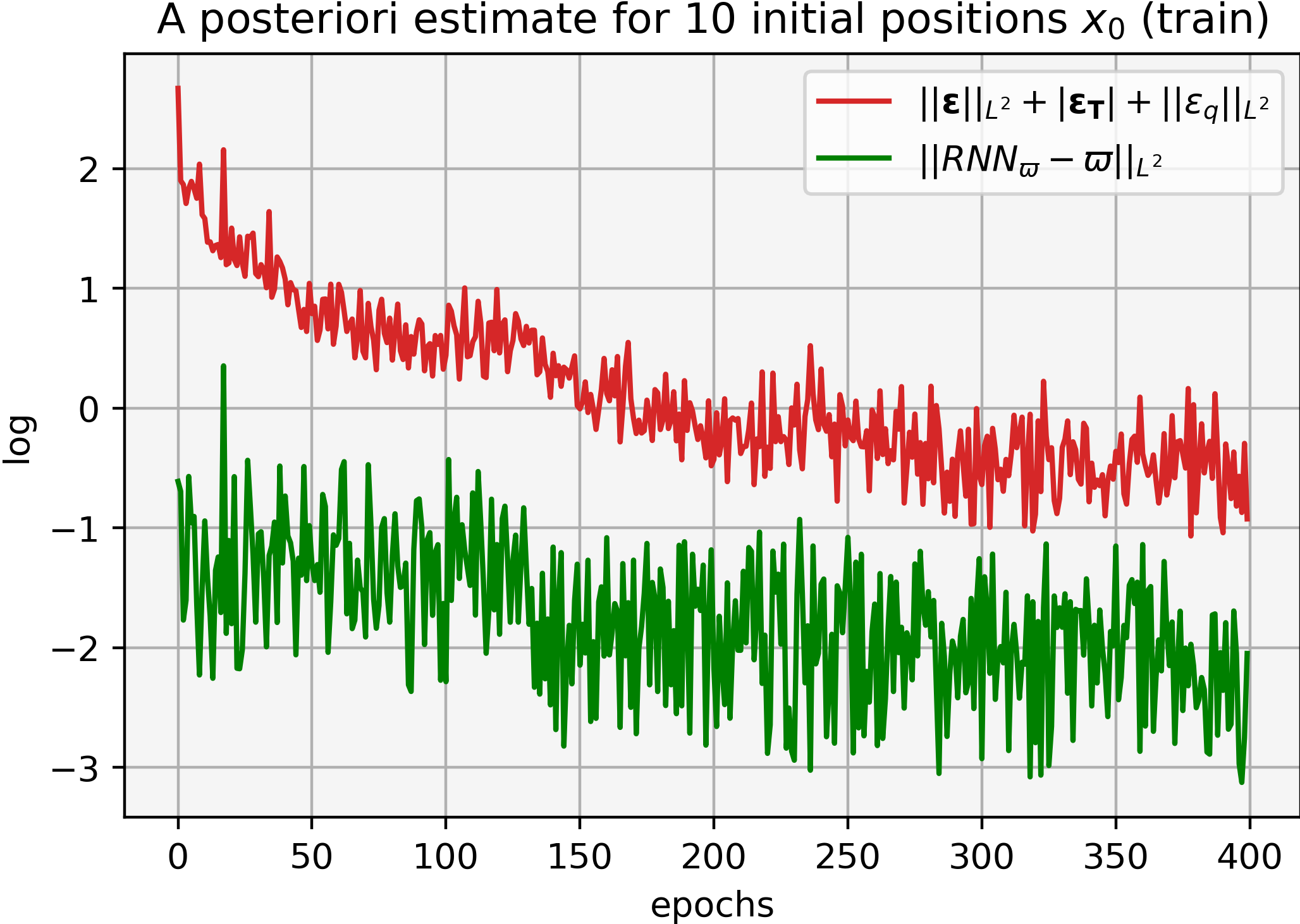}
          \end{subfigure}
%     \hfill          
%          \centering
%     \begin{subfigure}[b]{0.48\textwidth}
%     \centering
%        \caption{Residuals (test set)}
%         \includegraphics[width=\textwidth]{Plots_New/DetPrice_2RNN_V3/Test_EL_total_separate_log.png}
%          \end{subfigure}
%     \hfill
%     \begin{subfigure}[b]{0.48\textwidth}
%     \centering
%        \caption{Estimate vs. price error (test set)}
%         \includegraphics[width=\textwidth]{Plots_New/DetPrice_2RNN_V3/Test_EL_total_log.png}
%          \end{subfigure}
         \caption{Residuals (left), a posteriori estimate \eqref{eq:EL discrete a posteriori} and price error (right) during training (log. scale) using $\mathrm{RNN}_\varpi$ and $\mathrm{RNN}_v$ for $\overline{Q}(t)=7 e^{-t} \sin(3 \pi t)$ with quadratic cost.}
%         \label{fig:three sin x}
        \label{fig:RNNMLP Qbar osc training}
\end{figure}

{\bf Comparison.} We observe that the $RNN$ architecture has a more consistent decay in the a posteriori estimate compared to the $MLP$. The $RNN$ has a peak at terminal time in the price approximation error, but otherwise, no significant difference in the approximating performance is observed. 

\subsection{Non-quadratic cost}
By adding a power-like term to the quadratic cost, we consider
\[
	L(x,v)=\frac{\eta}{2} \left( x - \kappa\right)^2 + \frac{c}{4} v^2 + \frac{c}{8}(v-1)^4, \quad \mbox{and} \quad u_T\left(x\right) = \frac{\gamma}{2}\left(x-\zeta\right)^2,
\]
where $\kappa,\zeta \in \Rr$, $\eta,\gamma\geq 0$, and $c>0$. The choice of $L$ gives agents the least cost w.r.t. $v$ when $v \approx 0.317672$. Notice that $L$ satisfies all assumptions in Section \ref{sec:Assumptions}. In this setting, we have no explicit solutions for the MFG system \eqref{eq:MFG system}. However, the binomial-tree approach used in \cite{gomes2021randomsupply} allows us to solve the $N$-player price formation problem for a relatively large value of $N$ and use the corresponding price and optimal trajectories as a benchmark. Because the evolution of our supply is deterministic, the binomial tree collapses to a single branch, and we obtain a high-dimensional convex optimization problem that is solvable with standard optimization tools. The price is obtained on the interval $[0,T-h]$. We use $N=500$ and consider only the case $\overline{Q}(t) = 7 e^{-t} \sin(3 \pi t)$ and $q_0=0$. Figure \ref{fig:Qw NonQ AnalSol} illustrates the price solving the $N$-player problem.

%%%%%%Analytical sol%%%%%%%%%%
%%%%%%Analytical sol%%%%%%%%%%
%%%%%%Analytical sol%%%%%%%%%%
\begin{figure}[htp]
     \centering
      \includegraphics[width=0.4\textwidth]{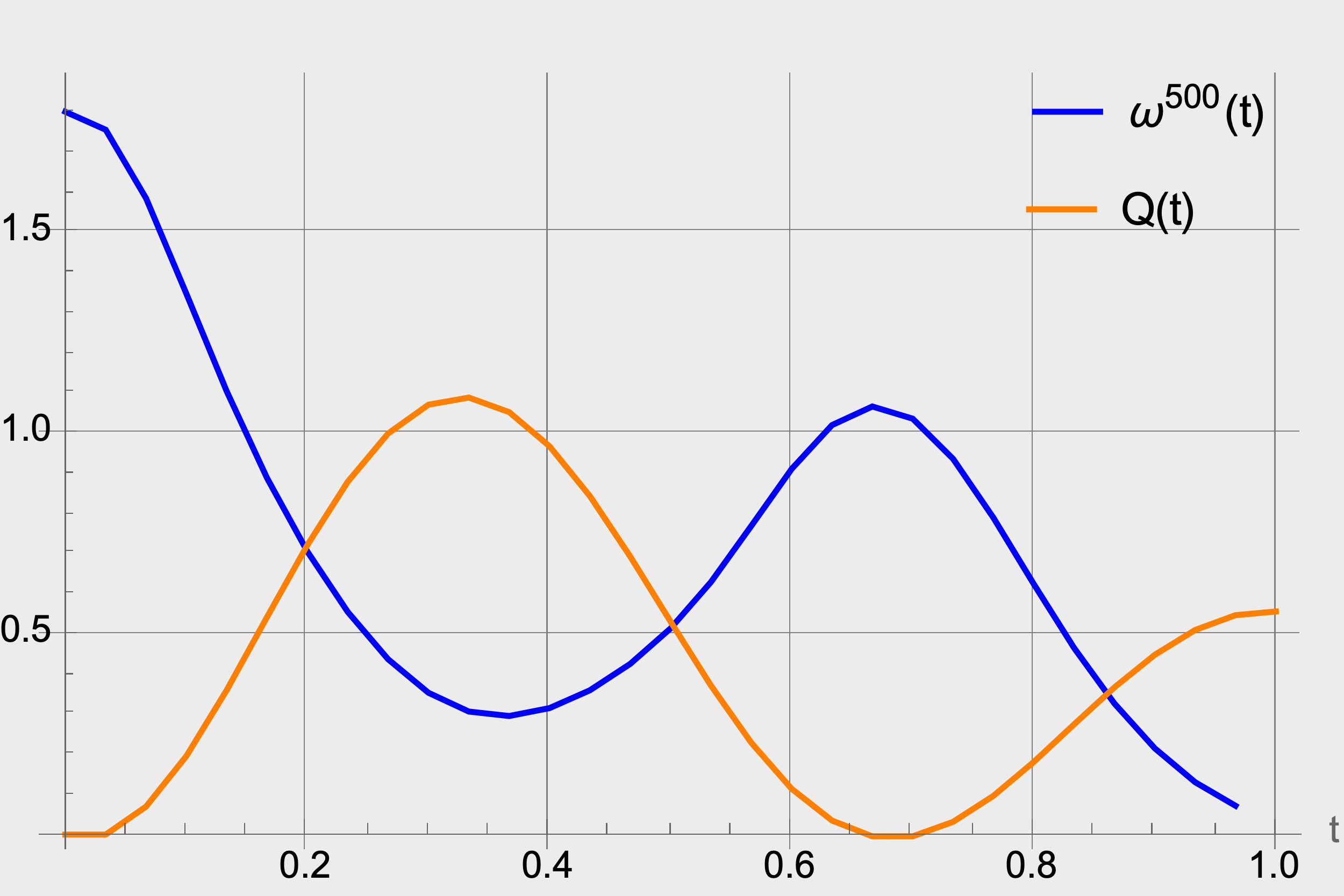}
        \caption{Supply and price ($N=500$-player game) for $\overline{Q}=7 e^{-t} \sin(3 \pi t)$ with non-quadratic cost.}
        \label{fig:Qw NonQ AnalSol}
\end{figure}

{\bf MLP with instantaneous feedback.} Figure \ref{fig:MLP NonQ Qbar osc results} shows the price and optimal control approximations, including the error on the approximated optimal trajectories for a sample of $10$ initial positions out of the $500$ used to compute the price. The error in the price approximation is of an order $10^{-1}$, while that of the optimal control is of an order $10^{-2}$ for the sample of trajectories.

%%%%%%Results%%%%%%%%%%
%%%%%%Results%%%%%%%%%%
%%%%%%Results%%%%%%%%%%
\begin{figure}[htp]
     \centering
     \begin{subfigure}[b]{0.4\textwidth}
         \centering
        \caption{Numerical price vs. approx.}
         \includegraphics[width=\textwidth]{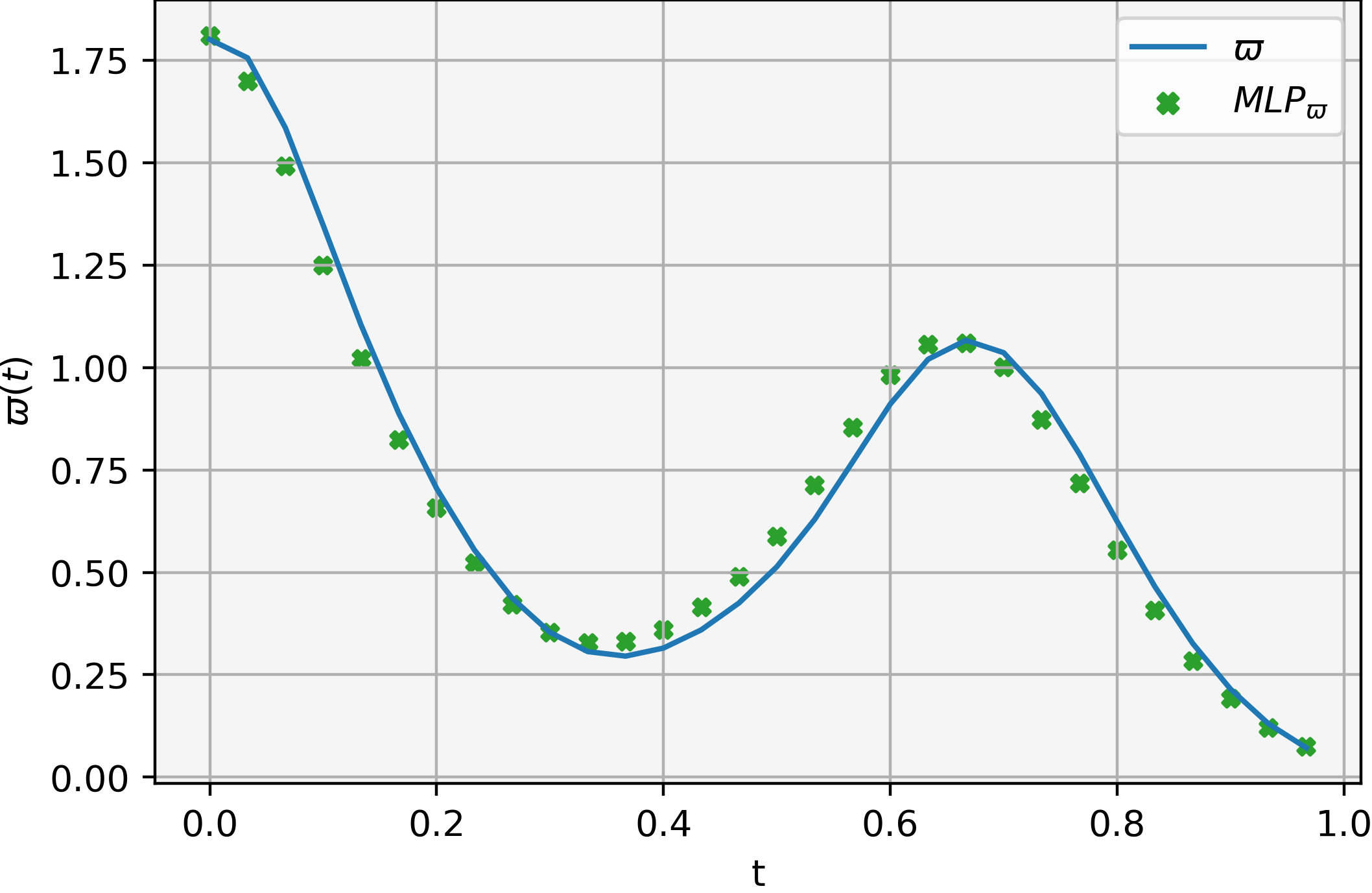}
%         \label{fig:three sin x}
     \end{subfigure}   
     \hskip0.3cm  
          \begin{subfigure}[b]{0.4\textwidth}
         \centering
        \caption{Numerical price vs. approx.}
         \includegraphics[width=\textwidth]{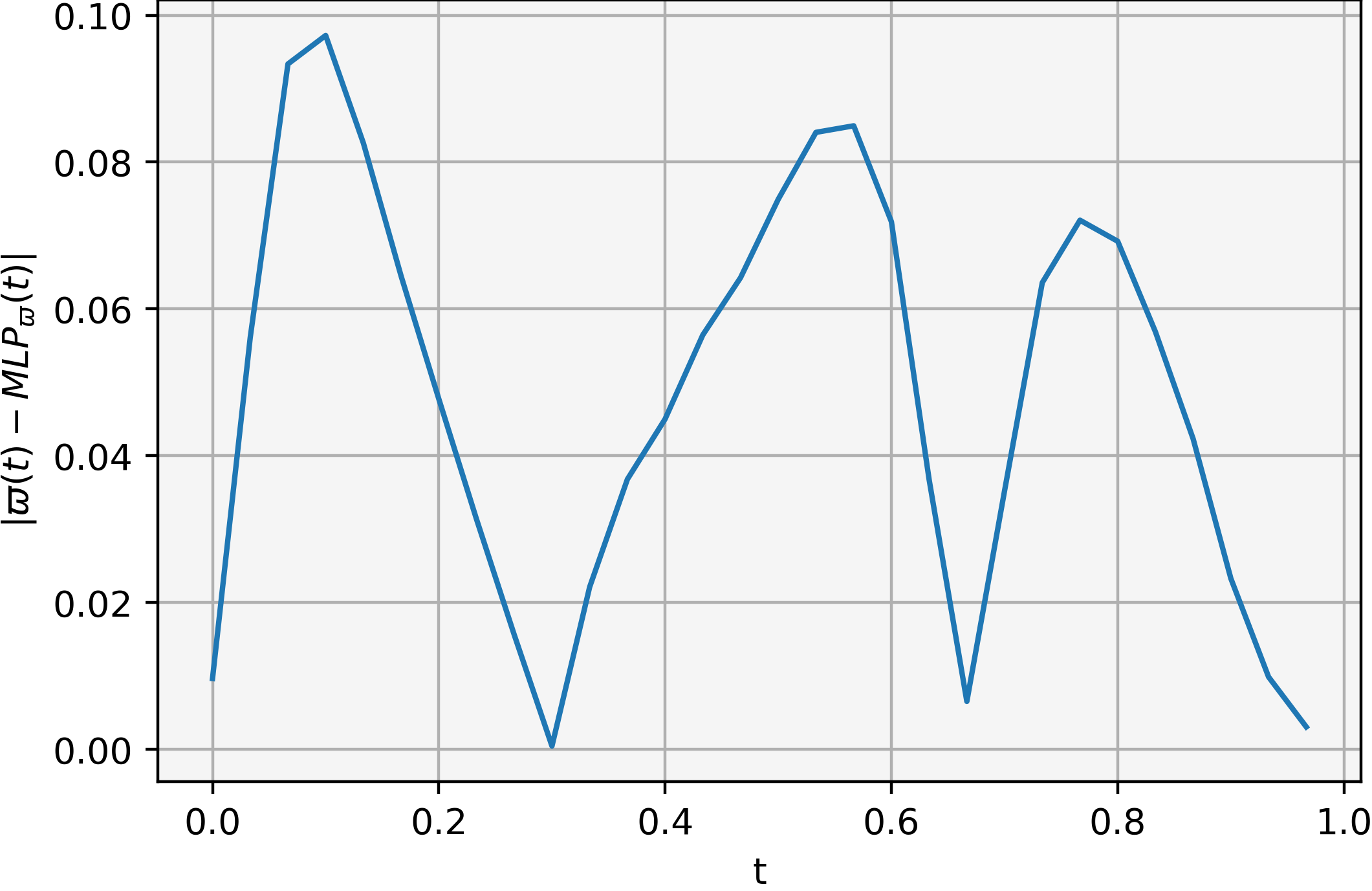}
%         \label{fig:three sin x}
     \end{subfigure}        
     
       \begin{subfigure}[b]{0.27\textwidth}
         \centering
         \caption{$v^*$ obtained by $\mathrm{MLP}_v$}
         \includegraphics[width=\textwidth]{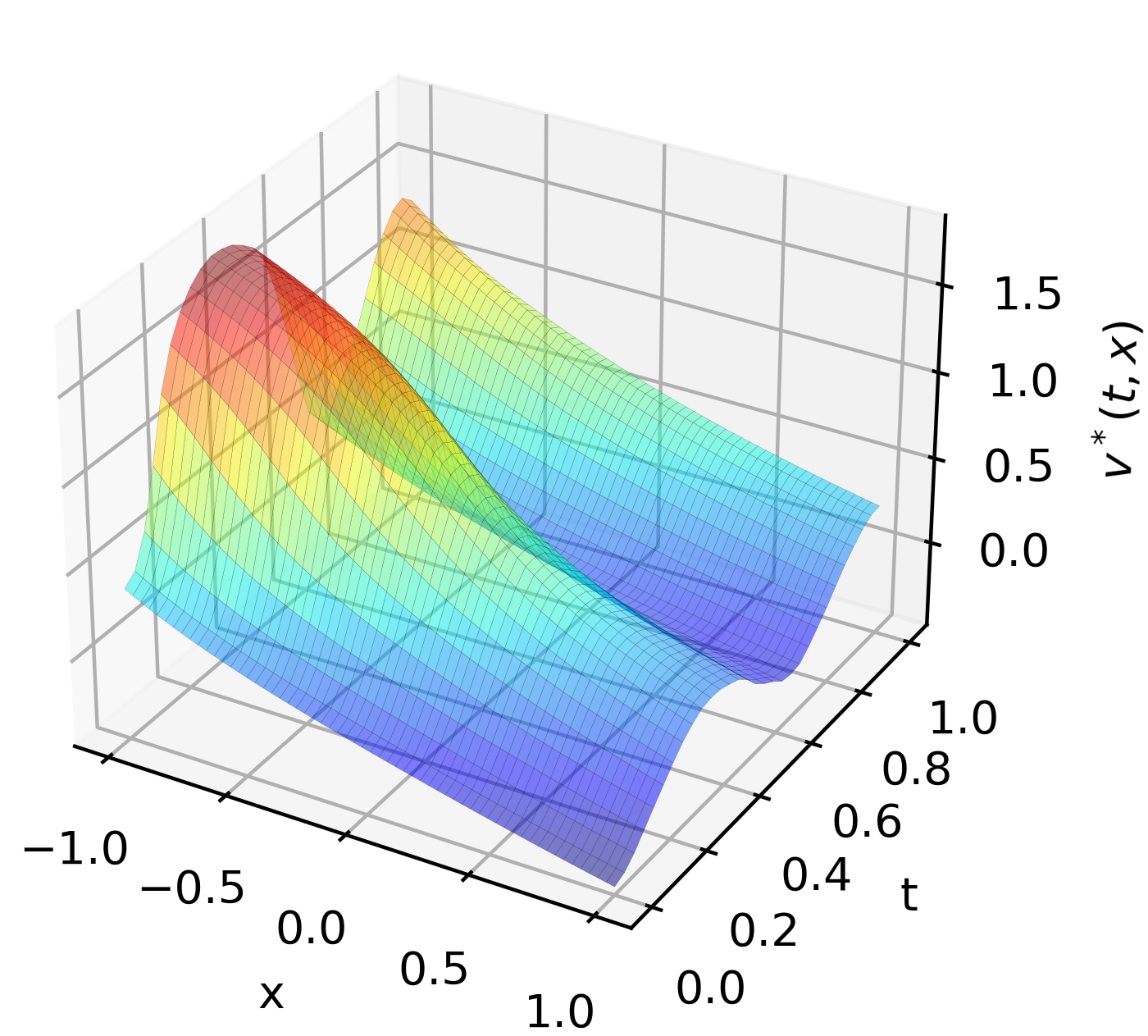}
%         \label{fig:y equals x}
     \end{subfigure}
     \hfill
     \begin{subfigure}[b]{0.34\textwidth}
         \centering
         \caption{Trajectories approx.}
         \includegraphics[width=\textwidth]{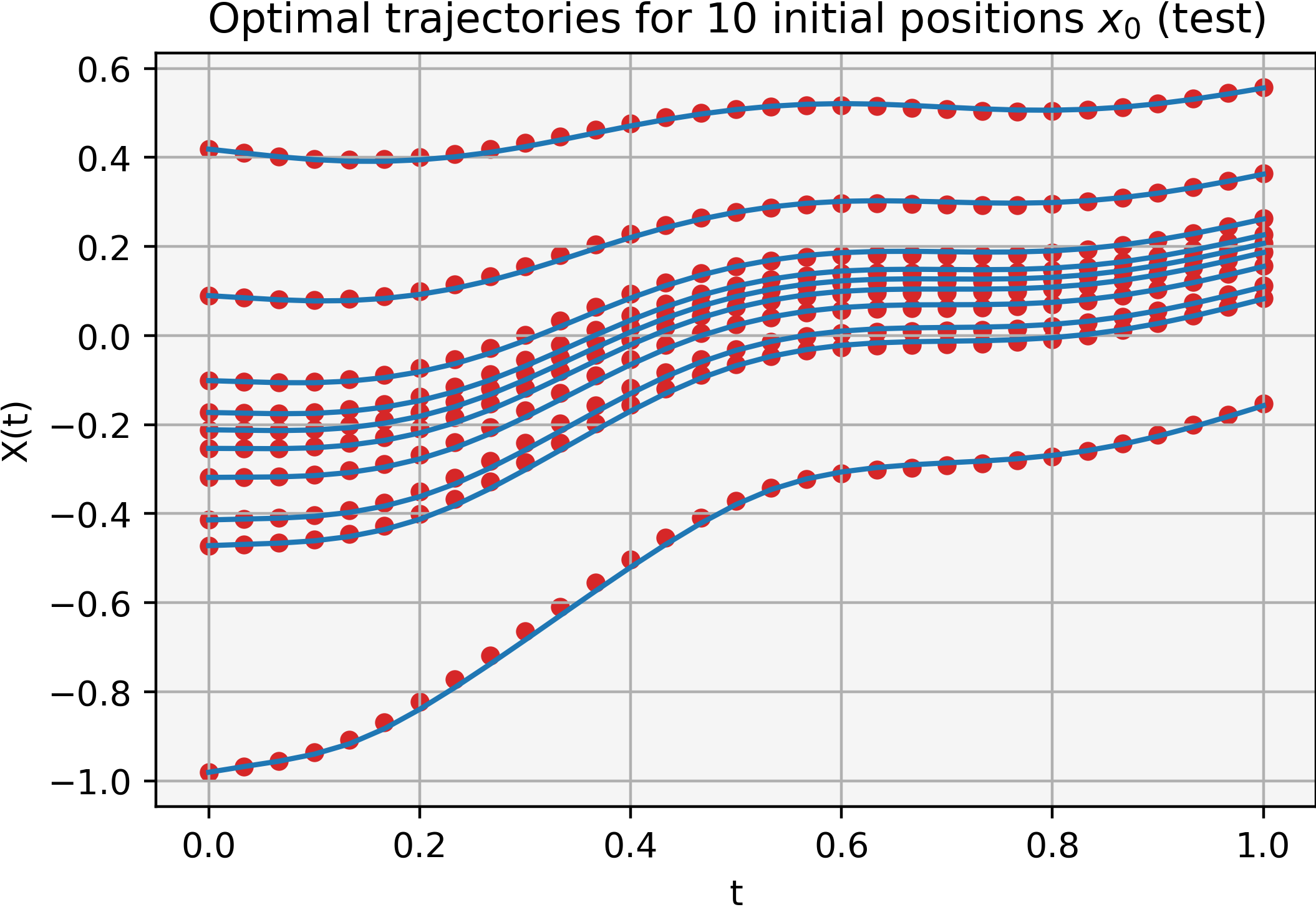}
%         \label{fig:y equals x}
     \end{subfigure}
     \hfill
     \begin{subfigure}[b]{0.34\textwidth}
         \centering
         \caption{Trajectories (error)}
         \includegraphics[width=\textwidth]{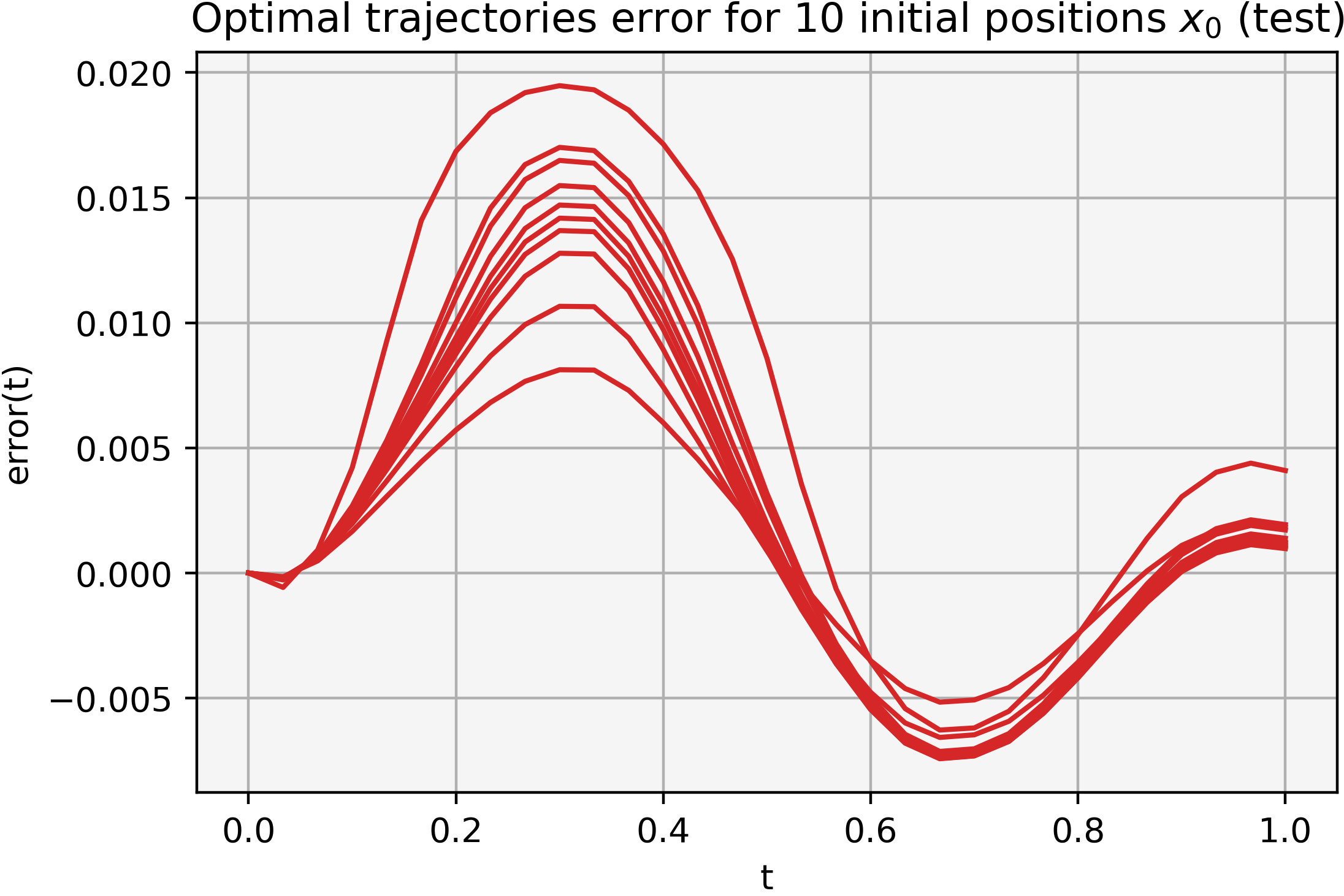}
%         \label{fig:y equals x}
     \end{subfigure}
        \caption{$\varpi$ (top) and $v^*$ (bottom) using $\mathrm{MLP}_\varpi$ and $\mathrm{MLP}_v$, respectively, for $\overline{Q}(t)=7 e^{-t} \sin(3 \pi t)$ with non-quadratic cost.}
        \label{fig:MLP NonQ Qbar osc results}
\end{figure}

Figure \ref{fig:MLP Qbar osc NonQ training} shows the a posteriori estimate decays to oscillate around an order $10^{-1}$, while the error in the price approximation stabilizes around the same order. All components of error in the estimate decay consistently, except for $\epsilon_q$, which oscillates drastically.  

%%%%%%TRAINING%%%%%%%%%%
%%%%%%TRAINING%%%%%%%%%%
%%%%%%TRAINING%%%%%%%%%%

\begin{figure}[htp]
     \centering
     \begin{subfigure}[b]{0.48\textwidth}
     \centering
        \caption{Residuals (training)}
         \includegraphics[width=\textwidth]{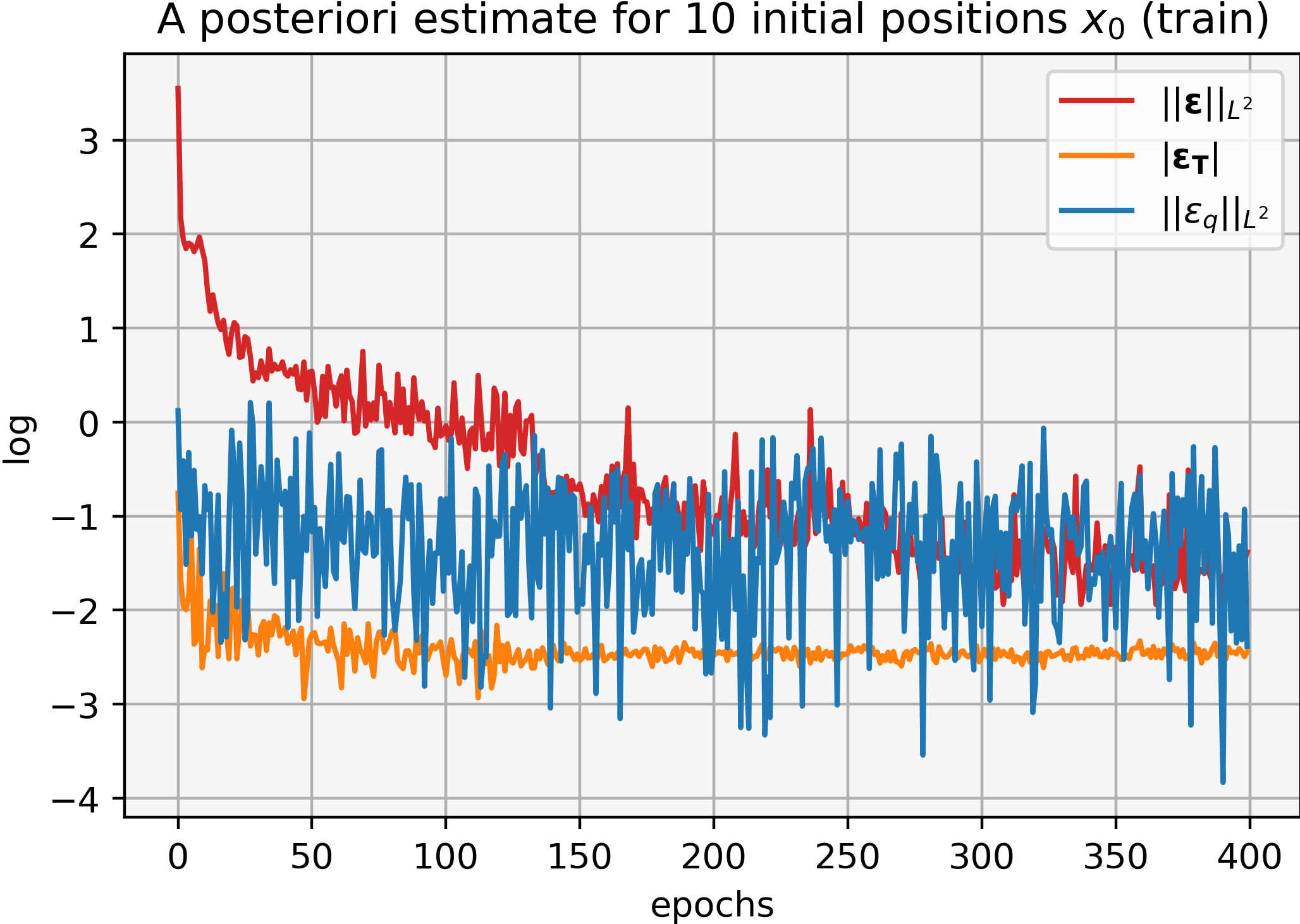}
          \end{subfigure}
     \hfill
     \begin{subfigure}[b]{0.48\textwidth}
     \centering
        \caption{Estimate vs. price error (training)}
         \includegraphics[width=\textwidth]{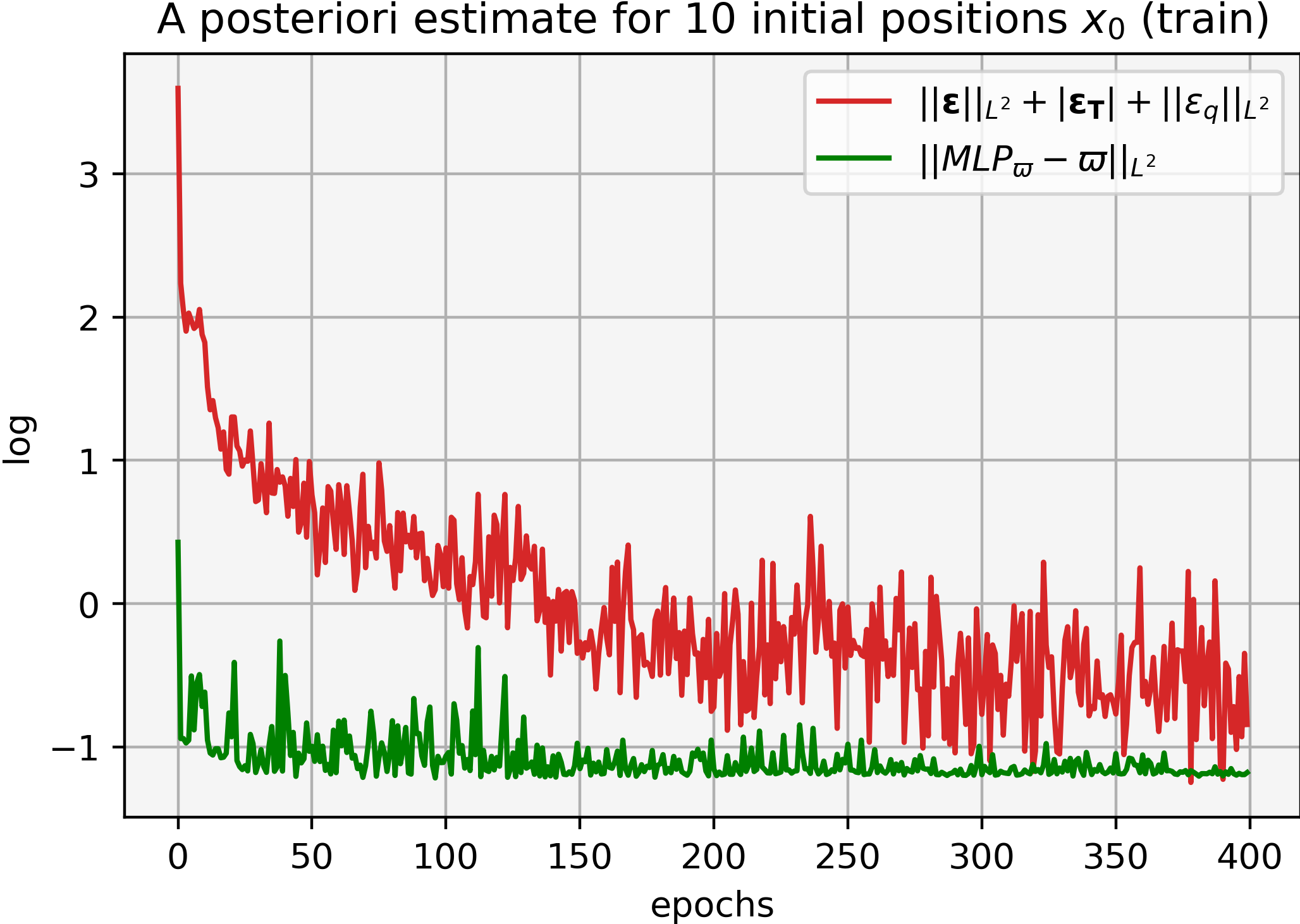}
          \end{subfigure}
%     \hfill          
%          \centering
%     \begin{subfigure}[b]{0.48\textwidth}
%     \centering
%        \caption{Residuals (test set)}
%         \includegraphics[width=\textwidth]{Plots_New/DetPrice_2MLP_NonQ/Test_EL_total_separate_log.png}
%          \end{subfigure}
%     \hfill
%     \begin{subfigure}[b]{0.48\textwidth}
%     \centering
%        \caption{Estimate vs. price error (test set)}
%         \includegraphics[width=\textwidth]{Plots_New/DetPrice_2MLP_NonQ/Test_EL_total_log.png}
%          \end{subfigure}
         \caption{Residuals (left), a posteriori estimate \eqref{eq:EL discrete a posteriori} and price error (right) during training (log. scale) using $\mathrm{MLP}_\varpi$ and $\mathrm{MLP}_v$ for $\overline{Q}(t)=7 e^{-t} \sin(3 \pi t)$ with non-quadratic cost.}
%         \label{fig:three sin x}
        \label{fig:MLP Qbar osc NonQ training}
\end{figure}

{\bf RNN with supply history.} Figure \ref{fig:RNN NonQ Qbar osc results} shows the price and optimal control approximations, including the error on the approximated optimal trajectories (for the same sample as with the previous architecture). The price and optimal control errors exhibit the same order as the previous $NN$ configuration, $10^{-1}$, and $10^{-2}$, respectively. Moreover, the error in both architectures oscillates in the same time intervals, which may suggest that the approximation properties are strongly affected by the non-quadratic cost structure.

%%%%%%Results%%%%%%%%%%
%%%%%%Results%%%%%%%%%%
%%%%%%Results%%%%%%%%%%
\begin{figure}[htp]
     \centering
     \begin{subfigure}[b]{0.4\textwidth}
         \centering
        \caption{Numerical price vs. approx.}
         \includegraphics[width=\textwidth]{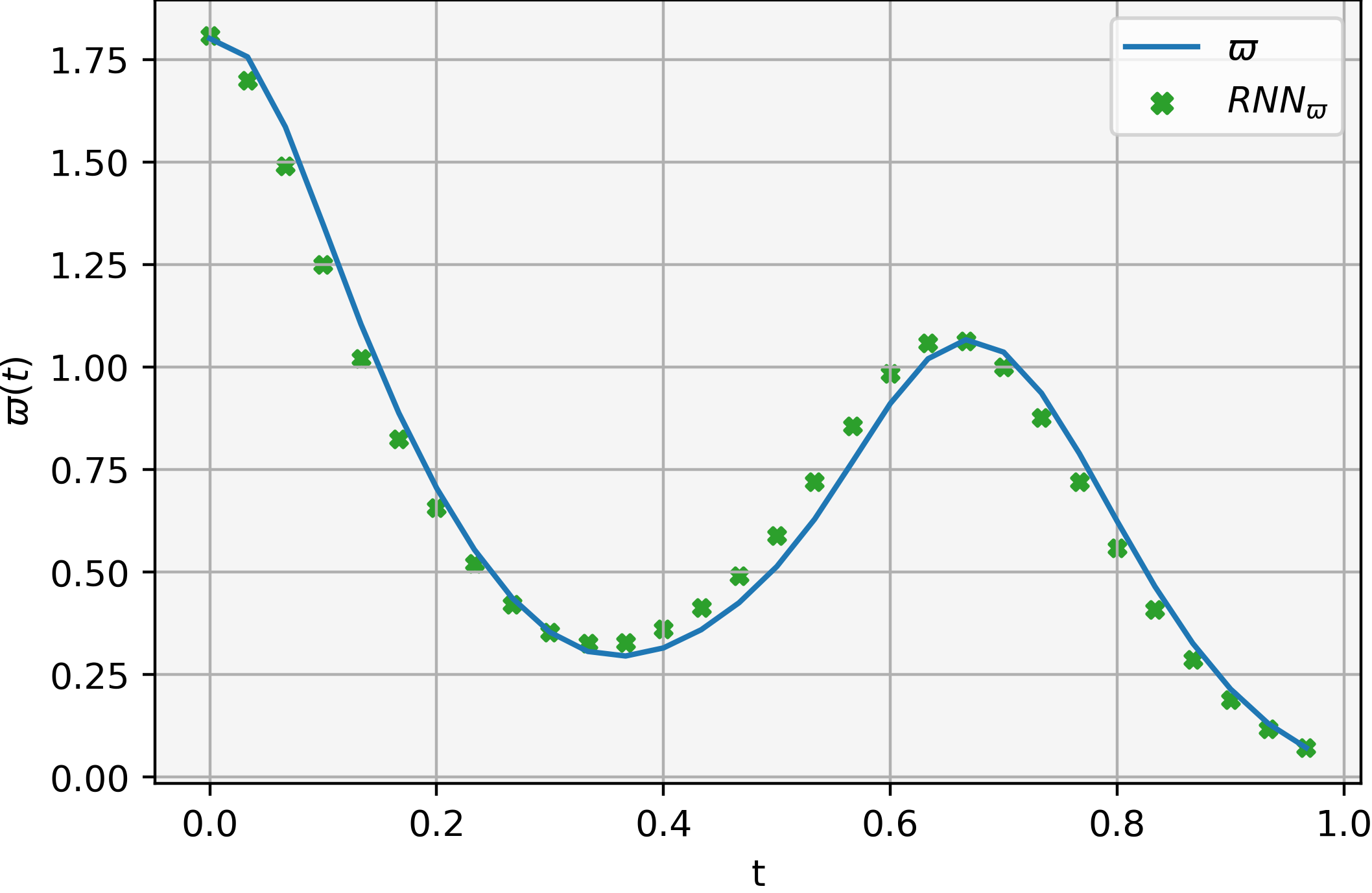}
%         \label{fig:three sin x}
     \end{subfigure}   
     \hskip0.3cm  
     \begin{subfigure}[b]{0.4\textwidth}
         \centering
        \caption{Price approx. error}
         \includegraphics[width=\textwidth]{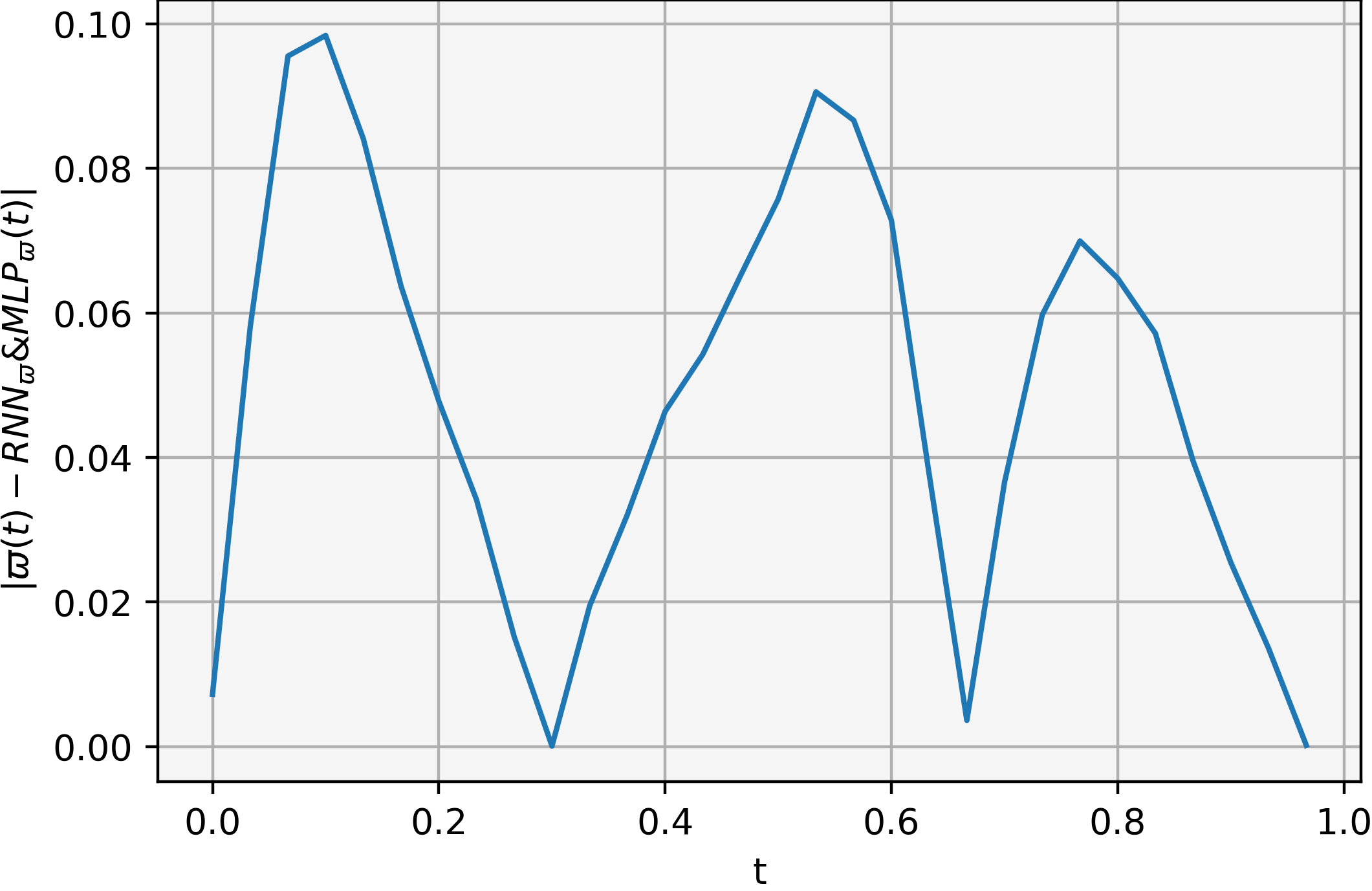}
%         \label{fig:three sin x}
     \end{subfigure}       
     
       \begin{subfigure}[b]{0.27\textwidth}
         \centering
         \caption{$v^*$ obtained by $\mathrm{MLP}_v$}
         \includegraphics[width=\textwidth]{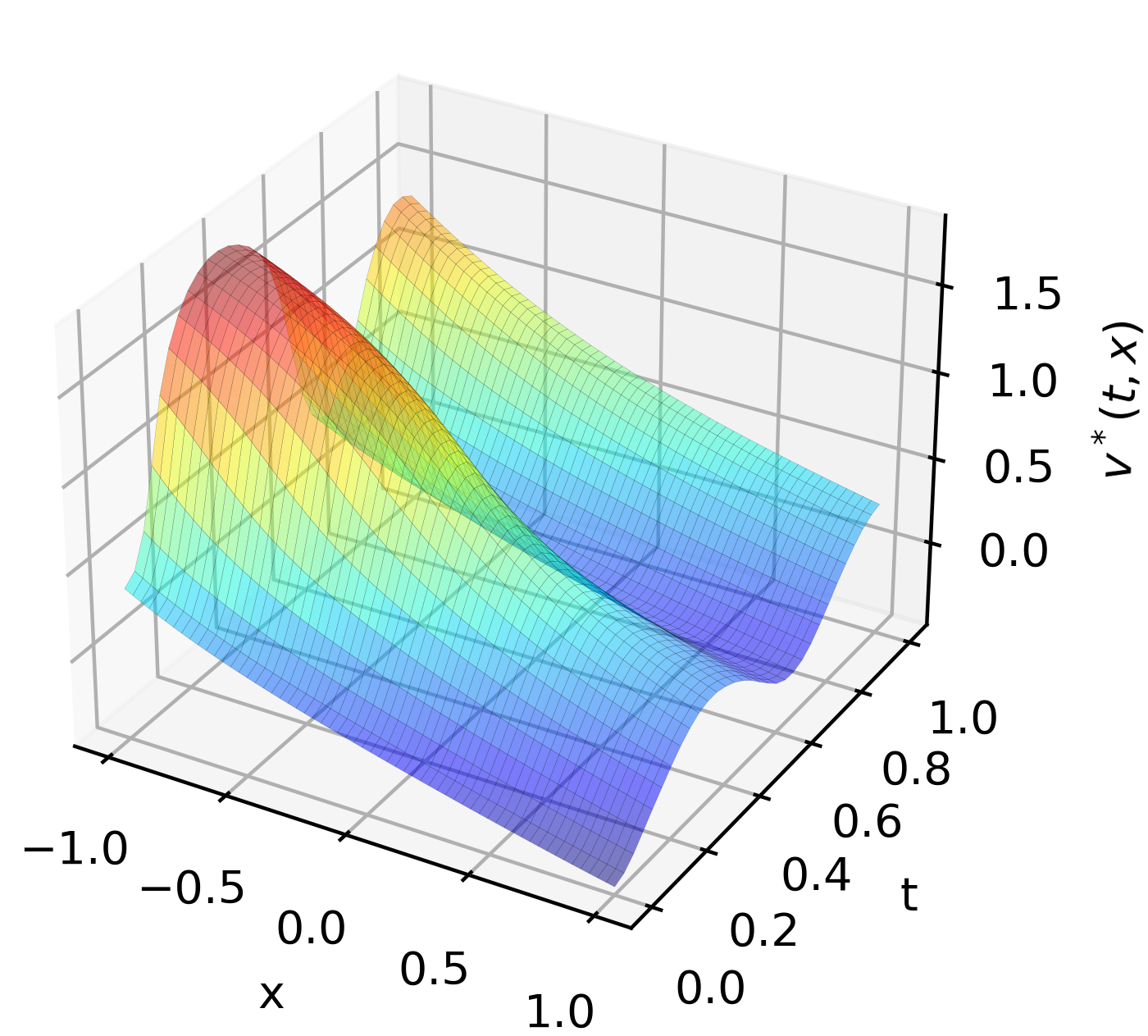}
%         \label{fig:y equals x}
     \end{subfigure}
	\hfill
     \begin{subfigure}[b]{0.34\textwidth}
         \centering
         \caption{Trajectories approx.}
         \includegraphics[width=\textwidth]{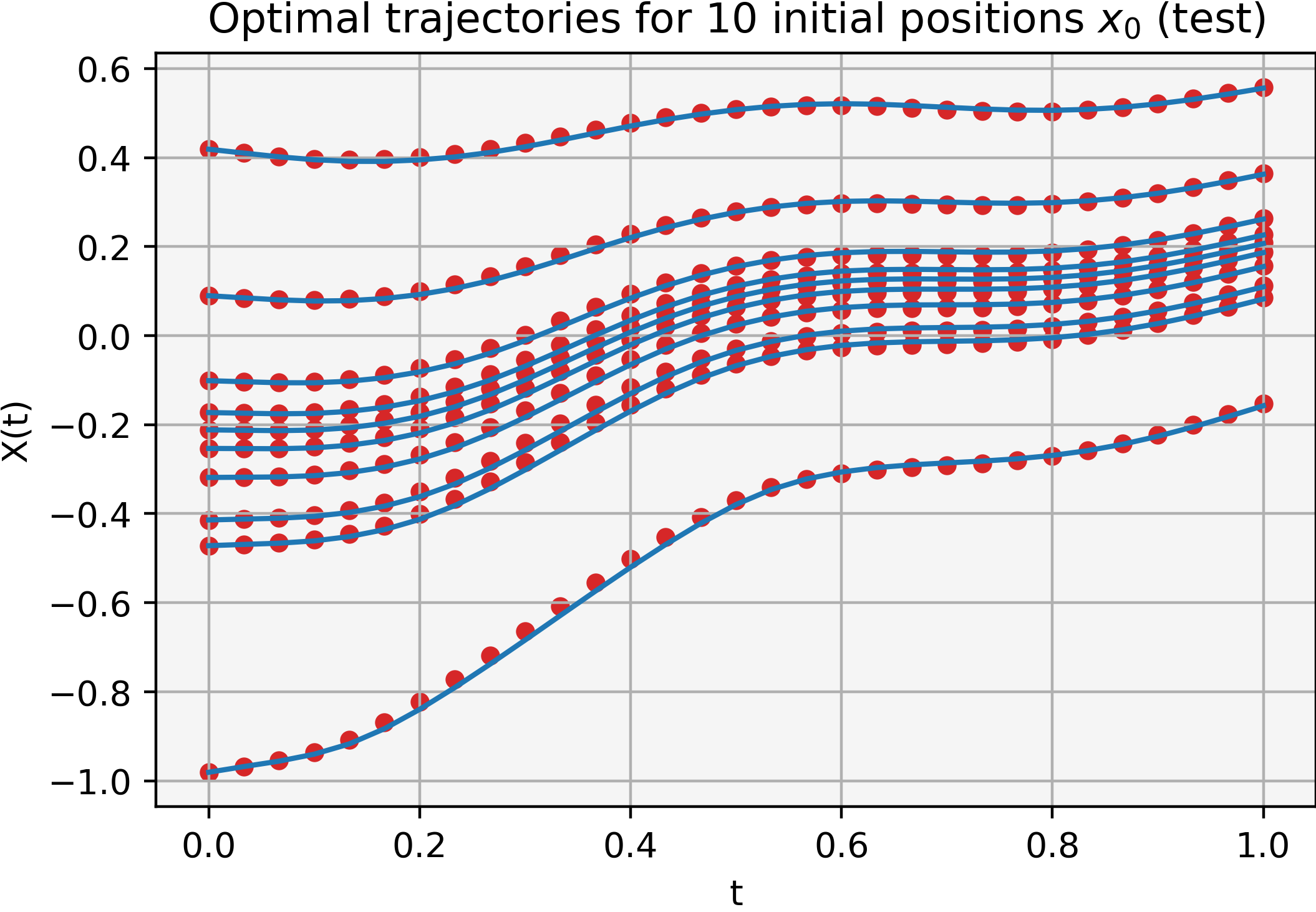}
%         \label{fig:y equals x}
     \end{subfigure}
	\hfill
     \begin{subfigure}[b]{0.34\textwidth}
         \centering
         \caption{Trajectories (error)}
         \includegraphics[width=\textwidth]{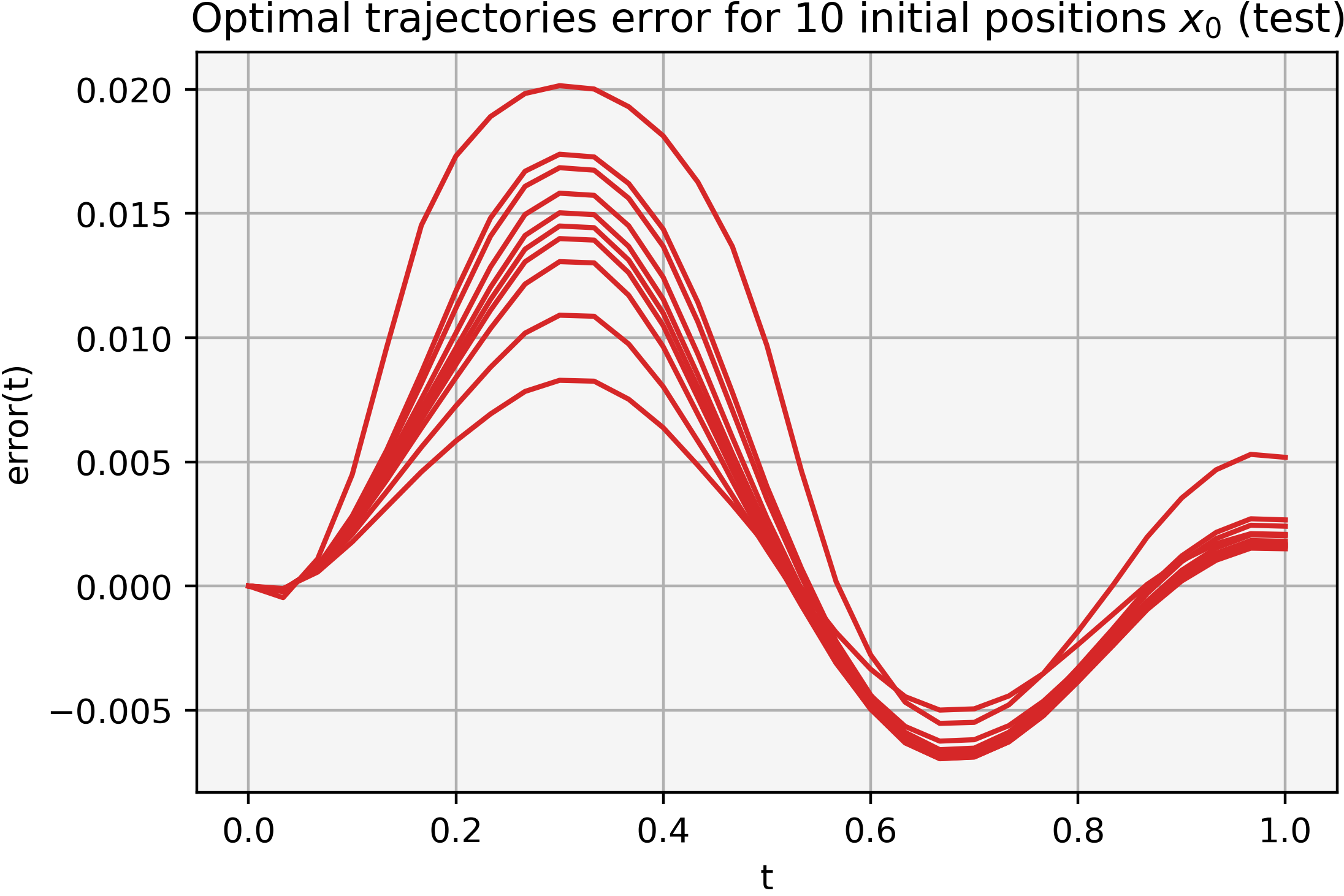}
%         \label{fig:y equals x}
     \end{subfigure}
        \caption{$\varpi$ (top) and $v^*$ (bottom) using $\mathrm{RNN}_\varpi$ and $\mathrm{RNN}_v$, respectively, for $\overline{Q}(t)=7 e^{-t} \sin(3 \pi t)$ with non-quadratic cost.}
        \label{fig:RNN NonQ Qbar osc results}
\end{figure}

Figure \ref{fig:RNN Qbar osc NonQ training} shows the a posteriori estimate with more consistent decay and less oscillatory behavior than the previous architecture. The error in the price approximation stabilizes around an order of $10^{-1}$, as it did in the previous configuration. As in the previous architecture, all components of error decay consistently. However, an earlier and less oscillatory decay is observed compared to the previous configuration.
%%%%%%TRAINING%%%%%%%%%%
%%%%%%TRAINING%%%%%%%%%%
%%%%%%TRAINING%%%%%%%%%%

\begin{figure}[htp]
     \centering
     \begin{subfigure}[b]{0.48\textwidth}
     \centering
        \caption{Residuals (training)}
         \includegraphics[width=\textwidth]{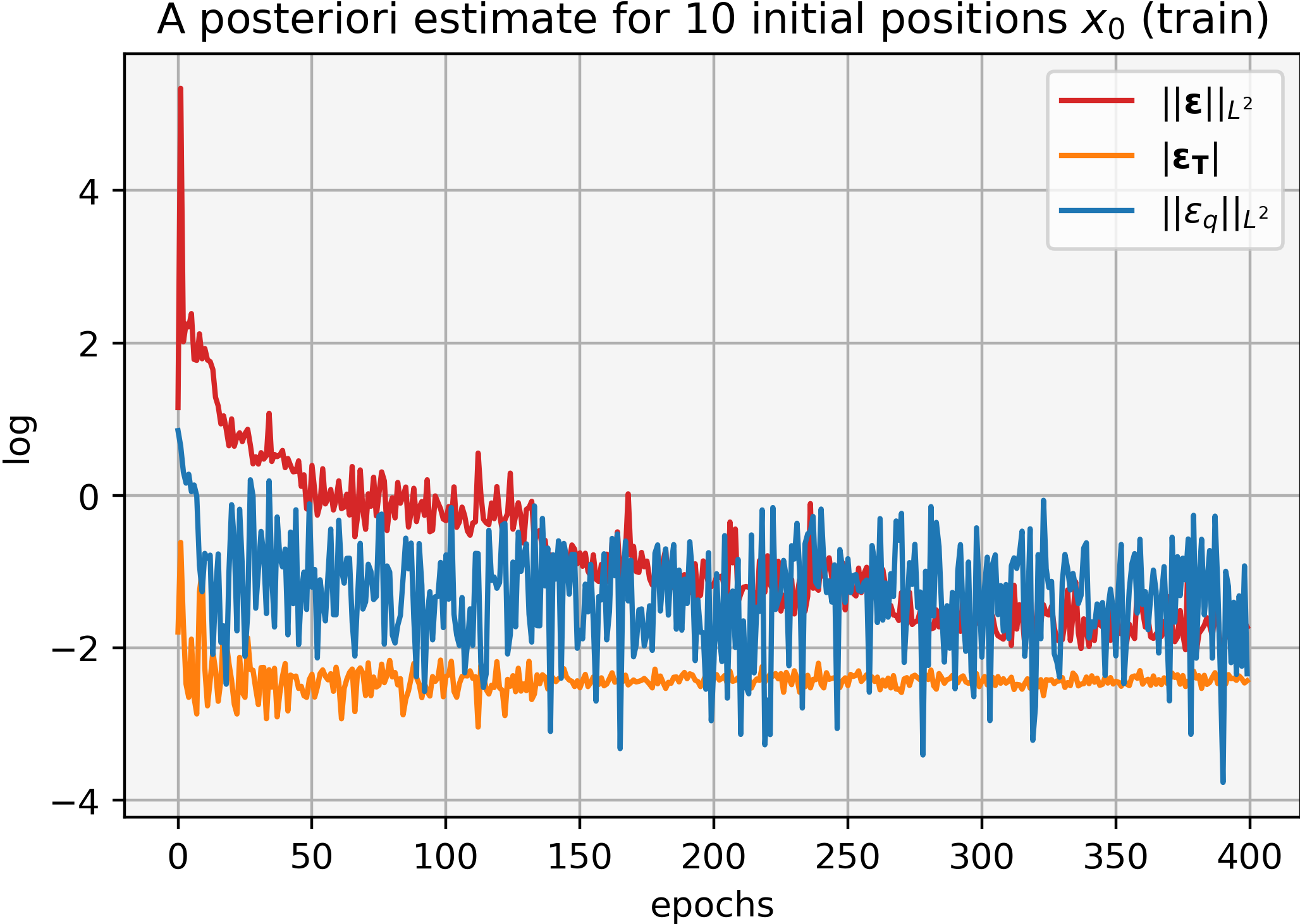}
          \end{subfigure}
     \hfill
     \begin{subfigure}[b]{0.48\textwidth}
     \centering
        \caption{Estimate vs. price error (training)}
         \includegraphics[width=\textwidth]{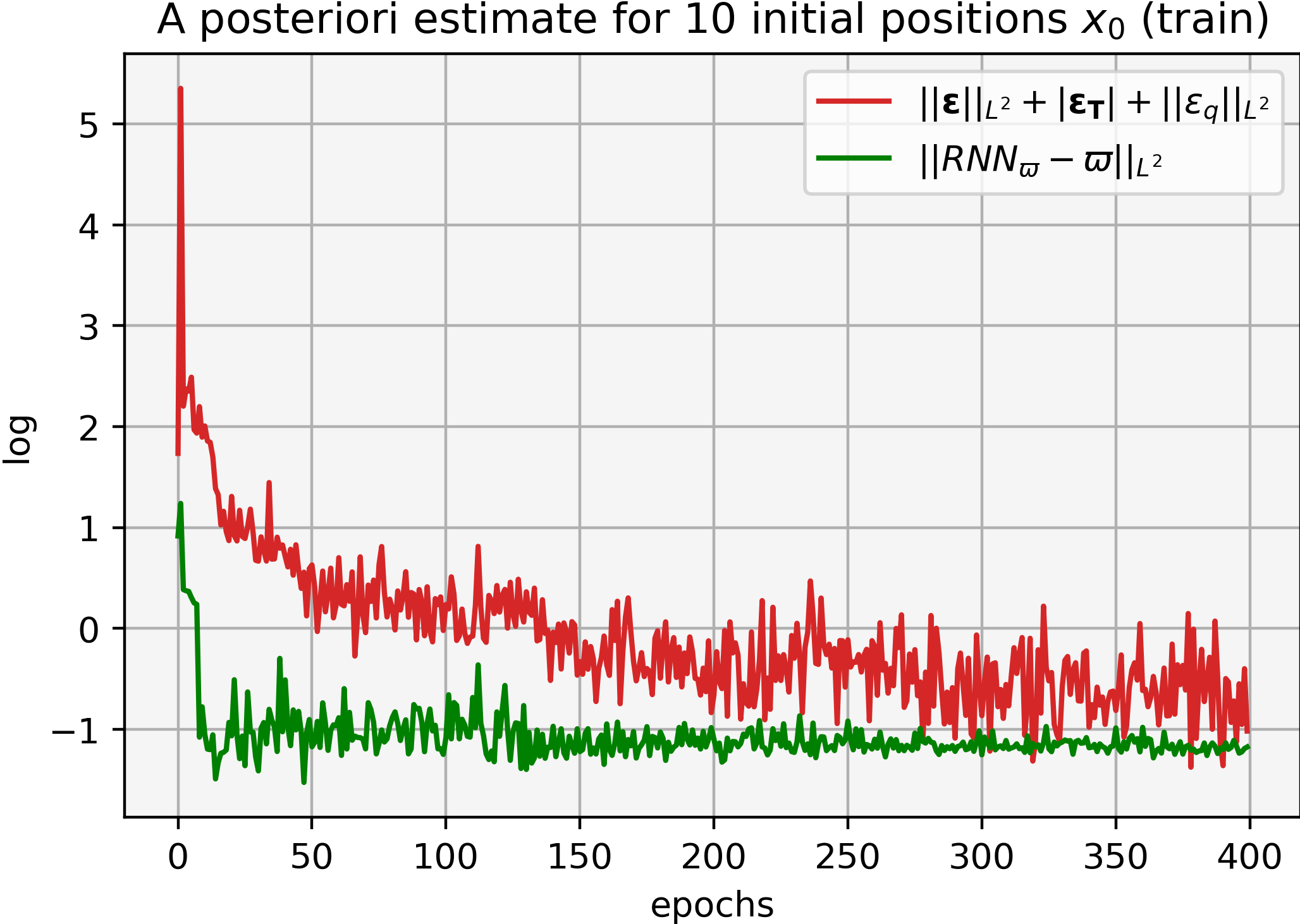}
          \end{subfigure}
%     \hfill          
%          \centering
%     \begin{subfigure}[b]{0.48\textwidth}
%     \centering
%        \caption{Residuals (test set)}
%         \includegraphics[width=\textwidth]{Plots_New/DetPrice_2RNN_NonQ/Test_EL_total_separate_log.png}
%          \end{subfigure}
%     \hfill
%     \begin{subfigure}[b]{0.48\textwidth}
%     \centering
%        \caption{Estimate vs. price error (test set)}
%         \includegraphics[width=\textwidth]{Plots_New/DetPrice_2RNN_NonQ/Test_EL_total_log.png}
%          \end{subfigure}
         \caption{Residuals (left), a posteriori estimate \eqref{eq:EL discrete a posteriori} and price error (right) during training (log. scale) using $\mathrm{MLP}_\varpi$ and $\mathrm{MLP}_v$ for $\overline{Q}(t)=7 e^{-t} \sin(3 \pi t)$ with non-quadratic cost.}
%         \label{fig:three sin x}
        \label{fig:RNN Qbar osc NonQ training}
\end{figure}

{\bf Comparison.} The $RNN$ architecture achieves earlier and more consistent decays than the $MLP$ in the a posterior estimate. However, no significant difference is observed in the price approximation.

%--------------CONCLUSIONS--------------------
%--------------CONCLUSIONS--------------------
%--------------CONCLUSIONS--------------------
%--------------CONCLUSIONS--------------------
%--------------CONCLUSIONS--------------------
\section{Conclusions and further directions}

We examined the implementation of ML techniques to approximate the solution of a MFG price formation model. We formulated the training algorithm using a saddle point approach, which relies on the characterization of price existence as a Lagrange multiplier corresponding to a balance constraint. Instead of using a convergence proof for the ML algorithms, we adopt an alternative approach by using a posteriori estimates to assess the convergence of the training process without requiring the exact solution to be known. These estimates rely on the Euler-Lagrange equation characterizing optimal trajectories, which uniquely correspond to the solution of the price formation model being approximated. 

For the linear-quadratic case explored in Section \ref{sec: Numerical results}, the dependence on $m_0$ is through its mean, as \eqref{eq:LQ price formula} shows. This dependence can be obtained by sampling the initial positions according to $m_0$, as in Algorithm \ref{alg:our algorithm}. This feature of the linear-quadratic structure allows for efficiently approximating $\varpi$ through sampling with no requirement on taking $N\to \infty$, as long as the mean of $m_0$ is well approximated during training. Both architectures provide accurate price approximations for the constant and oscillatory mean supplies. In particular, the architecture with two RNN seems to offer more room for improvement as more parameters and/or training steps are added. 

Nonetheless, we used the same parameters for the RNN approximating the optimal control $v^*$ for both supply scenarios (constant and oscillating mean-reverting function for the supply), we observe that the NN can capture the oscillatory behavior with no further requirement in the number of parameters, such as the number of layers and neurons. Therefore, the ML framework provides a reliable approximation capability for real scenarios, for instance, in the case of energy supply, whose demand is characterized by peak loads. Finally, the characteristic curves are well-defined up to the terminal time in the linear-quadratic model. It would be interesting to study the case of shocks, for instance, when players aggregate in a single position creating a singular distribution. 

Adding the supply history dependence in the RNN did not improve the approximation results. However, this architecture offers a history dependence that is required in the case of stochastic supply. As shown in \cite{GoGuRi2021} and \cite{gomes2021randomsupply}, when the supply is random, this introduces a noise that all players perceive; that is, the price formation model with common noise. In that case, a strategy to obtain a price is to use a SDE characterization, which requires the process to be progressively measurable. The RNN architecture meets this condition in the discrete-time approximation. We plan to pursue this line of research in future works.

In our method to solve the price formation MFG,  the optimal control $v^*$ and the price $\varpi$ play the role of primal and dual variables, respectively, as explained in Section \ref{sec:training algorithm}. For variational MFG, the PDE system can be seen as the optimality condition of a variational problem with the continuity equation as a constraint. Variational MFGs admit a dual formulation that has been exploited to propose numerical schemes (see \cite{MFGNumerical}, \cite{BenamouBrenierVarMFG}, and \cite{LauriereEtAlSurvey}). For instance, the Alternating Direction Method of Multipliers and the primal-dual method, as introduced in Section \ref{sec:training algorithm}, are used to solve variational MFGs; the main advantage is that convergence proofs are well understood. Alternatively, the primal-dual method can be implemented using NN, as we proposed in Section \ref{sec:NN for price}, where one NN plays the role of the primal variable and another NN plays the role of the dual variable.

%--------------RECYCLEBIN--------------------
%--------------RECYCLEBIN--------------------
%--------------RECYCLEBIN--------------------
%--------------RECYCLEBIN--------------------
%--------------RECYCLEBIN--------------------

%----------------------------------------------------------------------------------------------------------------------------------
%\bibliographystyle{plain}
\bibliographystyle{acm}
% Diogo
%\IfFileExists{"/Users/gomesd/mfgDGOFFICE.bib"}
%{\bibliography{/Users/gomesd/mfgDGOFFICE.bib}}

\bibliography{mfg_filtered.bib}
\end{document}